\begin{document}
	
	\bibliographystyle{plain}
	%	\begin{CJK*}{UTF8}{gbsn}
		
		\pagestyle{myheadings}
		%\markright{\sc xxx \textbf{x} (2001), \#Rxx\hfill}
		\thispagestyle{empty}
		\newtheorem{theorem}{Theorem}[section]
		\newtheorem{corollary}[theorem]{Corollary}
		\newtheorem{definition}{Definition}
		\newtheorem{guess}{Conjecture}
		\newtheorem{claim}[theorem]{Claim}
		\newtheorem{problem}{Problem}
		\newtheorem{question}{Question}
		\newtheorem{lemma}[theorem]{Lemma}
		\newtheorem{proposition}[theorem]{Proposition}
		\newtheorem{observation}[theorem]{Observation}
		\newenvironment{proof}{\noindent {\bf
				Proof.}}{\hfill\rule{3mm}{3mm}\par\medskip}
		\newcommand{\remark}{\medskip\par\noindent {\bf Remark.~~}}
		\newcommand{\ch}{{\rm ch}}
		\newcommand{\de}{\em}
		\newtheorem{example}{Example}
		\newcommand{\set}[1]{\{#1\}}
		\newcommand{\norm}[1]{{|#1|}}
		\newcommand{\NN}{\mathbb{N}}

		\newcommand{\GS}{G/\mathcal{S}}
		\newcommand{\LS}{L_{\mathcal{S}}}
		\newcommand{\BS}{B_{\mathcal{S}}}
		\newcommand{\XS}{X_{\mathcal{S}}}
		\newcommand{\YS}{Y_{\mathcal{S}}}
		\newcommand{\mS}{\mathcal{S}}
		\newcommand{\MS}{M_{\mathcal{S}}}
		
		\title{\bf Minimum non-chromatic-choosable graphs with given chromatic number}
		
		\author{Jialu Zhu\thanks{Department of Mathematics, Zhejiang Normal University, Email: jialuzhu@zjnu.edu.cn, }         \and
			Xuding Zhu\thanks{Department of Mathematics, Zhejiang Normal University, Email: xdzhu@zjnu.edu.cn, Grant numbers:   NSFC 12371359, U20A2068.}}
		
		\maketitle
		
		%\renewcommand{\baselinestretch}{1.5}
		%\linespread{1.5}
		
		\begin{abstract}
			A graph $G$ is called chromatic-choosable if $\chi(G)=ch(G)$. A natural problem   is to determine the minimum number of vertices in a  non-chromatic-choosable graph with given chromatic number. 	It was conjectured by Ohba, and proved by Noel, Reed and Wu  that $k$-chromatic graphs $G$ with $|V(G)| \le 2k+1$ are chromatic-choosable. This upper bound on $|V(G)|$ 
			is tight. It is known that if $k$ is even, then  $G=K_{3 \star (k/2+1), 1 \star (k/2-1)}$ and $G=K_{4, 2 \star (k-1)}$ are non-chromatic-choosable $k$-chromatic graphs with $|V(G)| =2 k+2$. 
			Some   subgraphs of these  two   graphs are also non-chromatic-choosable. 
			The main result of this paper is that all other $k$-chromatic  graphs $G$ with $|V(G)| =2 k+2$ are chromatic-choosable. In particular, if $\chi(G)$ is odd and $|V(G)| \le 2\chi(G)+2$, then $G$ is chromatic-choosable, which was conjectured by Noel. 
			%For $G=K_{3 \star (k/2+1), 1 \star (k/2-1)}$ and $G=K_{4, 2 \star (k-1)}$, a bad list assignment is a $k$-list assignment $L$ of $G$ for which $G$ is not $L$-colourable. Bad list assignments for   $G=K_{4, 2 \star (k-1)}$ were characterized by    Enomoto et al. We shall characterize bad list assignments for $G=K_{3 \star (k/2+1), 1 \star (k/2-1)}$ and determine all non-$k$-choosable subgraphs of these two graphs.
		\end{abstract}
		
		\noindent  {\bf Keywords:}  chromatic-choosable graphs, Ohba conjecture, Noel conjecture, near acceptable $L$-colouring, extremal graphs.
		
		\noindent  {\bf MSC number}: 05C15

		\section{Introduction}
  \label{sec-intro}
		
		A \emph{proper colouring} of a graph  $G$ is a mapping $\phi:V(G) \rightarrow \mathbb{N}$ such that $\phi(u) \neq \phi(v)$ for every edge $uv$ of $E(G)$. A {\em  $k$-colouring } of $G$ is a proper colouring of $G$ using colours from $[k] = \{1,2,\ldots, k\}$. We say $G$ is {\em $k$-colourable} if there is a $k$-colouring of $G$.  
		The {\em chromatic number } $\chi(G)$ of $G$ is the minimum $k$ such that $G$ is $k$-colourable. 
		%Graph colouring arises naturally in many practical situations such as scheduling, circuit designs, etc., and the study of graph colouring 

		List colouring is a natural generalization of classical graph colouring, introduced independently by Erd\H{o}s-Rubin-Taylor \cite{ERT1980} and Vizing \cite{Vizing76} in 1970's.
		A {\em list assignment } of  $G$ is a mapping $L$ which
		assigns to each vertex $v$ a set $L(v)$ of permissible colours. 
		An {\em
			$L$-colouring} of $G$ is a proper   colouring $\phi$  of $G$ with $\phi(v) \in L(v)$ for each vertex $v$. We say that $G$ is {\em
			$L$-colourable} if there exists an $L$-colouring of $G$, and $G$
		is {\em $k$-choosable} if $G$ is $L$-colourable for any list assignment $L$ of $G$ with $|L(v)| \ge k$ for each vertex $v$. More
		generally, for a function  $g: V(G) \to \NN$, we say $G$ is   {\em
			$g$-choosable} if $G$ is $L$-colourable for every list assignment  $L$ with $|L(v)| \ge g(v)$ for all $v \in V(G)$.  
		The {\em choice number} $\ch(G)$ of $G$ is
		the minimum $k$ for which $G$ is $k$-choosable.
		
		A $k$-colouring of a graph $G$ is a special case of list colouring, where each vertex $v$ has the same list $L(v)=\{1,2,\ldots, k\}$. So $k$-choosable implies $k$-colourable.  At first glance, one might expect the reverse inequality to hold as well. The smaller intersection between lists  would make it easier to assign distinct colours to adjacent vertices. However, the reverse inequality is far from true. It was observed in \cite{ERT1980} and \cite{Vizing76} that for any integer $k$,  there are bipartite graphs that are not $k$-choosable. So the difference $ch(G)-\chi(G)$ can be arbitrarily large.

		A graph $G$ is called {\em chromatic-choosable} if $\chi(G)=ch(G)$.  
		Chromatic-choosable graphs have been studied a lot in the literature, and are related to some other difficult problems. 
		For example, the famous Dinitz  problem (see e.g. \cite{Zeilberger}) asks the following question:
		
		“Given an $n \times n$ array of $n$-sets, is it always possible to choose one from each set, keeping the chosen elements distinct in every row, and distinct in every column?” 
		
		This problem can be equivalently stated as whether the line graph of $K_{n,n}$ is chromatic-choosable?  This problem was solved by Galvin \cite{Gal}, who  proved a  more general result:  the  line graph  of any bipartite multigraph is chromatic-choosable. On the other hand, Galvin's result is a special case of a more general conjecture - the list colouring conjecture: line graphs of all multigraphs are chromatic-choosable. The list colouring conjecture was posed independently by many diﬀerent
		researchers: Albertson and Collins, Bollob\'{a}s and Harris, 
		Gupta, and Vizing (see \cite{BH1985,HC1992,JT1995}). It has attracted a lot of attention and remains open in general. 
		
		%This conjecture remains open, and even stronger conjecture that claw-free graphs are chromatic-choosable was posed in  \cite{GM1998}. %total graphs \cite{BKW1997} and the square of   graphs \cite{KW}.  The conjecture that the square of graphs are chromatic-choosable was refuted by Kim and Park \cite{KP}. The other conjectures remain largely open. 
		%  Neverthless, some interesting special cases are verified. For example, the following famous Dinitz problem, which motivated the definition of choosability of graphs in \cite{ERT1980},  is solved:
		% 
		%  

		Ohba conjecture is another well-known conjecture about chromatic-choosable graphs.   It was proved in   \cite{Ohba2002}   that for any graph $G$,   $ch(G \vee K_n) = \chi(G \vee K_n)$ for   sufficiently large $n$, where $G \vee H$ is the join of $G$ and $H$, i.e., the graph obtained from the disjoint union of $G$ and $H$ by adding edges connecting every vertex of $G$ to every vertex of $H$.   This means that 
		graphs $G$ with $|V(G)|$ ``close'' to $\chi(G)$ are chromatic-choosable. A natural problem is how close should be $|V(G)|$ and $\chi(G)$ to ensure that $G$ be chromatic-choosable. Equivalently, what is the
		minimum number of vertices in a non-$k$-choosable $k$-chromatic graph? 
		
		%A partite set of a complete multipartite graph $G$ is called a {\em part} of $G$. 
		We denote by $K_{k_1\star n_1,k_2 \star n_2, \ldots, k_q \star n_q}$ the complete  multi-partite graph with $n_i$ parts of size  $k_i$, for $i=1,2,\ldots, q$.  If $n_j=1$, then the number $n_j$ is omitted from the notation.
		It was proved in \cite{EOOS2002} that if $k$ is an even integer, then $K_{4, 2 \star (k-1)}$ and $K_{3 \star (k/2+1), 1 \star (k/2-1)}$ are  not   $k$-choosable. These two graphs are $k$-chromatic graphs with $2k+2$ vertices. 
		Ohba \cite{Ohba2002} conjectured that for any positive integer $k$,   $k$-chromatic graphs with at most $2k+1$ vertices are $k$-choosable. This conjecture has attracted considerable attention, and many partial results were proved before it was finally confirmed by Noel, Reed and Wu \cite{NRW2015}.

		One approach has been to prove variants of Ohba's conjecture in which $|V(G)| \le 2k+1$  is replaced by  $|V(G)| \le f(\chi(G))$ for some function $f$ with $f(k) < 2k+1$.
		%that ensure  $G$ be chromatic-choosable.  
		Ohba \cite{Ohba2002} proved such a variant with $f(k)=k+\sqrt{k}$, and  Reed and Sudakov \cite{RS2005} improved the result to $f(k)= \frac 53 k - \frac 43$. By using a sophisticated  probabilistic method, Reed and Sudakov \cite{RS2002}  proved that Ohba's conjecture is asymptotically true:  if $|V(G)| \le (2-o(1))\chi(G)$, then $G$ is chromatic-choosable. 
		
		Another approach has been to show  the conjecture holds for special families of graphs. He, Li, Shen  and Zheng \cite{SHZL2009} proved Ohba's conjecture for graphs $G$ with independence number $\alpha(G) \le 3$, by extending a result of Ohba \cite{Ohba2004} who proved that if $|V(G)| \le 2\chi(G)$ and $\alpha(G) \le 3$, then $G$ is chromatic-choosable.  Kostochka, Stiebitz and Woodall \cite{KSW2011} improved this result and showed that Ohba’s Conjecture holds for graphs $G$ with $\alpha(G) \le 5$. Also Ohba's conjecture were verified for some particular complete multipartite graphs  in \cite{HZCSZ2008,SHZL2009,SHZWZ2008}.

		In 2015, Ohba's conjecture  was finally confirmed by Noel, Reed and Wu \cite{NRW2015}:  
		\begin{theorem}[Noel-Reed-Wu Theorem]  Every $k$-colourable  graph with at most $2k+1$ vertices is $k$-choosable.
		\end{theorem}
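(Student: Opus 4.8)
The plan is to reduce the statement to complete multipartite graphs and then analyse a minimal counterexample. First I would use that choosability is monotone under subgraphs: if $H \subseteq G$ and $G$ is $k$-choosable, then so is $H$. Given a $k$-colourable graph $G$ with $|V(G)| \le 2k+1$, fix a proper $k$-colouring with colour classes $V_1,\ldots,V_k$ and let $G'$ be the complete $k$-partite graph with these parts. Then $G \subseteq G'$, $\chi(G')=k$ and $|V(G')|=|V(G)|\le 2k+1$, so it suffices to prove the theorem for complete multipartite graphs. Proceeding by induction on $k$ (the case $k=1$ being trivial), suppose the theorem fails and choose a counterexample $G$ together with a bad $k$-list assignment $L$ (admitting no $L$-colouring), minimal first in $k$, then in $n=|V(G)|$, and finally in $\sum_{v}|L(v)|$. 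The feature of complete multipartite graphs I would exploit is that an $L$-colouring is precisely a partition of $V(G)$ into independent sets $I_c$, one per used colour $c$, with each $I_c$ contained in a single part $V_i$ and in the common colour-set of its vertices.

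Next I would extract the structural constraints forced on the minimal pair $(G,L)$. By minimality of $\sum_v|L(v)|$ we may assume $|L(v)|=k$ for every $v$ (otherwise delete superfluous colours, keeping $L$ bad). More importantly, no part $V_i$ with $|V_i|\ge 2$ can have a colour $c$ lying in $L(v)$ for all $v\in V_i$: otherwise colour all of $V_i$ with $c$, delete $c$ from the remaining lists and delete $V_i$; the result is a complete $(k-1)$-partite graph on at most $2(k-1)+1$ vertices with lists of size at least $k-1$, hence $(k-1)$-colourable by the induction hypothesis, and the colouring extends to $G$ — a contradiction. The same deletion idea rules out related \emph{dominated} configurations (a colour appearing in only one list; a vertex whose list is contained in that of a neighbour; and so on), so I may assume every colour is genuinely shared across at least two parts and that the lists are spread out. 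Counting incidences between vertices and colours under these constraints bounds the number of distinct colours and forces most parts to have size $2$ or $3$, which is exactly where the extremal graphs $K_{4,2\star(k-1)}$ and $K_{3\star(k/2+1),1\star(k/2-1)}$ on $2k+2$ vertices appear as the boundary configurations the argument must steer away from.

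The heart of the proof — and the step I expect to be the main obstacle — is to convert these local constraints into a global contradiction. The idea is to construct an $L$-colouring greedily, repeatedly choosing a colour $c$ and a part $V_i$, colouring a maximal admissible independent subset of $V_i$ with $c$, and passing to the reduced instance; a Hall-type (defect) argument shows that such a move exists unless a specific obstruction is present. One then argues, via a double-counting or weighting comparison between the total list length $kn$ and the number of usable (colour, part) incidences, that under the reductions (ii) and their relatives no obstruction can survive when $n\le 2k+1$: the constraints force $n\ge 2k+2$, contradicting $n\le 2k+1$.

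This last inequality is the delicate part, since it must simultaneously handle singleton parts and the dense case where all parts have size $2$. Obtaining a clean contradiction, rather than merely the conclusion that $n$ is large, is precisely what makes the sharp bound $2k+1$ (as opposed to the easier $(2-o(1))k$ obtainable by probabilistic splitting of the colours) hard to reach, and constitutes the technical core contributed by Noel, Reed and Wu.
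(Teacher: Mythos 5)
There is a genuine gap here, and you have in effect flagged it yourself. First, note that this statement is the Noel--Reed--Wu theorem (Ohba's conjecture): the paper does not prove it at all, but imports it from \cite{NRW2015} and uses it as a black box throughout. Your first two paragraphs are the standard and correct preamble --- reduction to complete $k$-partite graphs, a counterexample minimal in $k$, then in order, and the removal of a part whose lists share a common colour --- and indeed reductions of exactly this flavour appear in Section 3 of the present paper (Observation \ref{obs-0}, Lemma \ref{clm-nocommoncolour}), where they are \emph{derived from} the Noel--Reed--Wu theorem rather than used to prove it. (One side claim is also unjustified: deleting ``a vertex whose list is contained in that of a neighbour'' is not a valid reduction, since after colouring the rest there is no guarantee a colour remains for the deleted vertex; a vertex of a complete $k$-partite graph can have up to $2k$ neighbours but only $k$ list colours.) The fatal problem is your third paragraph: the assertion that ``a Hall-type (defect) argument shows that such a move exists unless a specific obstruction is present'' and that ``a double-counting or weighting comparison'' between $kn$ and the usable (colour, part) incidences rules out all obstructions when $n\le 2k+1$ is not an argument but a placeholder for the entire theorem. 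No such incidence count is known to close the proof; greedy plus straightforward counting is precisely what earlier partial results (e.g.\ the Reed--Sudakov $(2-o(1))\chi$ bound \cite{RS2005}, or the bounded-part-size cases \cite{KMZ2014,Ohba2004}) could extract, and these stop well short of the sharp bound $2k+1$.

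What actually closes the gap in \cite{NRW2015} --- and what the present paper adapts in its Sections 5--9 --- is a different mechanism, which your sketch does not contain even in outline. One defines \emph{near-acceptable} $L$-colourings, in which a few vertices may be badly coloured provided their colours are \emph{frequent} (appearing in sufficiently many lists, or in the lists of many singleton parts); one proves, via Hall's condition applied to the bipartite graph between colour classes and colours (the analogue of $B_f$, $X_f$, $Y_f$ here, cf.\ Lemma \ref{lem-near}), that the existence of a near-acceptable $L$-colouring already forces a proper $L$-colouring; and one then constructs a near-acceptable colouring by a multi-stage procedure --- first colour greedily with non-frequent colours optimizing a lexicographic potential, then assign frequent colours to the residue --- supported by extensive case analysis (the analogue of Sections \ref{sec-kfrequent} and \ref{sec-fewer} here). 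That construction, not the minimal-counterexample setup, is the content of the theorem. Since you explicitly defer this step to ``the technical core contributed by Noel, Reed and Wu,'' your proposal is a correct framing of the problem and a correct account of why it is hard, but it is not a proof.
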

		
		Nevertheless, this is not the end of the story. More problems related to Ohba's conjecture are posed and studied. One problem is what would be the choice number of $k$-chromatic graphs $G$ with $|V(G)|$ slightly bigger than $2k+1$. This question was addressed in \cite{NWWZ2015}. Another related problem is the online version of Ohba's conjecture, which was posed in \cite{HWZ}, and has been studied in a few papers \cite{CCGH,KKLZ,KMZ2014}. Some partial cases are verified and the conjecture  remains  open in general.

		This paper explores the tightness of Ohba's conjecture.  Although Ohba's conjecture is tight,  $K_{4, 2\star (k-1)}$ and $K_{3 \star (k/2+1), 1 \star (k/2-1)}$ for even $k$ are the only known $k$-chormatic graphs with $2k+2$ vertices that are not $k$-choosable.  In particular, Ohba's conjecture was not known to be tight for odd integer $k$.
		
		Noel \cite{Noel2013} conjectured if $k$ is odd, then all $k$-chromatic graphs with $2k+2$ vertices are $k$-choosable.

		%	We observe that some   subgraphs of  $K_{4, 2\star (k-1)}$ and 	$K_{3 \star (k/2+1), 1 \star (k/2-1)}$ are also not $k$-choosable. 
		
		Observe that for a $k$-chromatic graph $G$, by adding edges between vertices of distinct colour classes, the resulting graph has the same chromatic number, and whose choice number is not decreased. Therefore in the study of minimum non-chromatic choosable graphs, it suffices to consider complete multipartite graphs. 
		
		The main result of this paper is that  $K_{4, 2\star (k-1)}$ and $K_{3 \star (k/2+1), 1 \star (k/2-1)}$  for even $k$ are the only non-$k$-choosable complete $k$-partite graphs with $2k+2$ vertices.

		\begin{theorem}
			\label{thm-main}
			Assume $G=(V,E)$ is a complete $k$-partite graph with  $|V| \le 2k+2$, and $G \ne K_{4, 2\star (k-1)},  K_{3\star (k/2+1), 1 \star (k/2 -1)}$ when $k$ is even, and $L$ is a $k$-list assignment of $G$. Then $G$ is $L$-colourable.  
		\end{theorem}

		As a consequence,   Noel's conjecture
		is confirmed: 
		
		\begin{corollary}%[Noel's Conjecture]
			\label{cor1}
			If $k$ is odd, then every $k$-chromatic graph with at most $2k+2$ vertices is chromatic-choosable. 
		\end{corollary}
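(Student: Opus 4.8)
Proof proposal for the main result underlying Corollary~\ref{cor1}.

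\medskip

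Corollary~\ref{cor1} is immediate from Theorem~\ref{thm-main}, so I will aim at the theorem. Indeed, if $G$ is $k$-chromatic with $|V(G)| \le 2k+2$, I fix an optimal proper colouring and let $H$ be the complete $k$-partite graph obtained by completing its $k$ colour classes into parts. Then $G$ is a spanning subgraph of $H$, $|V(H)| = |V(G)| \le 2k+2$, and $\chi(H)=k$. Since the choice number is monotone under taking subgraphs, $\ch(G)\le \ch(H)$, and for odd $k$ the graph $H$ can be neither $K_{4,2\star(k-1)}$ nor $K_{3\star(k/2+1),1\star(k/2-1)}$; hence Theorem~\ref{thm-main} gives $\ch(H)=k$, whence $\ch(G)=k$. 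So the whole task reduces to Theorem~\ref{thm-main}, and I sketch a plan for it.

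By the theorem of Noel, Reed and Wu, every complete $k$-partite graph with at most $2k+1$ vertices is $k$-choosable, so I may assume $|V| = 2k+2$ exactly. Write the parts as $V_1, \dots, V_k$ with sizes $n_1 \ge \dots \ge n_k$, so that $\sum_i (n_i-1) = k+2$. Suppose for contradiction that $G$ is neither exceptional graph yet admits a $k$-list assignment $L$ with no $L$-colouring; I may assume $|L(v)|=k$ for every $v$. The key reformulation is that, because the independent sets of a complete multipartite graph are exactly the subsets of single parts, an $L$-colouring is the same as a choice of pairwise disjoint colour sets $C_1, \dots, C_k$ with $L(v)\cap C_i \neq \emptyset$ for every $v \in V_i$; so non-$L$-colourability says no such disjoint system of ``covers'' exists.

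The plan is then to reduce the profile $(n_1,\dots,n_k)$ to a short list of candidates and rule out all but the two exceptions. First I apply standard simplifications: by a standard lemma (cf.\ \cite{Ohba2002}), if $|\bigcup_v L(v)| \ge |V| = 2k+2$ the graph is $L$-colourable, so the palette satisfies $|\bigcup_v L(v)| \le 2k+1$, forcing heavy overlap among the size-$k$ lists; and if some part has a colour lying in every one of its lists, that colour can be dedicated to the part and the part deleted, reducing to a smaller instance. These reductions, together with the budget $\sum_i (n_i-1)=k+2$, confine the analysis, and large parts or profiles with many medium parts are disposed of directly, typically because they force the palette to be large or admit an explicit disjoint cover. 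The heart of the argument is an induction that peels off a part of size $1$ or $2$ and redistributes colours, chosen so that $k$ and $|V|$ decrease in step and the extremal regime is preserved; for every profile other than $(4,2,\dots,2)$ and $(3,\dots,3,1,\dots,1)$ this produces the required disjoint covers. For the two borderline profiles I analyse the bipartite incidence between colours and parts directly: the existence of the disjoint covers reduces to a matching/parity condition on this incidence that is satisfiable exactly when $k$ is odd, which is precisely the point at which the non-$k$-choosability construction of \cite{EOOS2002} for even $k$ fails.

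The main obstacle, I expect, is the tight extremal analysis near the two exceptional profiles, and in particular the handling of parts of size $1$ (universal vertices): deleting such a part lowers $k$ by one but the vertex count by only one, so the quantity $|V|-2k$ worsens and a naive induction breaks. Controlling this requires a more global argument on the colour--part incidence rather than vertex-by-vertex deletion, and it is exactly here that the parity of $k$ must be exploited to separate the choosable profiles from the genuine obstructions.
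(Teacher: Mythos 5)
Your derivation of Corollary~\ref{cor1} from Theorem~\ref{thm-main} is correct and is exactly the paper's (implicit) proof: complete an optimal $k$-colouring of $G$ to a spanning complete $k$-partite supergraph $H$ with $|V(H)|\le 2k+2$, use monotonicity of the choice number under subgraphs, and observe that both exceptional graphs require $k$ even, so Theorem~\ref{thm-main} gives $ch(H)=k$ and hence $ch(G)=k$. The appended sketch of Theorem~\ref{thm-main} itself is not needed for this statement and is only a plan --- the paper's actual proof runs through pseudo-$L$-colourings, near-acceptable colourings and frequent colours rather than the parity-based matching condition you propose --- but since the theorem is established in the paper, your proof of the corollary stands.
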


		For a positive integer $k$, let 
		$$\beta(k) = \min\{|V(G)|: \chi(G)=k < ch(G)\}.$$
		
		For an odd integer $k$, it can be checked that $K_{5,2\star(k-1)}$ is not $k$-choosable. 
		Thus we have the following corollary.
		
		\begin{corollary}
			\label{cor2}
			For the function $\beta$ defined above, 
			\[
			\beta(k)=\begin{cases} 2k+2, &\text{ if $k$ is even}, \cr
				2k+3, &\text{ if $k$ is odd}.
			\end{cases}
			\]
		\end{corollary}

		Here is a brief outline of the proof of Theorem \ref{thm-main}.
		
		Assume $G$ is a complete $k$-partite graph with $2k+2$ vertices, $G \ne K_{4, 2 \star (k-1)}, K_{3 \star (k+1)/2, 1 \star (k-1)/2}$ when $k$ is even, and $L$ is a $k$-list assignment of $G$.
		Let $C_L= \bigcup_{v \in V}L(v)$. 
		The first step is to  construct a family  $\mathcal{S}$  of independent sets that form a partition of $V(G)$.
		Let $G/\mathcal{S}$ be the graph obtained from $G$ by identifying each independent set $S \in \mathcal{S}$ into a single vertex $v_S$.
		Let $L_{\mathcal{S}}$ be the list assignment of $G/\mathcal{S}$ defined as $L_{\mathcal{S}}(v_S) = \bigcap_{ u \in S}L(u)$.
		Build a bipartite graph $B_{\mathcal{S}}$ with partite sets $V(G/\mathcal{S})$ and $C_L$, with $\{v_S, c\}$ be an edge if $c \in L_{\mathcal{S}}(v_S)$. If $B_{\mathcal{S}}$ has a matching $M$ that covers $V(G/\mathcal{S})$, then $M$ defines an $L$-colouring of $G$, with each $S \in \mathcal{S}$ be coloured  with the colour matched to $v_S$ in $M$. 
		
		Assume that there is no such a matching $M$, and hence by Hall's Theorem, there exists a subset $X_{\mathcal{S}}$ of $V(G/\mathcal{S})$ such that $|Y_{\mathcal{S}}| < |X_{\mathcal{S}}|$, where $Y_{\mathcal{S}} = N_{B_{\mathcal{S}}}(X_{\mathcal{S}})$.  
		By analysing the lists $L(v)$ and independent sets $S$ in $\mathcal{S}$, 
		the inequality $|Y_{\mathcal{S}}| < |X_{\mathcal{S}}|$ may lead to a series of inequalities and eventually  lead to a contradiction (which means that no such $X_{\mathcal{S}}$ exists and hence the desired matching $M$ exists).
		
		Assume no contradiction is derived, and hence $X_{\mathcal{S}}$ and $Y_{\mathcal{S}}$ do exist. 
		We choose $\XS$ so that $|\XS|-|\YS|$ is maximum. 
		By Hall's Theorem, this implies that there is a matching $M'$ in $\BS-(\XS \cup \YS)$ that covers $V(G/\mathcal{S}) - \XS$.
		
		\begin{definition}
			\label{def-partial}
			A {\em partial $L$-colouring} of $G$ is an $L$-colouring of an induced subgraph $G[X]$ of $G$. Given an $L$-colouring $\phi$ of $G[X]$, 
			$L^{\phi}$ is the list assignment of $G-X$ defined as $L^{\phi}(v) = L(v) - \phi(N_G(v) \cap X)$ for $v \in V(G-X)$. An  $L$-colouring $\phi$ of $G[X]$
			is a {\em good partial $L$-colouring} of $G$ if the pair $(G-X, L^{\phi})$ satisfies the condition of Theorem \ref{thm-main}.
		\end{definition} 
		
		The matching  $M'$ constructed above  defines a {\em partial $L$-colouring $\psi$ of $G$} that colours vertices in $\bigcup_{S \in V(\GS)-\XS}S$. 
		One nice property of this partial colouring $\psi$ is that if $\{v\} \in \XS$ is a singleton part of $\mathcal{S}$, then $L^{\psi}(v) = L(v)$ (as $L(v) \subseteq \YS$). In other words some neighbours of $v$ may have been coloured, and yet $v$  still has the same set of permissible colours. 
		
		By using this property, we want to  extend  $\psi$ to a good partial $L$-colouring $\phi$ of $G$, that colours a subset $X$ of $G$. If this can be done, then $G-X$ has an $L^{\phi}$-colouring $\theta$, and the union $\phi \cup \theta$ would be an $L$-colouring of $G$. 
		
		For the plan above to work, the choice of the 
		partition $\mathcal{S}$ of $V(G)$ in the first step is crucial. Indeed, Theorem \ref{thm-main} is equivalent to saying that there is a choice of $\mathcal{S}$ such that $B_{\mathcal{S}}$ has a matching $M$ that covers $V(G/\mathcal{S})$. 
		We usually start with a proper colouring $f$ of $G$, which is not necessarily an $L$-colouring, but ``close'' to an $L$-colouring, and let $\mathcal{S}$ be the colour classes of $f$.  The concept of ``near acceptable'' $L$-colouring is defined to capture the required properties needed for the plan above to work.  Near acceptable $L$-colouring was first used in \cite{NRW2015}. The definitions of near acceptable $L$-colourings for the proofs of Noel-Reed-Wu Theorem and Theorem \ref{thm-main} are slightly different. The slight difference makes it more difficult to construct  a near acceptable $L$-colouring of $G$ for the proof of Theorem \ref{thm-main}, while the proof of Noel-Reed-Wu Theorem is already complicated. For the proof of Theorem \ref{thm-main}, before constructing a near acceptable $L$-colouring of $G$, a pseudo-$L$-colouring of $G$ is constructed as an intermediate step. In many cases, we need to repeatedly modify a pseudo $L$-colouring until we obtain a near acceptable $L$-colouring.
		%The process involves proving some inequalities. 
		%So roughly speaking, for the proof of Theorem \ref{thm-main}, we find a proper $L$-colouring of $G$ in three steps: first find a pseudo-$L$-colouring, then a near acceptable $L$-colouring, and finally an $L$-colouring. 
		%As Noel-Reed-Wu Theorem is tight for even $k$, many inequalities in the proof of Noel-Reed-Wu Theorem  are  tight. For the proof of Theorem \ref{thm-main}, we need to prove stronger inequalities by excluding the two extremal graphs $K_{4, 2\star (k-1)}$ and $K_{3 \star (k/2+1), 1 \star (k/2-1)}$ for even $k$. 

		In Section 2, we prove a sufficient condition for a complete multipartite graph $G$ with all parts of size at most 3 to be $g$-choosable for a given function $g:V(G) \to \mathbb{N}$. This will be used in later proofs. In Section3, we fix some notation and present some basic properties of a minimum counterexample. 
		In Section 4, we prove Theorem \ref{thm-main} for complete $k$-partite graphs with most parts of size at most 3. These graphs are special as there is little difference between these graphs and the critical graphs   $K_{4, 2 \star (k-1)}$ and $K_{3 \star (k/2+1), 1 (k/2-1)}$ (for even $k$).
		In Section 5, we introduce the concept of pseudo-$L$-colouring of $G$ and prove some properties of such colourings. 
		In Section 6, we  define the concept of near-acceptable $L$-colouring
		%(which is slightly different from the one defined in \cite{NRW2015}) 
		and show that the existence of a near-acceptable $L$-colouring of $G$ implies the existence of a proper $L$-colouring of $G$. 
		Some sufficient conditions for the existence of  near-acceptable $L$-colourings of $G$ are presented in Sections 7
		and 8. A final contradiction is derived in Section 9.

		%%%%%%%%%%%%%%%%%%%%%%%%%%%%%%%%%%%%%%%%%%%%%

		\section{Graphs with all parts of sizes at most 3}
		
		This section proves the following lemma, which  gives a sufficient condition for $g: V(G) \to \mathbb{N}$, so that $G$ is $g$-choosable when all parts of $G$ have size at most 3. This lemma is analog to Lemma 5 in \cite{KMZ2014}, where a sufficient condition for $G$ to be on-line $g$-choosable was given. The sufficient condition below is almost the same as that in Lemma 5 of \cite{KMZ2014}, except that for two vertices $u,v$ in a 3-part of $G$,  the upper bounds for the sum $g(u)+g(v)$ in the two lemmas   are different, and which is needed in later applications. 
		%The  proof of this lemma is almost the same as the proof of Lemma 5 in \cite{KMZ2014}.

		\begin{lemma}
			\label{ind3}
			Let $G$ be a complete multipartite graph with parts of size at most $3$. Let  $\mathcal{A},$   $\mathcal{B}$, $\mathcal{C}$, $\mathcal{D}$ be a partition of the   parts of $G$ into classes  such that $\mathcal{A}$ and $\mathcal{D}$ contain only parts of size $1$,   $\mathcal{B}$ contains all parts of size $2$  and $\mathcal{C}$ contains all parts of size $3$. Let $k_1, k_2, k_3, d$ denote the cardinalities of classes $\mathcal{A}$,  $\mathcal{B}$, $\mathcal{C}$,  $\mathcal{D}$ respectively. Suppose that classes $\mathcal{A}$ and $\mathcal{D}$ are ordered, i.e.\ $\mathcal{A}= (A_1, \ldots, A_{k_1})$
			and $\mathcal{D}=(D_1, \ldots, D_d)$. If $f:V(G)\to \NN$ is a function for which the following   hold:
			\begin{alignat}{2}
				g(v)&\geq k_2+k_3+i, &&\quad\text{for all $1\leq i \leq k_1$ and $v\in A_i$}\tag{a-1}\label{invK:a-1}\\
				g(v)&\geq k_2+k_3, &&\quad\text{for all $v\in B \in\mathcal{B}$}\tag{b-1}\label{invK:b-1}\\
				g(u)+g(v)&\geq 3k_3+2k_2+k_1+d, &&\quad\text{for all $u,v\in B\in\mathcal{B}$}\tag{b-2}\label{invK:b-2}\\
				g(v)&\geq k_2+k_3, &&\quad\text{for all $v\in C\in\mathcal{C}$}\tag{c-1}\label{invK:c-1}\\
				g(u)+g(v)&\geq 2 k_3 + 2k_2+ k_1, &&\quad\text{for all $u,v\in C\in\mathcal{C}$}\tag{c-2}\label{invK:c-2}\\
				\sum_{v\in C}g(v)&\geq 4 k_3+ 3  k_2+2k_1+d-1, &&\quad\text{for all $C\in\mathcal{C}$} \tag{c-3}\label{invK:c-3}\\
				g(v)&\geq 2 k_3+k_2+k_1+i, &&\quad\text{for all $1\leq i \leq d$ and $v\in D_i$}\tag{d-1}\label{invK:s-1}
			\end{alignat}
			then $G$ is $g$-choosable.	
		\end{lemma}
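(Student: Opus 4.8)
The plan is to prove the lemma by induction on the number of parts $k_1+d+k_2+k_3$, with the total list size $\sum_v f(v)$ as a secondary measure. First I would record the reformulation that makes complete multipartite graphs tractable: a proper $L$-colouring is exactly an assignment of a nonempty palette to each part so that the palettes are pairwise disjoint and each part can be coloured from its own palette, where a singleton spends one colour, a $\mathcal{B}$-part one or two, and a $\mathcal{C}$-part one, two, or three. The seven hypotheses are calibrated for a greedy colouring in the order $\mathcal{C},\mathcal{B},\mathcal{A},\mathcal{D}$ in which every $\mathcal{B}$-part and every $\mathcal{A}$- and $\mathcal{D}$-singleton receives one colour, while a $\mathcal{C}$-part uses as few colours as its lists permit. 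The key observation is the asymmetry between (a-1) and (d-1): the bound for $A_i$ is tuned to the optimistic case in which every $\mathcal{C}$-part is monochromatic (only $k_3$ colours spent on $\mathcal{C}$), whereas the bound for $D_i$ absorbs the pessimistic case in which each $\mathcal{C}$-part costs two colours ($2k_3$). The $\mathcal{D}$-singletons are thus the ``robust'' end of the instance and the $\mathcal{A}$-singletons the ``sensitive'' end.

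The base case is $k_2=k_3=0$, where $G$ is the complete graph on the singletons; ordering its vertices as $A_1,\dots,A_{k_1},D_1,\dots,D_d$, conditions (a-1) and (d-1) give $f(v_j)\ge j$ for the $j$th vertex, so the greedy colouring succeeds. For the inductive step I would peel parts off the robust end (reversing the colouring order). To remove $D_d$ I colour it with some $c\in L(D_d)$ and pass to the instance with $d'=d-1$. Only (b-2) and (c-3) mention $d$, and each relaxes by exactly $1$ when $d$ drops, which pays for deleting $c$ from a single $\mathcal{B}$- or $\mathcal{C}$-list; every other hypothesis is unchanged. Hence I must choose $c$ to avoid every list whose per-vertex bound (a-1), (b-1), (c-1), or (d-1) for a surviving $D_i$ is currently tight, and to meet each $\mathcal{B}$- and $\mathcal{C}$-part in at most one vertex, so that the single unit of slack in (b-2)/(c-3) suffices. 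Since $|L(D_d)|\ge 2k_3+k_2+k_1+d$, a counting argument bounds the forbidden colours below $|L(D_d)|$ and yields a legal $c$; the same scheme, now drawing on the slack that (b-2), (c-2), and (c-3) acquire as $k_1$ drops, removes the $\mathcal{A}$-singletons and reduces to $d=k_1=0$.

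This leaves the core case of only size-$2$ and size-$3$ parts, which is essentially Lemma 11 of \cite{KMZ2014}; there each $\mathcal{B}$-part is made monochromatic whenever its two lists meet, and the genuine difficulty is the class $\mathcal{C}$. For a size-$3$ part $C$ I must decide whether to spend one, two, or three colours on it, and which, in coordination with the deletions this forces on the remaining parts, so that all of (c-1), (c-2), (c-3), (b-1), and (b-2) survive for the smaller instance. The three coupled $\mathcal{C}$-conditions read exactly as a Hall/SDR feasibility statement: (c-1) guards each vertex, (c-2) guards each pair and underwrites the two-colour option, and (c-3) guards the whole triple and underwrites the one-colour option. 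The argument must show that at least one of these options can be realised by a colour that avoids the other tight lists.

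The main obstacle is therefore the size-$3$ parts together with the adaptive one-versus-two-colour bookkeeping. I expect the bulk of the work to be a case analysis organised by which of the seven inequalities are tight, showing that whenever no single non-damaging colour exists there is a structural reason — many tight lists forced to share their colours — that instead produces a direct colouring or an alternative reduction. Verifying that the \emph{full} system of seven inequalities is restored after each reduction, rather than colouring $G$ in one pass, is precisely what makes the induction close, and is where the careful counting replaces the routine calculations I am omitting here.
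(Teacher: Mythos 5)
Your proposal is a strategy outline rather than a proof, and both of its pivotal steps contain genuine gaps. First, the singleton-peeling step is done in the wrong direction and rests on an unsubstantiated counting claim. You remove $D_d$, the largest-index singleton, so the surviving $D_i$ keep their indices and their bounds in \eqref{invK:s-1} do not relax; the colour $c$ given to $D_d$ must then avoid every tight list under \eqref{invK:a-1}, \eqref{invK:b-1}, \eqref{invK:c-1}, \eqref{invK:s-1}, every colour lying in both lists of a \eqref{invK:b-2}-tight $\mathcal{B}$-part, and every colour lying in two lists of a $\mathcal{C}$-part whose \eqref{invK:c-2} or \eqref{invK:c-3} is tight. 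Your assertion that ``a counting argument bounds the forbidden colours below $|L(D_d)|$'' is not proved and is false as stated: within a part nothing bounds $|L(u)\cap L(v)|$ except $\min(f(u),f(v))$, which can vastly exceed $|L(D_d)|\ge 2k_3+k_2+k_1+d$, so the forbidden set admits no a priori bound relative to $L(D_d)$. The paper escapes this trap by reversing the order: it first proves (Claim \ref{clm-1}) that no colour lies in all lists of a $\mathcal{B}$- or $\mathcal{C}$-part and establishes the tightness claims (tight \eqref{invK:c-2} forces $L(u)\cap L(v)=\emptyset$ by Claim \ref{clm-3}; \eqref{invK:b-1} and \eqref{invK:c-1} are never tight by Claims \ref{clm-4} and \ref{clm-5}), and only then deletes $A_1$ and, after $k_3=0$, $D_1$ --- the \emph{smallest}-index singletons --- so that $D_{i+1}$ becomes $D'_i$ and the index shift absorbs the loss of an \emph{arbitrary} colour, with no avoidance argument needed.

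Second, your decomposition does not actually decouple. In your ``core case'' $k_1=d=0$, any reduction of a $3$-part --- colouring two of its vertices with a common colour, or colouring one vertex and handling the other two --- leaves behind vertices that are singletons of the smaller instance, so $k_1$ or $d$ becomes nonzero again: the singletons you eliminated at the outset regenerate immediately. This is precisely why the lemma carries the two ordered singleton classes with different scales of bounds, and why the paper's reductions specify \emph{where} leftover vertices are inserted: the third vertex of a \eqref{invK:c-2}-tight $3$-part goes to the \emph{front} of $\mathcal{D}$ (its list is large by \eqref{invK:c-3}, Claim \ref{clm-3}), the leftover of Claim \ref{clm-4} is \emph{appended} to $\mathcal{A}$, and the leftover of a $2$-part is appended to $\mathcal{D}$ (Claim \ref{clm-5}). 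Your proposal also lacks the identification trick of Claim \ref{clm-4}, Case 2, where $u$ and $w$ are merged into one vertex with list $L(u)\cap L(w)$ of size at least $k_3+k_2+k_1$ when $L(v)$ is disjoint from $L(u)\cup L(w)$; without it the case of a tight \eqref{invK:c-1} vertex with an isolated list cannot be closed. Your final paragraph explicitly defers all of this (``the bulk of the work''), so the essential content of the lemma is missing from the proposal.
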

		
		\begin{proof}
			Assume the parts of $G$ are partitioned into $\mathcal{A}$, $\mathcal{B}$,  $\mathcal{C}$,   $\mathcal{D}$   and  $g$ is a function satisfying the inequalities \eqref{invK:a-1}-\eqref{invK:s-1}, and $L$ is a list assignment with   $\norm{L(v)}= g(v)$.  We shall colour an independent set $S$ of $G$ with a 
			colour $c \in \bigcap_{v \in S}L(v)$. Let $G'=G-S$ and $L'$ be the list assignment of $G'$ defined as $L'(x) = L(x) -\{c\}$ for $x \in V(G')$ and $g'(v)=|L'(v)|$.
			We shall verify that the pair $(G', f')$ satisfies the condition of Lemma \ref{ind3}, and hence $G'$ is $L'$-colourable by induction hypothesis (if $|V(G)|=1$, then the result is trivial). 
			Together with the colouring of $S$ with colour $c$, we obtain an $L$-colouring of $G$. 
			
			In the following, we describe the choice of the independent set $S$. The colour $c$ is always an arbitrary colour in  $\bigcap_{v\in S}L(v)$.
			We describe briefly how to verify the fact that $(G', g')$ satisfies the condition of Lemma \ref{ind3} (the proof of Lemma 5 of \cite{KMZ2014} is similar, and contains more detailed explanations).  The partition $\mathcal{A}'$, $\mathcal{B}'$,  $\mathcal{C}'$,   $\mathcal{D}'$ of the parts of $G'$ and the ordering of parts in $\mathcal{A}'$ and $\mathcal{D}'$ are inherited from the partition and the ordering of the parts of $G$, except that one part may have some vertices coloured and remaining vertices form a part in another class.  When a part from $\mathcal{B}$ or $\mathcal{C}$ has some vertices coloured and the remaining vertex form a part in $\mathcal{A}'$ or $\mathcal{D}'$, we also need to put it in a correct order. Denote by $k'_1,k'_2,k'_3, d'$ the cardinalities of $\mathcal{A}'$, $\mathcal{B}'$,  $\mathcal{C}'$,   $\mathcal{D}'$, respectively. To verify the inequalities, it suffices to show that with $g$ replaced by $g'$,  $k_i$ replaced by $k'_i$ and $d$ replaced by $d'$, the amount reduced on the left hand side is no more than the amount reduced on the right hand side. 
			
			The choice of $S$ is determined in 8 cases.  For $ 2 \le i \le 8$, Case $i$ is considered only if all cases $j$ with $j \le i-1$ do not apply.

			\begin{enumerate}
				\item If there exists $C \in \mathcal{B} \cup \mathcal{C}$ for which  $\bigcap_{v\in C}L(v) \ne \emptyset$, then $S=C$. \\
				{\em Verification: For (a-1),(b-1),(c-1), (d-1), the left hand side is reduced by at most 1 (i.e., $g'(v) \ge g(v)-1$), and the right hand side is reduced by at least 1. (For example, consider (a-1):  $k'_2+k'_3+i = k_2+k_3+i - 1$).  For (b-2), (c-2),  the left hand side is reduced by at most 2 (i.e., $g'(u)+g'(v) \ge g(u)+g(v)-2$), and the right hand side is reduced by at least 2. For (c-3),  the left hand side is reduced by at most 3 (i.e., $\sum_{v \in C}g'(v) \ge \sum_{v \in C}g(v)-3$), and the right hand side is reduced by at least 3.}
				
				\item If there exist $ C=\{u,v, w\} \in\mathcal{C}$ with $g(u)+g(v)= 2 k_3 +2k_2+ k_1$, and  $L(u)\cap L(v) \ne  \emptyset$, then $S=\{u,v\}$. \\
				{\em Verification: The part $\{w\}$ of $G'$ is the last member of $\mathcal{D}'$. Thus $k'_3=k_3-1$ and $d'=d+1$. Note that $g'(w)=g(w) \ge 4k_3+3k_2+2k_1+d-1 - (2k_3+2k_2+k_1) = 2k_3+k_2+k_1+d-1 = 2k'_3+k'_2+k'_1+d'$. The other inequalities are verified as  in Case 1.}
				
				\item If there exists $C=\{v,u,w\} \in\mathcal{C}$, $g(v)=k_2+k_3$, $L(v) \cap L(u) \neq \emptyset$,  then $S=\{u, v\}$.\\
				{\em Verification: The part $\{w\}$ of $G'$ is the last member of $\mathcal{A}'$. Thus $k'_3=k_3-1$ and $k'_1=k_1+1$. Note that $g'(w)=g(w) \ge 2k_3+2k_2+k_1 - (k_3+k_2) = k_3+k_2+k_1 = k'_3+k'_2+k'_1$. For $u,v \in C  \in \mathcal{C}$, either $g(u)+g(v) \ge 2k_3+2k_2+k_1+1$ or $g'(u)+g'(v) \ge g(u)+g(v)-1$ (as Case 2 does not apply). Hence (c-2) holds for $(G',g')$.  As Case 1 does not apply, the left hand side of (c-3) reduces by at most 2, and the right hand side is reduced by 2. Hence (c-3) holds for $(G',g')$ as Case 1 does not apply. The other inequalities are verified as  in Case 1.}

				\item If there exists $C=\{v,u,w\} \in\mathcal{C}$,  $g(v)=k_2+k_3$, $L(v) \cap (L(u) \cup L(w)) = \emptyset$, then $S=\{v\}$. \\
				{\em Verification: In the remaining graph $G'=G-v$, the two vertices $u,w$ are identified into a single vertex $u^*$ with $L'(u^*) = L(u) \cap L(w)$. The set $\{u^*\}$ is the last member of $\mathcal{A}'$. So $k'_3=k_3-1, k'_1=k_1+1$. Note that 
					\[
					g(u)+g(w)\geq (4k_3+3k_2+2k_1+d-1)-(k_3+k_2)=3k_3+2k_2+2k_1+d-1.
					\]
					On the other hand the total number of colours is at most $|V|-1 = 3 k_3 +2k_2+ k_1 +d-1$. As $L(v)$ is disjoint with $L(u)\cup L(w)$,  we have $\norm{L(u ) \cup L( w)} \leq 2 k_3 +k_2+k_1+d -1$. Hence
					\[
					|L'(u^*)| = \norm{L(u) \cap L(w)} \geq k_3 + k_2+k_1 =k'_3+k'_2+k'_1.
					\]
					Note that for $C \in \mathcal{C}$, $\sum_{v \in C}g'(v) \ge \sum_{v \in C}g(v)-2$, as Case 1 does not apply. Hence (c-3) holds for $(G',g')$. The other inequalities are verified as  in Case 3.}

				\item If there exists $ B=\{u,v\} \in\mathcal{B}$, $g(v)=k_2+k_3$, then $S=\{v\}$. \\
				{\em Verification: The part $\{u\}$ of $G'$ is  the last member of $\mathcal{D}'$. Thus $k'_2=k_2-1$ and $d'=d+1$. Note that $g'(u)=g(u) \ge 3k_3+2k_2+k_1+d - (k_3+k_2) = 2k_3+k_2+k_1+d = 2k'_3+k'_2+k'_1+d'$. For $B'=\{x,y\} \in \mathcal{B}$, since Case 1 does not apply, $g'(x)+g'(y) \ge g(x)+g(y)-1$. 
					So (b-2) holds for $(G',g')$.  The other inequalities are verified as  in Case 4. }

				\item If $k_1 \ne 0$ and $A_1=\{v\}$, then $S=\{v\}$.\\
				{\em Verification: In this case, $k'_1=k_1-1$.  As Cases 2,3,4 do not apply,   (b-1), (c-1) and (c-2) were not tight for $g$, and hence they hold for $(G',g')$. Also for (a-1), the index of each member reduces by 1, and hence the right hand side reduces by 1, so it holds for $(G',g')$.  The other inequalities are verified as  in Case 5.}
				
				\item Assume $k_3 \ne 0$
				and $C=\{u,v,w\} \in \mathcal{C}$. As $|C_L| \le |V|-1= 3k_3+2k_2+k_1+d-1$, 
				So $g(u)+g(v)+g(w) \ge 4k_3+3k_2+2k_1+d-1>|C_L|$ and there is  a colour $c$ which appears in two of the three colour sets $L(u)$, $L(v)$, $L(w)$, say $c\in L(u)\cap L(v)$. Let $S=\{u,v\}$. \\
				{\em Verification:  Let $\{w\}$ be the only member of $\mathcal{A}'$. Then $k'_3 = k_3-1$ and $k'_1=k_1+1$, $g'(w)=g(w) \ge k_2+k_3=k'_2+k'_3+1=k'_2+k'_3+k'_1$.  The other inequalities are verified as  in Case 6.}

				\item If $ d > 0$ and $D_1=\{v\}$,  then $S=\{v\}$.\\
				{\em Verification:  In this case,  $k_3=k_1=0$ and $d'=d-1$. (b-1) is not tight for $g$ (as Case 5 does not apply), and hence holds for $(G',g')$. (b-2) holds for $(G',g')$ as the left-hand size reduces by at most 1, and the right hand side reduces by 1. For other member of $\mathcal{D}'$, its index is recued by 1, and hence (d-1) holds for $(G',g')$. Note that $k_1,k_3=0$, so the other inequalities are vacant.}
			\end{enumerate}
			
			Assume all the cases above do not apply.  Then  $G=K_{2 \star k_2 }$, i.e., $G$ consists of $k_2$ parts of size 2, and $g(v) \ge k_2$ for each vertex $v$. It is well-known \cite{ERT1980} that in this case, $G$ is $g$-choosable.
		\end{proof}
		
		\section{Some notation and basic properties for a minimum counterexample}

 By a counterexample of Theorem \ref{thm-main}, we mean a pair $(G,L)$ such that $G$ is a complete multipartite graph and $L$ is a list assignment of $G$ that satisfy the condition of Theorem \ref{thm-main}, and $G$ is not $L$-colourable. 
We say $(G,L)$ is a minimal counterexample to Theorem \ref{thm-main} if $(G,L)$ is a counterexample to Theorem \ref{thm-main} with 
		%\textcolor{blue}{where $G\neq K_{4,2\star(k-1)}$ ($k$ is even) and $G\neq K_{3\star(k/2+1),1\star(k/2-1)}$,}
		\begin{enumerate}
			\item  $|V(G)|$ minimum,
			\item  subject to (1), with $|C_L|$ minimum (recall that $C_L=\bigcup_{v \in V}L(v)$),
			%\item  subject to (1) and (2),  with maximum $\sum_{v \in V}|L(v)|$.
		\end{enumerate}
		It is well-known \cite{Kierstead} that   $|C_L| < |V(G)|$. 	Let 
		\begin{equation}
			\label{eq-lambda}
			\lambda = |V|-|C_L|>0.  
		\end{equation}

  In the remainder of this paper, we assume that $(G,L)$ is a minimum counterexample to Theorem \ref{thm-main}. Assume $G$ is a complete $k$-partite graph.
		By Noel-Reed-Wu Theorem, we know that $k$-chromatic graphs with at most $2k+1$ vertices are $k$-choosable and hence $G$ has exactly $2k+2$ vertices, and  \begin{equation}
			\label{eq-cl}
			|C_L| \le 2k+1.
		\end{equation}
		
		A   part of $G$ of size $i$ (respectively, at least $i$ or at most $i$) is called a {\em $i$-part} (respectively, {\em  $i^+$-part}, or {\em $i^-$-part}). Let $$T = \{v: \{v\} \text { is  a singleton part of  } G\}.$$
		Let $p_i$,  $p_i^+$ and $p_i^-$ be the number of $i$-parts,  $i^+$-parts and $i^-$-parts, respectively.

		For a subset $X$ of $V(G)$, let 
		$$L(X) = \bigcup_{v \in X} L(v).$$
		For three vertices $x,y,z$ of $G$, let 
		$$L(x \vee y) = L(x) \cup L(y), L(x \wedge y) = L(x) \cap L(y),$$
		$$ L((x \wedge y) \vee z) = (L(x) \cap L(y)) \cup L(z).$$

		For $c \in C_L$ and $C' \subseteq C_L$, let $$L^{-1}(c)=\{v: c \in L(v)\}, \ L^{-1}(C') = \bigcup_{c \in C'}L^{-1}(c).$$

		For a part $P$ of $G$ and integer $i$, let 
		\begin{eqnarray*}
			C_{P,i} &=& \{c \in C: |L^{-1}(c) \cap P| = i \}, \\
			% C_{P,i^+} &=& \{c \in C: |L^{-1}(c) \cap P| \ge i \},\\
			\Lambda_{P,i} &=& \max\{| \bigcap_{v \in S}L(v)|: S\subseteq P, |S|=i\}.
		\end{eqnarray*}.

		Assume $\mathcal{S}$  is a partition of $V(G)$
		into a family of  independent sets. Each $S \in \mathcal{S}$ is called an {\em  $\mathcal{S}$ part}. Recall that $\GS$ is the graph obtained from $G$ by identifying each part $S \in \mathcal{S}$ into a single vertex $v_S$, and $\LS$ is the list assignment of $G/\mathcal{S}$ defined as $\LS(v_S)=\bigcap_{v \in S}L(v)$.
		If $S=\{v\}\in \mathcal{S}$ consists of a single vertex of $G$, then we denote   $v_S$ by $v$.
		In this case, $\LS(v)=L(v)$. For the partitions $\mS$ constructed in this paper, most parts of $\mS$ are singletons. To define $\mS$, it suffices to list  its non-singleton parts.

		Recall that $\BS$ is the bipartite graph with partite sets $V(\GS)$ and $C_L$, in which $\{v_S,c\}$ is an edge if and only if $c \in \LS(v_S)$. 
		A matching $M$ in $\BS$ covering $V(\GS)$ induces an $\LS$-colouring of $\GS$, which in turn induces an $L$-colouring of $G$. 	Since $G$ is not $L$-colourable, no such matching $M$ exists. By Hall's Theorem, there is a subset $\XS$ of $V(\GS)$ such that 
		$|\XS| > |N_{\BS}(\XS)|$.
		
		We denote by $\XS$ a subset of $V(\GS)$ for which $|\XS| - |N_{\BS}(\XS)|$ is maximum.
		Let $$\YS=	N_{\BS}(\XS)=\bigcup_{v_S \in \XS}\LS(v_S).$$ 
		The choice of $\XS$ implies that there is a matching $\MS$ in $\BS - (\XS \cup \YS)$   that covers all vertices in $V(\GS) - \XS$. The matching $\MS$ defines a partial colouring $\psi_{\mathcal{S}}$ of $G[\bigcup_{S \in \mathcal{S}-\XS}S]$ with colours from $C_L-\YS$.

		These notation will be used throughout the whole paper.
		
		\begin{observation}
			\label{obs-0} The following easy facts will be used often in the argument. 
			\begin{enumerate}
				\item There is an  injective mapping $\phi: C \to V$ such that $c \in L(\phi(c))$.
				%, and hence $|C| < |V|$. 
				\item If $f$ is a proper colouring of $G$, then there is a surjective  proper colouring $g:V\to C$ such that for every vertex $v$,  $g(v)\in L(v)$ or $g(v)=f(v)$.
				\item No two vertices in the same part of $G$ have the same list, and no colour is contained only  in the lists of vertices in  a same part.
     {\item $G\neq K_{4,2\star(k-1)}$ for any $k$ and $|T|\ge 1$.}
			\end{enumerate}
		\end{observation}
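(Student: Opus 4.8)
I would establish the three assertions in the order (3), (1), (2), since the structural facts in (3) feed into the verification needed for (1), and the injection produced in (1) is the engine for (2). Throughout I rely on the minimality of the counterexample and on the Noel--Reed--Wu theorem that every $k$-colourable graph on at most $2k+1$ vertices is $k$-choosable.

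For (3) I would argue by deletion. Suppose two vertices $u,v$ of a common part $P$ had $L(u)=L(v)$. In the complete $k$-partite graph $u$ and $v$ have identical neighbourhoods and are non-adjacent, so deleting $v$ leaves a $k$-colourable graph on $2k+1$ vertices, which is $L$-colourable by that theorem; giving $v$ the colour of $u$ (legal, as $L(v)=L(u)$ and the two share all neighbours) extends this to an $L$-colouring of $G$, a contradiction. For the second half, suppose a colour $c$ appeared only on vertices of a single part $P$; these are pairwise non-adjacent, so I colour them all with $c$ and delete them. This removes at least one vertex and leaves every surviving list unchanged and of size $\ge k$ (no remaining list contains $c$), so the residual graph has at most $2k+1$ vertices and is $L$-colourable; the two partial colourings combine into an $L$-colouring of $G$, again a contradiction.

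For (1) I would apply Hall's theorem to the bipartite incidence graph on $C$ and $V$ in which $c$ is joined to $v$ precisely when $c\in L(v)$. An injection $\phi$ saturating $C$ exists iff $|L^{-1}(C')|\ge |C'|$ for every $C'\subseteq C$, and once it exists the strict inequality $|C|<|V|$ is immediate: were $\phi$ onto, then $\phi^{-1}\colon V\to C$ would colour the vertices with pairwise distinct colours drawn from their own lists, i.e.\ an $L$-colouring, contradicting that $G$ is a counterexample. The content is therefore Hall's condition. I would verify it by contradiction: a deficient set $C'$ would confine all its colours to the few vertices of $L^{-1}(C')$, and I would exclude this using the $k$-list hypothesis, the completeness of $G$, and minimality; note that part (3) already rules out the simplest deficiencies, since it forces $|L^{-1}(c)|\ge 2$ for every colour. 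This is the step I expect to be the main obstacle: for a general non-$L$-colourable graph the injection can genuinely fail (for instance with very uneven list sizes), so the argument must exploit the global structure of the minimum counterexample rather than non-colourability alone.

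For (2), with the injection $\phi$ in hand I would use an extremal argument. Let $\mathcal{F}$ be the set of proper colourings $g$ of $G$ with $g(v)\in L(v)$ or $g(v)=f(v)$ for every $v$; this is nonempty since $f\in\mathcal{F}$. Choose $g\in\mathcal{F}$ maximizing the number $|C\cap g(V)|$ of colours of $C$ used, and suppose some $c^*\in C$ is missing. Recolouring $\phi(c^*)$ with $c^*$ keeps $g$ proper (no vertex currently uses $c^*$) and inside $\mathcal{F}$ (as $c^*\in L(\phi(c^*))$). If this strictly increases the count we contradict maximality; otherwise $\phi(c^*)$ was the unique holder of some colour $c_1:=g(\phi(c^*))\in C$, and I would recolour $\phi(c_1)$ with the now-freed $c_1$ and iterate. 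A repeated colour along this chain would force some colour to have had two holders, contradicting the uniqueness that triggers each step, so the freed colours are pairwise distinct and the process terminates at a step producing a net gain — covering $c^*$ while losing no colour of $C$ — again contradicting maximality. Hence the extremal $g$ uses every colour of $C$, yielding the desired surjective colouring.
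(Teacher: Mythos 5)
Your parts (3) and (2) are fine. Part (3) is exactly the paper's argument: delete the twin vertex, respectively colour and delete $L^{-1}(c)\subseteq P_i$, and apply the Noel--Reed--Wu theorem to the remaining $k$-colourable graph on at most $2k+1$ vertices; both halves check out, including your observation that no surviving list contains $c$, so the two partial colourings are compatible. For part (2) you actually do more than the paper, which simply cites \cite{NRW2015}: your extremal choice of $g$ maximizing $|C\cap g(V)|$, together with the chain of recolourings $\phi(c^*)\to c^*$, $\phi(c_1)\to c_1,\dots$, is a complete argument, and the crucial point --- that a repeated colour along the chain would exhibit a second holder of some $c_j$ at the moment uniqueness was invoked, since the later chain vertex had not yet been recoloured --- is stated correctly. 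One small wrinkle (shared with the paper's own formulation): to get literally $g\colon V\to C$ you need the colours $f(v)$ retained by badly coloured vertices to lie in $C$; in the paper's applications $f$ is a pseudo-$L$-colouring whose bad colours are frequent colours of $C$, so this is harmless, but strictly speaking your extremal $g$ only achieves $C\subseteq g(V)$.

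The genuine gap is part (1), and you flagged it yourself: you never verify Hall's condition $|L^{-1}(C')|\ge |C'|$ for all $C'\subseteq C$, which is the entire content of the claim --- as you correctly note, deficiency can genuinely occur for non-minimal bad assignments, so non-colourability alone cannot suffice. But your proposed toolkit (``the $k$-list hypothesis, the completeness of $G$, and minimality'') points in the wrong direction, and your hope that (3) helps is unfounded: (3) only yields $|L^{-1}(c)|\ge 2$ for each single colour, nowhere near Hall's condition for sets. The paper's proof is the one-line remark that (1) ``is well-known (cf.\ \cite{Kierstead}) and also easy to verify (use the minimality of $|C|$)'', and the operative minimality is optimization criterion (2) in the choice of the counterexample, the minimality of the palette $|\bigcup_{v\in V}L(v)|$, not any structural property of $G$. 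The mechanism is purely list-theoretic and valid for arbitrary graphs: if some $C'$ is deficient, the colours of $C'$ occur only in the lists of the fewer than $|C'|$ vertices of $L^{-1}(C')$, and one can relabel colours of $C'$ within those lists by a suitable non-injective map of $C'$ into itself that is injective on each trace $L(w)\cap C'$; this keeps all list sizes, lets any proper colouring for the new lists be pulled back to a proper $L$-colouring, and strictly shrinks the palette --- contradicting criterion (2). Since your part (2) is built on the injection $\phi$ from part (1), the proposal as written does not stand on its own without this step.
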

		\begin{proof}
			(1)  is well-known ({Corollary 1.8 in \cite{NRW2015}}) and also easy to verify (use the minimality of $|C_L|$). 
			
			(2) was   proved in {Proposition 1.13 in } \cite{NRW2015}.
			
			(3) If $u,v$ are in the same part and $L(u) = L(v)$, then 
			By Noel-Reed-Wu Theorem, there is a proper $L$-colouring $f$ of $G-u$,  which extends to   a proper $L$-colouring of $G$ by letting $f(u)=f(v)$.
			
			If there is a colour $c$ such that $L^{-1}(c) \subseteq P_i$ for some part $P_i$ of $G$, then by Noel-reed-Wu Theorem, $G-L^{-1}(c)$ has an $L$-colouring $f$, which extends to an $L$-colouring of $G$ by colouring vertices in $L^{-1}(c)$ with colour $c$.

    (4)  It was proved in \cite{EOOS2002}  that $K_{4,2\star(k-1)}$ is not $k$-choosable if and only if $k$ is even.   By our assumption, $G\neq K_{4,2\star(k-1)}$ for even $k$. 
    Thus $G\neq K_{4,2\star(k-1)}$ for any $k$. It was proved in \cite{GM1998} that $G=K_{3\star 2, 2 \star(k-2)}$ is $k$-choosable. Using the fact that $|V(G)|=2k+2$, it is easy to see that $|T| \ge 1$.
		\end{proof}

		\begin{lemma}
			\label{lem-noncommoncolour}
			If 	$P$ is a $2^+$-part  of $G$, then $\bigcap _{v\in P}L(v)=\emptyset$. Consequently for each colour $c \in C$, $|L^{-1}(c)| \le k+p_1+2$. 
		\end{lemma}
		\begin{proof}
			Assume the lemma is not true. 	  
			We choose such a
			part $P$ of maximum size, and colour vertices in $P$ by a common colour $c$. Let $L'(v) = L(v)-\{c\}$ for $v \in V(G)-P$. If $|P| \ge 3$, then $L'$ and $G-P$ satisfies the condition of Noel-Reed-Wu Theorem and hence $G-P$ has an $L'$-colouring. 
			
			Assume $|P|=2$. By (4) of Observation \ref{obs-0},  $G-P \ne K_{4, 2 \star (k-2)}$. 
			If  $G-P \ne  K_{3 \star (q+1), 1 \star (q-1)}$, then by the minimality of $G$, $G-P$ has an $L'$-colouring. 
			If $G-P =  K_{3 \star (q+1), 1 \star (q-1)}$, then since
			each 3-part $P$    has at most two vertices $v$ for which $c \in L(v)$, it is   straightforward to verify that $G-P$ and $L'$ satisfy the condition of Lemma \ref{ind3}. Hence $G-P$ has an $L'$-colouring.

			For any colour $c \in C$, each $2^+$-part contains a vertex $v \notin L^{-1}(c)$. So $$|L^{-1}(c)| \le |V(G)| - p_2^+ = 2k+2 - (k-p_1)=k+p_1+2.$$
			This completes the proof of Lemma \ref{lem-noncommoncolour}.
		\end{proof}

		%\begin{definition}
		%\label{def-gs}
		%Assume $\mathcal{S}$  is a partition of $V(G)$, in which each part $S \in \mathcal{S}$ is an independent set.
		%We denote by $\GS$ the graph obtained from $G$ by identifying each part $S \in \mathcal{S}$ into a single vertex $v_S$. Let $\LS$ be the list assignment of $G/\mathcal{S}$ defined as $\LS(v_S)=\bigcap_{v \in S}L(v)$.
		%\end{definition} 

		%First we prove a property of a counterexample to Theorem \ref{thm-main} with minimum number of vertices.

		\section{Graphs with  most parts of size at most 3}

		In	this section, we consider complete $k$-partite graphs  whose most parts  are $3^-$-parts. 
		%Let $q=p^+_3$ be the number of $3^+$-parts.
		
		Let 
		\begin{eqnarray*}
			\mathcal{G}_1 &=& \{ K_{5, 3 \star (q-1), 2 \star (k-2q), 1 \star q}: k \ge 2q \ge 2\}, \\
			\mathcal{G}_2 &=& \{K_{4 \star a, 3 \star (q-a), 2 \star b, 1 \star (k-q-b)}: a \le 2, {a \le q, b\ge 0, q+b\le k, a+2q+b=k+2.}\}
		\end{eqnarray*}

		\begin{theorem}
			\label{thm-main2}
			$G \notin  \mathcal{G}_1 \cup \mathcal{G}_2$.
		\end{theorem}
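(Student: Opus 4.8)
The plan is to suppose, for contradiction, that the minimum counterexample $G$ lies in $\mathcal{G}_1\cup\mathcal{G}_2$, and to exhibit an $L$-colouring of $G$, contradicting the assumption that $(G,L)$ is a counterexample. The only tool tailored to complete multipartite graphs that we have is Lemma \ref{ind3}, but it applies only when every part has size at most $3$. When $G\in\mathcal{G}_2$ with $a=0$ this is already the case, so there I would simply assign the $2$-parts to $\mathcal{B}$, the $3$-parts to $\mathcal{C}$, and the singletons to $\mathcal{A}$ (ordered), and verify \eqref{invK:a-1}--\eqref{invK:c-3} directly. In all remaining cases, $G$ has one part of size $5$ (when $G\in\mathcal{G}_1$) or one or two parts of size $4$ (when $G\in\mathcal{G}_2$, $a\in\{1,2\}$); there the strategy is to colour a few vertices of the oversized part(s) with carefully chosen colours, delete those colours from the remaining lists, pass to a complete multipartite graph $G'$ all of whose parts have size at most $3$, and then verify that $G'$ with $f'(v)=|L'(v)|$ satisfies Lemma \ref{ind3}, so that the resulting $L'$-colouring extends the partial colouring to an $L$-colouring of $G$.

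To choose which colours to delete I would analyse the colour multiplicities inside an oversized part $P$ of size $s\in\{4,5\}$. Since $\sum_{v\in P}|L(v)|\ge sk$ while $|C|\le 2k+1$, and since Lemma \ref{clm-nocommoncolour} forbids a colour common to all of $P$ (so multiplicities in $P$ are at most $s-1$), a pigeonhole estimate gives at least $k$ colours of multiplicity at least $2$ in $P$. The efficient reduction is to pick a colour $c$ of \emph{maximum} multiplicity $m$ in $P$, colour all $m$ vertices of $P\cap L^{-1}(c)$ with $c$, and delete $c$: this shrinks $P$ to size $s-m$ while removing only a single colour, and the $s-m$ surviving vertices of $P$ do not contain $c$, so their lists are untouched. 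When $m\le 2$ (so the oversized part is reduced only to a $3$-part, or to a $2$-part for a size-$4$ part) the surviving part again keeps full lists of size at least $k$, which is exactly what feeds the tight condition \eqref{invK:c-3}.

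For the verification I would assign the parts of $G'$ to the classes of Lemma \ref{ind3} as before: $2$-parts to $\mathcal{B}$, $3$-parts to $\mathcal{C}$ (including the reduced copy of $P$ when it has size $3$), and the singletons to $\mathcal{A}$, ordered so that a singleton that lost a colour occupies a small-index position, where \eqref{invK:a-1} is weakest. For $\mathcal{G}_1$ this yields parameters such as $k_3'=q$, $k_2'=k-2q$, $k_1'=q$ (with $d'=0$ when no singleton must be pushed into $\mathcal{D}$), and for $\mathcal{G}_2$ the analogous values; because each list of $G$ had size at least $k$ and each surviving vertex lost at most one colour, the inequalities should then close with exactly the margin supplied by the hypotheses. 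The crucial structural input is that $\mathcal{G}_1$ contains none of the extremal graphs, while $\mathcal{G}_2$ contains only $K_{4, 2\star (k-1)}$ and $K_{3\star (k/2+1), 1 \star (k/2-1)}$ (for even $k$), both excluded by the defining hypothesis on $G$; the difference between $G$ and these extremal graphs is precisely the source of the slack in the otherwise tight inequalities \eqref{invK:b-2}, \eqref{invK:c-2}, \eqref{invK:c-3}.

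The hardest part will be the verification itself, because these three inequalities are tight exactly at the two extremal graphs, leaving essentially no margin, and every deleted colour must be accounted for. The sharpest difficulty is \eqref{invK:c-3}: a colour $c$ deleted because of $P$ may also lie in as many as two vertices of some original $3$-part $C$ (two being the most Lemma \ref{clm-nocommoncolour} permits for a single part), in which case $\sum_{v\in C}f'(v)$ drops by $2$ and the margin in \eqref{invK:c-3} can vanish, seemingly forcing a singleton into $\mathcal{D}$ where the list-size requirement \eqref{invK:s-1} cannot be met. Resolving this requires choosing the deleted colour(s) so as not to doubly hit any $3$-part, which cannot always be arranged and forces sub-cases controlled by the quantities $t_P$ and by the extremal conditions (2) and (3) in the definition of $G$ together with Observation \ref{obs-0}. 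The boundary cases — small $q$ (notably $q=1$, where reducing $P$ to a $3$-part produces too few $3$-parts and one must instead reduce $P$ to a singleton by colouring a colour of multiplicity $4$, or two disjoint pairs) and the compounded bookkeeping when $a=2$ in $\mathcal{G}_2$ — are where most of the length of the argument will reside.
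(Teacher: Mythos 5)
There is a genuine gap, and it appears already in your base case. For $G\in\mathcal{G}_2$ with $a=0$ you propose to apply Lemma \ref{ind3} to $G$ itself, but its hypotheses cannot hold there: with $d=0$ and $|V(G)|=3k_3+2k_2+k_1=2k+2$, condition \eqref{invK:b-2} demands $f(u)+f(v)\ge 3k_3+2k_2+k_1+d=2k+2$ for every $2$-part, while Lemma \ref{clm-nocommoncolour} makes the two lists of a $2$-part disjoint, so a $k$-list assignment supplies only $2k$; similarly \eqref{invK:c-3} demands $4k_3+3k_2+2k_1+d-1=(2k+2)+k-1=3k+1$ against an available $3k$. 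This is not a fixable bookkeeping slip: $K_{3\star(k/2+1),1\star(k/2-1)}$ belongs to $\mathcal{G}_2$ with $a=0$ and is not $k$-choosable, and since the hypotheses of Lemma \ref{ind3} are purely numeric in $f$, no whole-graph application with $f\equiv k$ can distinguish the non-extremal members of $\mathcal{G}_2$ from this extremal graph. The ``slack coming from $G$ differing from the extremal graphs'' that you invoke is structural (it lives in how lists overlap), not numeric, so it cannot be fed into the lemma this way. The same arithmetic shows your reduction step does not close in the oversized-part cases: after colouring a multiplicity-$3$ colour $c$ in the $5$-part of $G\in\mathcal{G}_1$ and deleting it, condition \eqref{invK:a-1} at index $i=q$ requires some singleton whose list avoids $c$, which need not exist; after colouring a multiplicity-$2$ colour, \eqref{invK:c-3} fails by $1$ on any $3$-part hit twice by $c$ (which Lemma \ref{clm-nocommoncolour} permits). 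You correctly identify these obstructions in your last paragraph, but identifying them is where your argument stops and where the actual proof has to begin; ``forces sub-cases controlled by $t_P$'' is a hope, not a mechanism, and in the bad configurations no single deleted colour works.

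The paper's proof of Theorem \ref{thm-main2} uses an entirely different engine. It builds a partition $\mS$ of $V(G)$ whose non-singleton parts $S_1,\ldots,S_{i_0}$ are $2$- or $3$-subsets of the first parts chosen to have large common lists (Definitions \ref{def-order} and \ref{def-construction}, guided by the dichotomies (P1)/(P2) and (P3)/(P4) of Lemma \ref{clm-00}), passes to the quotient $\GS$ with lists $\LS(v_S)=\bigcap_{v\in S}L(v)$, and applies Hall's theorem to the bipartite graph $\BS$: if no matching covers $V(\GS)$, a deficient set $\XS$ with $|\XS|>|\YS|$ exists, and Claims \ref{clm-dd}--\ref{clm-2ki0} squeeze $|\XS|$ and $|\YS|$ between incompatible bounds (e.g.\ $|\XS|\ge k+i_0+1$ versus $|\XS|\le k+i_0+1$ with forced equalities that contradict Observation \ref{obs-s} or the choice of $S_2$). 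Lemma \ref{ind3} appears only in an auxiliary role (e.g.\ Claim \ref{clm-p1}, showing $C_{P_1,4}=\emptyset$, and inside Lemma \ref{clm-nocommoncolour}), precisely because, as your own difficulties show, it cannot carry the main reduction for graphs on $2k+2$ vertices. To repair your proposal you would need either a strengthening of Lemma \ref{ind3} that exploits list structure rather than list sizes, or the quotient-plus-Hall argument of the paper; as written, the proposal does not constitute a proof.
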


		We may assume that $k \ge 8$, as for $k \le 7$, we can check directly the graphs in $\mathcal{G}_1, \mathcal{G}_2$ are $k$-choosable. 
  
  Assume $G \in  \mathcal{G}_1 \cup \mathcal{G}_2$. 
		We order  the parts of $G$ as $ P_1,P_2, \ldots, P_k$ so that 
		\begin{itemize}
			\item if $G \in \mathcal{G}_1$, then   $P_1$ is 
			the 5-part and $P_2, P_3,\ldots , P_q$ are $3$-parts with $\Lambda_{P_2,2}\ge \Lambda_{P_3,2}\ge \ldots \ge \Lambda_{P_q,2}$;
			\item if $G \in \mathcal{G}_2$, then the first $a$ parts are the 4-parts of $G$, and 
			$P_{a+1}, P_{a+2},\ldots , P_q$ are $3$-parts with $\Lambda_{P_2,2}\ge \Lambda_{P_3,2}\ge \ldots \ge \Lambda_{P_q,2}$. If $a =2$, then order $P_1,P_2$ so that $\Lambda_{P_1,3} \ge \Lambda_{P_2,3}$.
		\end{itemize} 
		Let $$i_0=\max \{j: \Lambda_{P_j, 2}\ge j\}.$$
		For a 3-part $P $ of $G$,   we have $3k \le \sum_{v \in P}|L(v)| \le  |C_L|+|C_{P,2}|  \le 2k+1+|C_{P,2}|$. So $|C_{P,2}| \ge k-1$. As $P$ has three 2-subsets,  we have $\Lambda_{P, 2} \ge (k-1)/3 \ge 2$. 
		
		\begin{claim}
			\label{clm-p1}
			If $G\in \mathcal{G}_1$, then 	 $C_{P_1,4} = \emptyset$ and $C_{P_1,3} \ne \emptyset$.  
		\end{claim}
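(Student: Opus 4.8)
The plan is to prove the two assertions separately, each by reducing to a result already available. For $C_{P_1,4}=\emptyset$ I would argue by contradiction. Suppose some colour $c$ lies in exactly four of the five lists of the $5$-part, say $P_1=\{v_1,\dots,v_5\}$ with $c\in L(v_i)$ for $i\le 4$ and $c\notin L(v_5)$. Since $P_1$ is independent, I can colour all of $v_1,\dots,v_4$ with $c$ at once, delete them, and set $G'=G-\{v_1,\dots,v_4\}$ with $L'(x)=L(x)\setminus\{c\}$. This turns $P_1$ into the singleton $\{v_5\}$, whose list is untouched (as $c\notin L(v_5)$), so $|L'(v_5)|\ge k$, while every remaining vertex keeps $|L'(x)|\ge k-1$. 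The graph $G'$ is a complete $k$-partite graph all of whose parts have size at most $3$, so the aim is to certify that $G'$ is $L'$-colourable via Lemma \ref{ind3}; together with the colour $c$ on $v_1,\dots,v_4$ this produces an $L$-colouring of $G$, contradicting the choice of $(G,L)$.

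The key step is to exhibit a partition of the parts of $G'$ that meets the hypotheses of Lemma \ref{ind3}. Here $G'$ has $k_3=q-1$ triples, $k_2=k-2q$ pairs and $q+1$ singletons; I would place all singletons into $\mathcal{A}$ (so $d=0$, $k_1=q+1$) and order them so that $\{v_5\}$ occupies the last slot $A_{q+1}$. Taking $f(x)=|L'(x)|$ and using $f(v_5)\ge k$ and $f(x)\ge k-1$ otherwise, a direct substitution verifies \eqref{invK:a-1}--\eqref{invK:c-3}. The delicate instances are \eqref{invK:a-1} at $i=q+1$, whose right-hand side is exactly $k$ (this is precisely why $v_5$, the only vertex with $f\ge k$, must be placed last), and \eqref{invK:b-2} and \eqref{invK:c-3}, whose right-hand sides evaluate to $2k-2$ and $3k-3$ and are therefore met with equality by the crude bound $f\ge k-1$. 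I expect this verification to be the main obstacle: the estimates are tight and the ordering of $\mathcal{A}$ must be chosen with care, though the work itself is routine substitution.

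For $C_{P_1,3}\neq\emptyset$ I would use a counting argument. By Lemma \ref{clm-nocommoncolour} no colour lies in all five lists, so $C_{P_1,5}=\emptyset$; combined with the part just proved, $C_{P_1,4}=\emptyset$ as well. If in addition $C_{P_1,3}=\emptyset$, then every colour meets $P_1$ in at most two vertices, whence
\[
5k \le \sum_{v\in P_1}|L(v)| = \sum_{c\in C}|L^{-1}(c)\cap P_1| \le 2|C| \le 2(2k+1) = 4k+2 ,
\]
using $|C|\le 2k+1$, which follows from $\lambda>0$. This forces $k\le 2$, contradicting the standing assumption $k\ge 7$ and completing the proof.
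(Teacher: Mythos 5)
Your proof is correct and follows essentially the same route as the paper: colour the four vertices of $L^{-1}(c)\cap P_1$ with $c$, delete them, and apply Lemma \ref{ind3} with the surviving singleton $\{v_5\}$ placed as the last part of $\mathcal{A}$ and $\mathcal{D}=\emptyset$, then derive $C_{P_1,3}\neq\emptyset$ by the same double count $5k\le 2|C|\le 2(2k+1)$ against the standing assumption $k\ge 7$. The only difference is that you spell out the verification of \eqref{invK:a-1}--\eqref{invK:c-3} (correctly, with the tight cases at $2k-2$ and $3k-3$), which the paper dismisses as easy.
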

		\begin{proof}
			If $c \in C_{P_1,4}$, then   we colour vertices in $L^{-1}(c) \cap P_1$ with colour $c$, and let $L'(v)= L(v)-\{c\}$ for $v \in G-(L^{-1}(c) \cap P_1)$. It is easy to verify that $G'=G-(L^{-1}(c) \cap P_1)$ and $L'$ satisfy the condition  of Lemma \ref{ind3} (with $P_1-L^{-1}(c)$ being the last part in $\mathcal{A}$, and with $\mathcal{D} = \emptyset$), and hence $G'$ is $L'$-colourable, and $G$ is $L$-colourable, a contradiction.
			
			If  $C_{P_1,3} = \emptyset$, then each colour $c \in C_L$ is contained in $L(v)$ for at most two vertices $v \in P_1$. Hence $2(2k+1) \ge 2|C_L| \ge \sum_{v \in P_1}|L(v)| = 5k$, which implies that $k \le 2$, a contradiction. 
		\end{proof}
		
		\begin{claim}
			\label{cl-1b}
			$G \ne K_{5,2\star(k-2),1}$.
		\end{claim}	
		\begin{proof}
			If   $G=K_{5,2\star(k-2),1}$, then fix a 3-subset $S_1$ of $P_1$ with 
			$\bigcap_{v \in S_1}L(v) \ne \emptyset$. Let $\mathcal{S}$ be the partition of $0V(G)$ with one non-singleton part  $S_1$.  Then   $|V(\GS)|=2k$ and hence $|\XS|\le 2k$ and $|\YS| \le 2k-1$. By Lemma \ref{lem-noncommoncolour}, $|\XS \cap P|\le 1$ for any 2-part $P$. So
			$|\XS|\le k+2$ and $|\YS| \le k+1$.
			On the other hand,  $|\XS|\ge 2$ and hence $v\in \XS$ for some vertex $v$ with $|\LS(v)|\ge k$ and hence $|\YS| \ge k$ and $|\XS|\ge k+1$, and hence 
			$|\XS \cap P'_1 | \ge 2$. This in turn implies that   $|\YS|=k+1$ and hence $|\XS|=k+2$. Then   $P'_1\subseteq \XS$ and  $|\YS|\ge  |\LS(P'_1)| \ge k+2=|\XS|$ (by Claim \ref{clm-p1}), a contradiction.  
		\end{proof}

		 It follows from Observation \ref{obs-0} that  $G\neq K_{4,2\star(k-1)}$ for any $k$. As $G \ne K_{5,2\star(k-2),1}$,  $G$ has at least two $3^+$-parts. Therefore $$i_0 \ge  2.$$
		
		For $i=1,2,\ldots, i_0$, we shall  choose a subset $S_i$ of $P_i$ of size $2$ or $3$, and let $\mS$ be the partition of $V(G)$ with non-singleton parts $\{S_1,S_2, \ldots, S_{i_0}\}$. The rules for choosing the sets $S_i$ will be given later.
		
		For simplicity, in the graph $\GS$, for $i=1,2,\ldots, i_0$, we denote $v_{S_i}$ by $z_i$, and let 
		$$Z = \{z_1, z_2, \ldots, z_{i_0}\}.$$
		We denote by $P'_i$ the part of $\GS$, where for $1 \le i \le i_0$, $P'_i$ is  obtained from the part $P_i$   by identifying $S_i$ into a new vertex $z_i$, and for $i_0+1 \le i \le k$, $P'_i=P_i$. 
		
		As $i_0 \ge 2$, we have $|V(\GS)| \le 2k$, and hence
		\begin{equation}
			\label{eq-2k}
			|\XS| \le 2k, \ \ |\YS| \le 2k-1.
		\end{equation} 
		We shall prove further upper and lower bounds for $|\XS|$ and $|\YS|$ that eventually lead to a   contradiction.

		The details are delicate and a little complicated, which is perhaps unavoidable, as $K_{4, 2 \star (k-1)}$ and $K_{3 \star (k/2+1), 1 \star (k/2-1)}$ (for even integer $k$) are very close to graphs in $\mathcal{G}_1 \cup \mathcal{G}_2$, and they are   not $k$-choosable. We divide the proofs for $G \notin \mathcal{G}_1$ and $G \notin \mathcal{G}_2$ into two subsections. 
		%On the other hand, the rough idea is simple: We construct a special partition $\mS$ of $V(G)$. For this partition, such an $\XS$ does not exist in general (i.e., there is a matching in $\BS$ that covers $V(\GS)$).  In  case such an $\XS$ does exist,   $G$ and $L$ have very special structure, and we can   find a proper $L$-colouring of $G$ directly (by first colouring a few vertices and then applying Lemma \ref{ind3} or Noel-Reed-Wu Theorem).
		%,	or slightly modify the partition $\mS$ to another partition $\mS'$ for which the corresponding $X_{\mathcal{S}'}$ does not exist.
		
		%\subsection{Constructing the partition $\mathcal{S}$}
		
		\subsection{$G \notin \mathcal{G}_1$}
		
		Assume to the contrary that  $G \in \mathcal{G}_1$.

		The subsets $S_i$ for $i=1,2,\ldots, i_0$ are chosen as follows:
		\begin{enumerate}
			\item[(1)] $S_1$ is a 3-subset of $P_1$ with $|\bigcap_{v \in S_1}L(v)| =\Lambda_{P_1,3}$.  
			\item[(2)]  For $2 \le i \le i_0$,  $S_i$ is a 2-subset of $P_i$ with $|\bigcap_{v \in S_i}L(v)|=\Lambda_{P_i,2}$.
		\end{enumerate}

		Assume for $i=2,3,\ldots, i_0$, $P_i=\{u_i,v_i,w_i\}$ and $S_i=\{u_i, v_i\}$.

		Since $|P_1-S_1|=2 $, by (3) of Observation \ref{obs-0}, $|L(P_1-S_1)| \ge k+1$. As $(\bigcap_{v \in S_1} L(v)) \cap L(P_1-S_1) = \emptyset$, we know that 
		\begin{equation}
			\label{eq-p1}
			|\LS(P'_1)| \ge k+2.
		\end{equation}
		
		It follows from the definition of $\mS$ that 
		for $i=1,2,\ldots, i_0$, $|\LS(z_i)| \ge i_0$.
		
		If $\XS \subseteq Z$ and $z_i \in \XS$ for some $i \le i_0$, then we have $|\YS| \ge |\LS(z_i)| \ge i_0 \ge |\XS|$, a contradiction. Thus $\XS -Z \ne \emptyset$. Let $v \in \XS-Z$. Then 
		\begin{equation*}
			\label{eq-k}
			|\YS| \ge |\LS(v)| = |L(v)| \ge k, \ \ |\XS| \ge k+1.
		\end{equation*}

		This implies that  $|\XS \cap P'_i| \ge 2$ for some $i$. As 
		$|\LS(A)| \ge k+1$ for any 2-subset $A$ of $P'_i$ (for any $i$), we have   \begin{equation}
			\label{eq-k+1}
			|\YS| \ge  k+1, \ \ |\XS| \ge k+2.
		\end{equation}

		\begin{claim}
			\label{lem-ki0}
			$ |\YS| \ge k+i_0$ and hence $|\XS| \ge k+i_0+1$.
		\end{claim}
		\begin{proof}
			If there is an index $  i_0+1 \le i \le q$ such that $u,v \in \XS \cap P'_i$, then 
			$ |\YS| \ge |L(u \vee v)| \ge 2k-|L(u \wedge v)| \ge 2k-i_0 > k+i_0$ (as $i_0 \le q-1 < k/2$) and we are done. 
			
			Assume $ |\XS \cap P'_i|\le 1$ for any $  i_0+1 \le i \le q$. 
			If  $\{z_i, w_i\} \subseteq \XS$ for some $i \ge 2$,  then $|\YS| \ge |L(w_i)|+|L_S(z_i)| +\ge k+i_0$, and we are done. 
			Otherwise, $|\XS| \ge  k+2$ (by (\ref{eq-k+1})) implies that 
			$P'_1 \subseteq \XS$ and $|\XS| = k+2$. By (\ref{eq-p1}), $|\YS| \ge |\LS(P'_1)| \ge k+2$, a contradiction.
		\end{proof}
		
		\begin{claim}
			\label{lem-ki01}
			If
			$ |\YS| = k+i_0$, then $\Lambda_{P_i,2} = i_0$ for $i=2,3,\ldots, i_0$ and 
			there is an index $2 \le i \le i_0$  such that 
			$P_i$ has a  $2$-subset $S$  with $| \bigcap_{v \in S}L(v)| \ge 2$  and $\bigcap_{v \in S}L(v) \cup L(P_i-S) \not\subseteq \YS$.
		\end{claim}
		\begin{proof}
			Assume $|\YS|=k+i_0$. Then $|\XS| \ge k+i_0+1$.

			By the argument in the proof of  Claim \ref{lem-ki0}, for any index $i > i_0$, $|\XS \cap P_i| \le 1$.
			This implies that $|\XS| \le k+i_0+1$, and hence $|\XS| = k+i_0+1$ and $P'_i \subseteq \XS$ for $i=1,2,\ldots, i_0$. As $|\LS(P'_i)| \ge k+i_0$ for $2 \le i \le i_0$, we conclude that for $2 \le i \le i_0$, $\YS = \LS(P'_i)$ and $\Lambda_{P_i,2}=i_0$. 
			
			We shall find an index $2 \le i \le i_0$,  a  $2$-subset $S$ of $P_i$ with $| \bigcap_{v \in S}L(v)| \ge 2$  and $\bigcap_{v \in S}L(v) \cup L(P_i-S) \not\subseteq \YS$.
			
			Assume first that  there is an index $2 \le i \le i_0$ such that $L(P_i) \not\subseteq \YS$. 
			
			As  $L(w_i)\subseteq \YS$,
			we may assume that there is a colour  $c\in L(u_i)-\YS$. 
			If $|L(v_i \wedge w_i)|\ge 2$, then let $S=\{v_i,w_i\}$, we are done. 
			
			Assume $|L(v_i \wedge w_i)|\le 1$. This implies that $  |L(v_i \vee w_i)|\ge 2k-1  >k+i_0$.
			So there is   a colour $c'\in L(v_i) -\YS$. If $ |L(u_i \wedge w_i)|\ge 2$, then let $S=\{u_i,w_i\}$, we are done. Assume $|L(u_i \wedge w_i)|\le 1$.
			Hence  
			$$2+i_0 \ge |L(u_i \wedge w_i)|+ |L(v_i \wedge w_i)|+ |L(u_i \wedge v_i)| =|C_{P_i,2}|\ge 3k-|L(P_i)|\ge 3k-(2k+1).$$
			This implies that $k-3 \le i_0 \le q \le k/2$,  contrary to our assumption that $k \ge 8$.
			
			Assume next that $L(P_i)=\YS$  for   $2\le i\le i_0$.
			As each colour in $L(P_i)$ is contained in at most two lists of vertices of $P_i$, we have 
			$2(k+i_0) \ge 3k$, i.e.,  $i_0 \ge  k/2$. 
			Hence $i_0=k/2=q$ and $G=K_{5,3\star (q-1), 1\star q}$. 
			
			For each singleton part $\{v\}$ of $G$, we have $v \in \XS$ and hence $L(v) \subseteq \YS$ for each singleton part $\{v\}$. Thus
			$L(\bigcup_{i=2}^kP_i) = \YS$. 
			
			Since $C_{P_1,4}= \emptyset$,    we have $|L(P_1)|\ge 5k/3 > k+i_0 = |\YS|$. Let $c\in L(P_1)-\YS$. Then $c $ is contained in the lists of vertices in $P_1$ only, in contradiction to Observation \ref{obs-0}.  
		\end{proof}
		
		If $|\YS| = k+i_0$, then as $\Lambda_{P_i, 2}=i_0$ for $2\le i \le i_0$, we may assume that  $S'_2=\{u_2, w_2\}$ is a 2-subset of $P_2$ for which $| \bigcap_{v \in S'_2}L(v)| \ge 2$  and $\bigcap_{v \in S'_2}L(v) \cup L(P_2-S'_2) \not\subseteq \YS$.
		
		We let $\mathcal{S}'$ be the partition of $V(G)$ whose non-singleton parts are obtained from that of $\mathcal{S}$ by replacing $S_2$ with $S'_2$, i.e., $\mathcal{S}' = \{S_1, S'_2,S_3, \ldots, S_{i_0}\}$. 
		
		Instead of $\GS$, we consider the graph $G/\mathcal{S}'$. 
		We still have (\ref{eq-k+1}), i.e., 
		$$|Y_{\mathcal{S}'}| \ge  k+1, \ \ |X_{\mathcal{S}'}| \ge k+2.$$
		Then analog to the proof of Claim \ref{lem-ki0}, we can show that  
		$$|Y_{\mathcal{S}'}| \ge  k+i_0+1, \ \ |X_{\mathcal{S}'}| \ge k+i_0+2.$$
		
		Let $\mathcal{S}'' = \mathcal{S}$ if $|\YS| \ge k+i_0+1$, and $\mathcal{S}'' = \mathcal{S}'$ if $|\YS| =k+i_0$. Then 
		$$|Y_{\mathcal{S}''}| \ge  k+i_0+1, \ \ |X_{\mathcal{S}''}| \ge k+i_0+2.$$
		
		For simplicity, we assume that $\mathcal{S}''=\mathcal{S}$. Then 
		$|\XS| \ge k+i_0+2$ implies that $|\XS \cap P_i| \ge 2$ for some $i \ge i_0+1$.
		Assume $\{u,v\} \subseteq X \cap P_i$ for some $i \ge i_0+1$. Then 
		\begin{equation}
			\label{eq-2ki0}
			|\YS| \ge |L(u \vee v)| = 2k-|L(u \wedge v)| \ge 2k-i_0.
		\end{equation}
		
		Since   $\XS$ contains at most one vertex of any 2-part, we have
		\begin{equation*}
			\label{eq-6}
			|\XS|\le k+2q+1-i_0. 
		\end{equation*}

		If for some $i \ge i_0+1$, $ P_i=\{u_i,v_i, w_i\} \subseteq \XS$, then 
		\begin{equation*}
			\begin{aligned}
				|\YS| &\ge |L(P_i)| = |L(u_i)|+|L(v_i)|+|L(w_i)| \\
				& - (|L(u_i \wedge v_i)| + |L(u_i) \cap L(w_i)| + |L(v_i) \cap L(w_i)|) \\
				& \ge 3k-3i_0.
			\end{aligned}
		\end{equation*} 
		Hence $k+2q+1-i_0 \ge |\XS| \ge 3k-3i_0+1$, which implies that $k \le q+i_0\le 2q-1$, in contrary to $k\ge 2q$. 
		
		Thus $|\XS \cap P'_i| \le 2$ for $i \ge i_0+1$. This  implies that $|\XS| \le k+q+1$.
		
		On the other hand, $|\YS| \ge 2k-i_0$ (by (\ref{eq-2ki0}))	implies that  $|\XS| \ge 2k-i_0+1$. Hence   $ k+q+1 \ge |\XS| \ge 2k-i_0+1$, which implies that  $k \le i_0+q \le 2q-1$,   in contrary to $k\ge 2q$. 
		
		This completes the proof that  $G \notin \mathcal{G}_1$.

  \subsection{$G \notin \mathcal{G}_2$}
	
	Assume to the contrary that $G \in \mathcal{G}_2$.

\begin{claim}
	\label{clm-000}
Assume $P$ is a 4-part of $G$ and  $\Lambda_{P,3} \le 1$. Then $\Lambda_{P, 2} \ge 2$. If $|\Lambda_{P,2}|\ge 3$, then for any 2-subset $S$ of $P$  with 
$| \bigcap_{v \in S}L(v)| = \Lambda_{P, 2}$,   for any $x\in P-S$, $$|\bigcap_{v\in S}L(v) \cup L(x)|\ge k+2.$$  If $\Lambda_{P, 2} = 2$, then there exists a 2-subset $S$ of $P$   such that 
$| \bigcap_{v \in S}L(v) \cap C_{P,2}| = 2$, and hence  for any $x\in P-S$, $|\bigcap_{v\in S}L(v) \cup L(x)|\ge k+2$.
\end{claim}
\begin{proof}	
	Assume $P$ is a 4-part of $G$ and $\Lambda_{P,3} \le 1$. 
 Assume $\Lambda_{P,2}\ge 3$ and $S$ is a 2-subset of $P$ with   $| \bigcap_{v \in S}L(v)| = \Lambda_{P, 2}$.  Then for any $x\in P-S$, since $|\bigcap_{v \in S}L(v) \cap L(x)| \le \Lambda_{P,3} \le 1$, we have
$$|\bigcap_{v \in S}L(v) \cup L(x)|=|\bigcap_{v \in S}L(v)|+|L(x)|-|\bigcap_{v \in S}L(v) \cap L(x)|\ge \Lambda_{P,2}+k-1\ge k+2.$$
 
 Assume $\Lambda_{P,2} \le 2$. As $P$ has four 3-subsets, we have $|C_{P,3}| \le 4$. 
	As $\sum_{i=1}^3 i |C_{P,i}| = \sum_{v \in P}|L(v)|  \ge 4k$ and $\sum_{i=1}^3|C_{P,i}| \le |C_L| \le 2k+1$, it follows that 
	$|C_{P,2}| \ge 2k-9  \ge 7$ (as $k \ge 8$). Since $P$ has six 2-subsets,   there exists a 2-subset $S$ of $P$   such that 
	$|\bigcap_{v \in S}L(v)  \cap C_{P,2}| \ge 2$. Hence $\Lambda_{P,2} \ge 2$ and therefore $\Lambda_{P,2} = 2$. Moreover, there exists a 2-subset $S$ of $P$   such that 
$| \bigcap_{v \in S}L(v) \cap C_{P,2}| = 2$. For any $x\in P-S$, $$ |\bigcap_{v \in S}L(v) \cup L(x)| \ge |\bigcap_{v \in S}L(v)  \cap C_{P,2}| + |L(x)|\ge 2+k.$$ 
\end{proof}

 \begin{definition}
     \label{def-newconstruction}
     For $i=1,2,\ldots, i_0$, we  choose a subset $S_i$ of $P_i$ of size $2$ or $3$  as follows:
     \begin{enumerate}
				\item For $a+1 \le i \le i_0$,  $S_i$ is a 2-subset of $P_i$ with $|\bigcap_{v \in S_i}L(v)| = \Lambda_{P_i,2}$. 
    \item If $a=1$ and $\Lambda_{P_1,3} >0$, then  let $S_1$ be a 3-subset of $P_1$ with $|\bigcap_{v \in S_1}L(v)| = \Lambda_{P_1,3}$.  Otherwise, let $S_1$ be a 2-subset of $P_1$ with $|\bigcap_{v \in S_1}L(v)| =\Lambda_{P_1,2}$.
    \item Assume $a=2$. 
    \begin{itemize}
        \item[(i)] If  $\Lambda_{P_2,3} \ge 2$, then for $i=1,2$, let $S_i$ be a 3-subset of $P_i$ with $|\bigcap_{v \in S_i}L(v)| = \Lambda_{P_i,3}$.
    \item[(ii)] If $\Lambda_{P_1,3} > 0$ and 
    $\Lambda_{P_2,3} \le 1$, then let $S_1$ be a 3-subset of $P_1$ with $|\bigcap_{v \in S_1}L(v)|= \Lambda_{P_1,3}$, and  let  $S_2$ be a 2-subset of $P_2$ such that 
    \begin{itemize}
        \item[(A)] $| \bigcap_{v \in S_2}L(v)| = \Lambda_{P, 2}$,
        \item[(B)] $|\bigcap_{v\in S_2}L(v) \cup L(x)|\ge k+2$ for any $x\in P_2-S_2$,
        \item[(C)] Subject to (A) and (B), $|\LS(P'_1)  \bigcup L(P_2-S_2)|$ is maximum.
    \end{itemize} 
    \item[(iii)] If $\Lambda_{P_1,3}=0$, then for $i=1,2$, let  $S_i$ be a 2-subset of $P_i$ with  $|\bigcap_{v \in S_i}L(v)| = \Lambda_{P_i,2}$,   such that  $|\bigcap_{v\in S_i}L(v) \cup L(x)|\ge k+2$ for any $x\in P_i-S_i$ and subject to this condition, $|\LS(P'_1)\bigcup  \LS(P'_2)|$ is maximum.
    \end{itemize}
		\end{enumerate}	
 \end{definition}

The existence of the 2-subset $S$ in (ii) and (iii) has been proved in  Claim \ref{clm-000}.

			It follows from  the definition of $\mS$ that 
		for $i=1,2,\ldots, i_0$, $|\LS(z_i)| \ge i$. 

The same argument as in the previous subsection shows that 
\begin{equation}
	\label{eq-k+1-2}
	|\YS| \ge  k+1, \ \ |\XS| \ge k+2.
	\end{equation}

\begin{claim}
		\label{clm-ddd}
		If  $|P_i|=4$,	then $|\XS \cap P'_i| \le 2$.
	\end{claim}
	\begin{proof}
		Assume  $P_i=\{u_i,v_i,x_i,y_i\}$. Then $2 \le |P'_i| \le 3$. If $|P'_i| =2$, then the conclusion is trivial.

		Assume $|P'_i| =3$ and assume to the contrary of the claim that  $P'_i = \{z_i, x_i, y_i\} \subseteq \XS$, where $z_i$ is the identification of $u_i$ and $v_i$.  In this case, $\LS(z_i) = L(u_i \wedge v_i)$ and $\LS(x_i)=L(x_i)$, $\LS(y_i) = L(y_i)$. 
		
		If $\Lambda_{P_i,3}=0$, then $\LS(z_i)\cap L(x_i \vee y_i)=\emptyset$. By the choice of $S_i$, 
		$|L(x_i \wedge y_i)| \le |\LS(z_i)|$ and hence 
		$|L(x_i \vee y_i)| \ge 2k - |\LS(z_i)|$. Therefore $|\YS| \ge |\LS(z_i)| + |L(x_i \vee y_i)| \ge 2k$,  in contrary to (\ref{eq-2k}).

		If $\Lambda_{P_i,3}>0$, then by the choice of $S_i$, we know that  $i=a=2$, $\Lambda_{P_2,3}=1$ and $|S_1|=3$, $|P'_1|=2$. Therefore $|\XS| \le |V(\GS)| \le 2k-1$, and $|\YS| \le 2k-2$.

  Assume  $S_2=\{u_2,v_2\}$. By the choice of $S_2$ (see Claim \ref{clm-000}),  $|\LS(z_2)| \ge |L(x_i \wedge y_i)|$ and $|\LS(z_i) \cap L(x_i \vee y_i)| \le |\LS(z_i) \cap L(x_i)| + |\LS(z_i) \cap L(y_i)| \le 2 \Lambda_{P_i,3} =2$.
Hence  
  $|\YS| \ge |\LS(z_i)| + |L(x_i \vee y_i)|-2 \ge 2k-2$. So  $|\XS| = 2k-1$ and $|\YS|=2k-2$, and hence $ \XS= V(\GS)$.  This implies that $i_0=2$. 
  
  By Lemma \ref{lem-noncommoncolour},  $G$ has no $2$-part. Assume $P_3=\{u_3,v_3,w_3\}$. Then since $\Lambda_{P_3,2} \le 2$, and $P_3$ has three 2-subsets, we know that $|C_{P_3,2}| \le 6$. Therefore
  $$3k \le |L(u_3)|+|L(v_3)|+|L(w_3)| = 2|C_{P_3,2}|+|C_{P_3,1}| \le |C_L| + |C_{P_3,2}| \le 2k+6, $$
  a contradiction (as $k\ge 8$).
	\end{proof}
	
	Since $3p_3^++2p_2+p_1 \le 2k+2= 2(p_1+p_2+p_3^+)+2$   and  $G\neq K_{3\star (k/2+1), 1\star (k/2-1)}$ (i.e., $k\neq 2q-2$), we have  $$G \in \{K_{4, 3\star (q-1), 1 \star (q-1)}, K_{3\star q, 2, 1 \star (q-2)}\} \ \text{ or } k \ge 2q.$$
 	
%	By Lemma \ref{lem-noncommoncolour}, $|\XS \cap P_i| \le 1$ for $i > q$. 	
	%If $\XS \subseteq Z$, then let $i = \max \{j: z_j \in \XS\}$, we have  $|\YS| \ge |\LS(z_i)| \ge i \ge |\XS|$ (by Observation \ref{obs-s}), a contradiction.   
	%Therefore, $\XS -Z \ne \emptyset$. Let  $v \in \XS-Z$. Then   $|\YS| \ge |\LS(v)| =|L(v)| \ge k$, and hence $|\XS| \ge k+1$. 
% This in turn implies that $|\XS \cap P'_i| \ge 2$ for some $i$. As for any $i$ and any 2-subset $A$ of $P'_i$, $|\LS(A)| \ge k+1$, we conclude that $$|\YS| \ge k+1, |\XS| \ge k+2.$$ 
%	Assume $P$ is a 4-part. If $C_{P,3}=\emptyset$, then $|C_{P,2}|\ge 2k-1$. If $|C_{P,3}|=1$, then $|C_{P,2}|\ge 2k-3$. 
Note that   $\XS$ contains at most one vertex of any 2-part. Combining with Claim \ref{clm-ddd}, we have
	\begin{equation*}
	\label{eq-6}
	|\XS|\le k+2q-i_0. 
	\end{equation*}

  \begin{claim}
  \label{clm-222}
     For any $i  \ge i_0+1$, $|\XS \cap P_i| \le 1$. 
  \end{claim} 
	\begin{proof}
 If $i\ge q+1$, then $P_i$ is $2^-$-part and hence $|P_i\cap \XS|\le 1$ (by Lemma \ref{lem-noncommoncolour} and (\ref{eq-2k}). 
 
 Assume $i_0+1\le i\le q$.

First we prove that $|\XS \cap P_i| \le 2$.
		Assume to the contrary that $|\XS \cap P_i| = 3$ for some $i \ge i_0+1$. Assume $ P_i=\{u_i,v_i, w_i\}$. Then 
		\begin{equation*}
		\begin{aligned}
		|\YS| &\ge |L(P_i)| = |L(u_i)|+|L(v_i)|+|L(w_i)| \\
		& - (|L(u_i \wedge v_i)| + |L(u_i) \cap L(w_i)| + |L(v_i) \cap L(w_i)|) \\
		& \ge 3k-3i_0.
  		\end{aligned}
		\end{equation*} 
		Hence $k+2q-i_0 \ge |\XS| \ge 3k-3i_0+1$, which implies that $2k+1 \le 2q+2i_0\le 4q$. As $k\ge 2q-1$, we have $k=2q-1$. Hence $q=i_0$, in contrary to $i_0+1\le i\le q$.
		
		Since $|\XS \cap P_i| \le 2$ for all $i \ge i_0+1$, we know that $|\XS| \le k+q$ (by Claim \ref{clm-ddd}).
		
		If 	   $|\XS \cap P_i| = 2$ for some $q \ge i \ge i_0+1$, then  $|\YS| \ge 2k-i_0$. Hence   $ k+q \ge |\XS| \ge 2k-i_0+1$, which implies that  $k = 2q-1$ and $i_0=q$, again in contrary to $i_0+1\le i\le q$. 
	\end{proof}
 
	It follows from  Claim \ref{clm-ddd} and Claim \ref{clm-222} that  $|\XS| \le k+i_0$ and hence $|\YS| \le k+ i_0-1$.

 Thus $|\XS \cap P'_i| \le 1$ for any $  a+1 \le i\le i_0$.  Combining with Claim \ref{clm-222}, we know that  $|\XS \cap P'_i| \le 1$ for any $i \ge a+1$.
 Since $|\XS| \ge k+2$ (by (\ref{eq-k+1-2})), it follows from Claim \ref{clm-ddd} that $a=2$ and 
 $|\XS \cap P'_{i}| =2$ for $i=1,2$, and

	\begin{equation}
	\label{eq-ys}
 |\XS| = k+2, |\YS|=k+1 \ \text{  and } 	\YS = \LS( \XS \cap P'_{1}) =\LS(\XS \cap P'_{2}).
	\end{equation}

For $i=1,2$, assume  $P_i=\{u_i,v_i,x_i,y_i\}$.

If $\Lambda_{P_2,3} \ge 2$, then $|S_2|=3$, say $S_2=\{u_2,v_2,x_2\}$. Then $|\YS|\ge |\LS(z_2)|+|L(P_2-S_2)|\ge k+2$, a contradiction.

Assume $\Lambda_{P_2,3} \le 1$. Then  (ii) or (iii) holds, and $|S_2|=2$, say $S_2=\{u_2,v_2\}$.
If $z_2 \in \XS$, say $P'_2\cap \XS=\{z_2,x_2\}$, then $|\YS|\ge |\LS(z_2)\cup L(x_2)|\ge k+2$ (by Claim \ref{clm-000}),  contrary to (\ref{eq-ys}).

Assume $z_2 \notin \XS$. Then  $x_2,y_2 \in \XS$.  Now $|L(x_2 \vee y_2)| \le |\YS| = k+1$ implies that $|L(x_2 \vee y_2)|=k+1$ and $|L(x_2 \wedge y_2)|=k-1$. 
This implies that $\Lambda_{P_2,2}=k-1$ and hence $|L(u_2 \wedge v_2)|=k-1$. 
As $k\ge 8$, i.e., $\Lambda_{P_2,2}=k-1\ge 7$, it follows from Claim \ref{clm-000} that  $| L(x_2\wedge y_2)| = \Lambda_{P, 2}\ge 2$ and $|L(x_2\wedge y_2) \cup L(v)|\ge k+2$ for any $v\in P_2-\{x_2,y_2\}$.

If (ii) holds, say $S_1=\{u_1,v_1,x_1\}$, then $L(u_2\vee v_2)=\LS(z_1)\cup L(y_1)$. This implies that $L(x_2\vee y_2)=\LS(z_1)\cup L(y_1)$, for otherwise, by 
see (ii), we should have chosen $S_2=\{x_2,y_2\}$.   So $|L(P_2)|=k+1$. Hence 
\begin{eqnarray*}
    2k-2 &=&  |L(x_2\wedge  y_2)|+| L(u_2 \wedge v_2)| \\
    &=& |L(x_2\wedge  y_2) \cap L(u_2 \wedge v_2)| + |L(x_2\wedge  y_2) \cup L(u_2 \wedge v_2)|  \\
    &\le&  |L(x_2\wedge  y_2) \cap L(u_2 \wedge v_2)|  + k+1.
\end{eqnarray*}  This implies that $L(x_2\wedge  y_2) \cap L(u_2 \wedge v_2) \ne \emptyset$, in contrary to Lemma \ref{lem-noncommoncolour}. 

Assume (iii) holds, and for $i=1,2$, $P_i=\{u_i,v_i, x_i,y_i\}$ and $S_i=\{u_i,v_i\}$. 
If $z_i\in \XS$ for some $i=1,2$, then by Claim \ref{clm-000}, $|\LS(z_i)| \ge 2$ and hence $|\YS| \ge |\LS(P'_i)| \ge k+2$, contrary to (\ref{eq-ys}). 

Assume $z_1, z_2 \notin \XS$. Then again by the choice of $S_2$, we have $L(u_2 \vee v_2) = L(x_1 \vee y_1) = L(x_2 \vee y_2)$,  $|L(x_2 \wedge y_2)| =|L(u_2 \wedge v_2)| = k-1$, and $|L(P_2) | =k+1$. This leads to the same contradiction. 
	This completes  the proof of 
	Theorem \ref{thm-main2}.

		It was proved in \cite{SHZWZ2008} that   $K_{6, 2 \star (k-3), 1 \star 2}$ is
		$k$-choosable. Combining  with Theorem \ref{thm-main2}, we conclude that 
		
		\begin{equation}
			\label{eq-v}
			p_1 \ge 3, \  \ p^+_3 \le p_1-1,  \ \  3p_3^++2p_2+p_1 \le |V|-3. 
		\end{equation}

		\section{Pseudo-$L$-colouring}

		 As described in Section \ref{sec-intro}, our strategy for proving Theorem \ref{thm-main} is to partition  $V(G)$ into a family $\mathcal{S}$ of independent sets, so that either there is a matching $\MS$ in the bipartite graph $\BS$ that covers $V(\GS)$ and hence produce an $L$-colouring of $G$, or using Hall's Theorem to produce a good partial $L$-colouring of $G$ that leads to an $L$-colouring of $G$ by using induction. The partition $\mathcal{S}$ is obtained by constructing a proper colouring $f$ of $G$, and the parts in $\mathcal{S}$ are the colour classes of $f$. For this strategy to succeed, the colouring $f$ needs to have some nice property. In this section, we define the concept of pseudo-$L$-colouring of $G$, and study properties of the partition $\mathcal{S}$ of $V(G)$ induced by such colourings.

		\begin{definition}
			\label{def-pseudo}
			A {\em pseudo   $L$-colouring} of $G$ is a proper colouring $f$ of $G$   such that $f(v) \in C_L$ for every vertex $v$, and if $f(v)=c \notin L(v)$, then   $f^{-1}(c)=\{v\}$ is a singleton $f$-class.  
		\end{definition} 
		
		In a pseudo $L$-colouring $f$ of $G$, if    $f(v) \not\in L(v)$, then we say $v$ is {\em badly $f$-coloured} (or {\em badly coloured} if $f$ is clear from the context).

		By Observation \ref{obs-0}, if $f$ is a   pseudo-$L$-colouring  of $G$, then there is a pseudo-$L$-colouring   $g$ of $G$ such that  $g(G)=C_L$ and for every badly $g$-coloured vertex $v$ of $G$, $g(v)=f(v)$. In the following, we may assume that 
		all the pseudo-$L$-colourings $f$ satisfy $f(G)=C_L$. 
		However, when we construct a pseudo-$L$-colouring $f$ of $G$, we do not need to verify that $f(G)=C_L$ (because if $f(G) \ne C_L$, then we change it to the pseudo-$L$-colouring $g$ described above).

		\begin{definition}
			\label{def-bf}
			Assume  $f$ is a pseudo   $L$-colouring of $G$. Let 
			${\mS}_f$ be the family of   $f$-classes, which is a partition of $V(G)$, {i.e., ${\mS}_f=\{f^{-1}(c):c\in C_L\}$ where $f^{-1}(c)$ is the set of all vertices coloured by $c$ under $f$.}  We denote $G/\mathcal{S}_f$, $L_{\mathcal{S}_f}$, $B_{\mathcal{S}_f}$,  $X_{\mathcal{S}_f}$ and  $Y_{\mathcal{S}_f}$ by $G_f$,   $L_f$,
			$B_f$,  $X_f$ and  by $Y_f$, respectively.  
		\end{definition}
		 \begin{figure}[h]
		\centering	\includegraphics[width=4in]{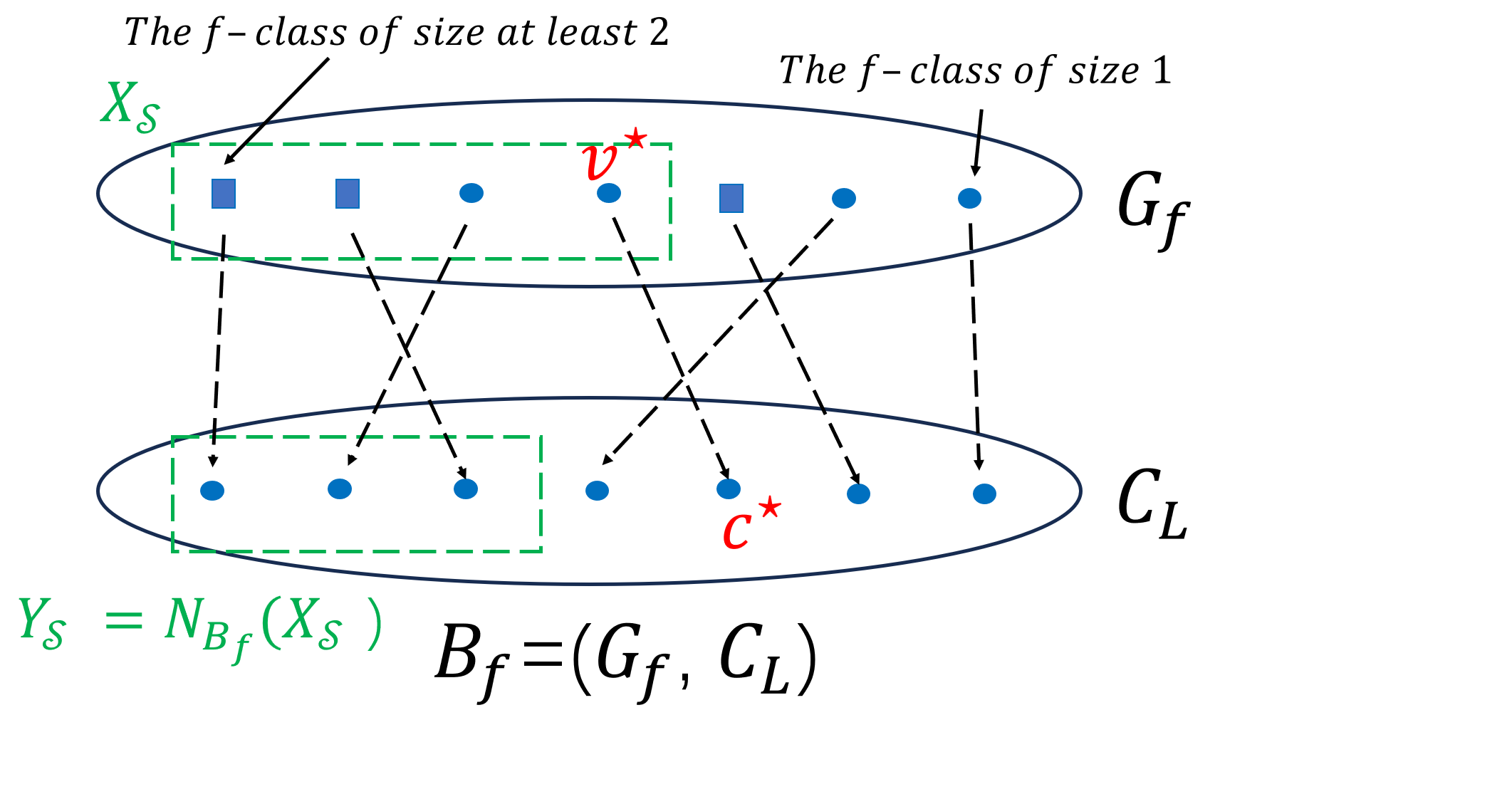}
\caption{The bipartite graph $B_f$ with partite sets $G_f$ and $C_L$. Vertices in $G_f$ are $f$-classes, some of them are singleton classes represented by solid circles, and other are $2^+$-classes, represented by solid squares. The broken arrowed line indicate the colouring $f$. The edges of $B_f$ are not drawn, and $\YS  = N_{\BS}(\XS)$. Vertex $v^*$ is contained in $\XS$ but $f(v^*)=c^* \notin \YS$. So $v^{\star}$ is a badly $f$-coloured vertex.}
\label{Figure-2}
	\end{figure}
		In the remainder of this section, assume  $f$ is a pseudo   $L$-colouring of $G$. 
  %An $f$-class is also called an {\em $f$-class}.  
  In the graph $G_f$,  $f^{-1}(c)$ is identified into a single vertex. For simplicity, we denote this vertex  by $f^{-1}(c)$. 
		So $f^{-1}(c)$ is both a subset of $V(G)$ and a vertex of $G_f$.  It will be clear from the context which one it is.
		
		Since $|X_f| > |Y_f|$, there is a colour class $f^{-1}(c) \in X_f$ such that $c \notin Y_f$. Hence $f^{-1}(c)$ is a singleton $f$-class $\{v\}$ and $v$ is badly coloured by $f$.
		
		For a subset $Q$ of $V(G_f)$, let $V(Q)$ be the subset of $V(G)$ defined as 
		$$V(Q) = \bigcup_{f^{-1}(c) \in Q} f^{-1}(c).$$
		
		Let $\ell$ be the number of $f$-classes $f^{-1}(c)$  of size $|f^{-1}(c)| \ge  2$. As $|V| > |C_L|$, $\ell \ge 1$. On the other hand,  $\lambda = |V|-|C_L| \ge \ell $ and equality holds if and only if $f(G)=C_L$ and each $f$-class has size at most 2.

		Recall that  there is a matching $M_{\mathcal{S}_f}$ in $B_f - (X_f \cup Y_f)$   that covers all vertices in $V(G_f) - X_f$.
		The matching $M_{\mathcal{S}_f}$ defines a partial $L$-colouring of $G[ \bigcup_{f^{-1}(c)\notin X_f}f^{-1}(c)]$ that colours vertices in $f^{-1}(c)$ with $c'$, where $\{c', f^{-1}(c)\} $ is an edge in $M_{\mathcal{S}_f}$. We denote this partial $L$-colouring of $G$ by $\psi_f$. The matching $M_{\mathcal{S}_f}$ maybe  not unique. In this case, we let $M_{\mathcal{S}_f}$ be an arbitrary matching that covers  $V(G_f) - X_f$.
		
		We extend $\psi_f$ to a partial $L$-colouring $\phi_f$ of $G$ by colouring each $f$-classes $f^{-1}(c) \in X_f$ of size at least 2 by colour $c$.  By definition of pseudo-$L$-colouring, for such an $f$-class $f^{-1}(c)$, $c \in \LS(f^{-1}(c))$. So $\phi_f$ is  a proper $L$-colouring of $G$.
		Denote by $X$ the set of vertices of $G$ coloured by $\phi_f$. Note that only those $f$-classes $f^{-1}(c)$ of size at least 2 contained in $X_f$ are coloured by colours from $Y_f$. So $$|\phi_f(X) \cap Y_f| \le \ell.$$
		
		If $G-X$ has an $L^{\phi_f}$-colouring $\theta$, then $\phi_f \cup \theta$ would be an $L$-colouring of $G$. Thus $G-X$ is not $L^{\phi_f}$-colourable.

		\begin{lemma}
			\label{lem-added1}
			$V_f-X_f$ contains at most $ \lambda-1 $ singletons of $G$. Moreover, if $V_f-X_f$ contains  $ \lambda-1\ge 1 $ singletons of $G$, then $\ell=\lambda$ and the following hold:
			\begin{itemize}
				\item[(1)] All $f$-classes have size 2 or 1, and there are exactly $\ell$ $f$-classes of size 2.
				\item[(2)] All the $\ell$  $f$-classes of size 2 are contained in $X_f$.
				\item[(3)] For each non-singleton part $P$ of $G$,  
				there is a singleton $f$-class $\{v\}\in X_f$ such that $v \in P$.   
				\item[(4)] If $f$ has exactly one badly coloured vertex, then $|Y_f|\ge k+1$.
			\end{itemize}
		\end{lemma}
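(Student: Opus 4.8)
The plan is to work entirely with the reduced instance $(G',L')$ constructed just before the statement, where $G'=G-A$ has no $L'$-colouring, and to convert everything into a colour count. First I would record exactly what $g$ consumes: the classes of $B$ are coloured by their own colours, which lie in $Y_f$ and number $|B|$, while the classes of $V_f-X_f$ are coloured through $M_{\mathcal{S}_f}$ by distinct colours of $C\setminus Y_f$, numbering $|V_f|-|X_f|$. Since we may assume $f(G)=C$, so that $|V_f|=|C|=2k+2-\lambda$, this shows $g$ uses exactly $|C|-m$ colours, where $m:=|X_f|-|B|=|V(G')|$; hence $L'$ is confined to the remaining set of exactly $m$ colours and $|L'(v)|\ge m+\lambda-k-2$ for every $v\in V(G')$. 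Because each vertex of $G'$ is a singleton class of $X_f$, its list lies in $Y_f$, so $\bigcup_{v\in V(G')}L'(v)\subseteq Y_f\setminus g(A)$; and since $G'$ is a complete multipartite graph that is not $L'$-colourable, the Kierstead inequality $|\bigcup_v L'(v)|<|V(G')|$ (cf.\ Observation \ref{obs-0}(1)) gives a strict colour deficiency on $G'$.

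For the bound I let $N$ be the number of singletons of $G$ lying in $V_f-X_f$ and argue by contradiction, assuming $N\ge\lambda$. Let $\{w_1\},\dots,\{w_N\}$ be these singleton $1$-parts, coloured by $g$ with distinct colours $c_1,\dots,c_N\in C\setminus Y_f$, and put them back: set $\hat G=G-(A\setminus\{w_1,\dots,w_N\})$ and $\hat L(v)=L(v)\setminus g(A\setminus\{w_1,\dots,w_N\})$. Then $\hat G$ cannot be $\hat L$-colourable, since such a colouring would combine with $g$ on the rest of $A$ to colour $G$. Now $\hat G$ has $m+N$ vertices, $\hat L$ is confined to the $m+N$ colours $(C\setminus g(A))\cup\{c_1,\dots,c_N\}$, the lists remain correspondingly large, and $w_1,\dots,w_N$ form a clique of $1$-parts with each $w_i$ retaining $c_i$. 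The idea is that once $N\ge\lambda$ the retained colours, together with the confinement to $m+N$ colours, make the total palette witnessed by the lists reach $|V(\hat G)|$, so that the Kierstead inequality (equivalently the Noel--Reed--Wu Theorem applied to the complete multipartite $\hat G$) forces an $\hat L$-colouring, a contradiction; concretely, the strict deficiency obtained for $G'$ in the first step is exactly compensated by the $N\ge\lambda$ restored colours, and tracking this compensation colour-by-colour is the crux.

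For the ``moreover'' part I would read the structure off the equality case $N=\lambda-1$ by forcing every inequality used in the bound to be tight. Tightness of $\lambda\ge\ell$ (the number of classes of size $\ge2$) forces every class of size $\ge2$ to have size exactly $2$, which is (1); tightness of the step assigning the $B$-colours forces all size-$2$ classes into $X_f$, i.e.\ into $B$, giving (2) and $|B|=\lambda$. For (3) I would use the covering count: the $k$ parts of $G$ split into the $N=\lambda-1$ restored singletons and the parts meeting $V(X_f-B)=V(G')$, and tightness leaves no part that is neither, so each part is either a singleton of $V_f-X_f$ or meets $V(X_f-B)$. For (4) I would begin from the baseline $|Y_f|\ge k$ (by the same matching/Hall argument as in Claim \ref{clm-k+1}) and improve it by one: if $v$ is the unique badly coloured vertex then $f(v)\notin L(v)$ while the colour $f(v)$ is still used somewhere (as $f(G)=C$), so $L(v)$ contributes $k$ colours to $Y_f$ together with one further colour forced by the bad colouring, yielding $|Y_f|\ge k+1$.

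The step I expect to be the main obstacle is the middle one: verifying that $N\ge\lambda$ genuinely saturates the palette of $\hat G$. The split of $C$ into $Y_f$ and $C\setminus Y_f$, the $|X_f|-|Y_f|\ge1$ colours of $C\setminus Y_f$ left unmatched by $M_{\mathcal{S}_f}$, and the restored colours $c_1,\dots,c_N$ all have to be accounted for precisely, and the strict Kierstead deficiency coming from $G'$ must be matched exactly against the restored colours rather than merely bounded; once this counting chain is established with its equalities identified, deducing (1)--(4) is comparatively routine.
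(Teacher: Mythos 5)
Your argument has a genuine gap at its centre. The inequality you call the Kierstead inequality, $|\bigcup_{v\in V(G')}L'(v)|<|V(G')|$, is not a property of an arbitrary non-colourable pair: Observation \ref{obs-0}(1) holds for $(G,L)$ only because the counterexample was chosen with $|\bigcup_{v\in V}L(v)|$ minimum, and that minimality is destroyed when you pass to $(G',L')$ or to your restored pair $(\hat G,\hat L)$. It is also false as a general statement: three mutually adjacent vertices (a complete multipartite graph with three $1$-parts) with lists $\{1\},\{1\},\{2,3,4,5\}$ are not colourable, yet the colour union has size $5>3$. Nor is it ``equivalent'' to the Noel--Reed--Wu theorem, whose hypotheses (lists of size at least $t$, chromatic number at most $t$, at most $2t+1$ vertices) you never verify for $\hat G$. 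Consequently the step you yourself flag as the crux --- that the $N\ge\lambda$ restored colours ``exactly compensate'' the deficiency, to be tracked colour-by-colour --- is not a computation you have performed; it is the entire missing proof. The paper closes exactly this hole by a different and sharper list bound: since every vertex of $G'$ is a singleton $f$-class in $X_f$, its list lies in $Y_f$, while the matching $M_{\mathcal{S}_f}$ uses only colours outside $Y_f$; hence $g$ removes at most $\ell$ colours (those of the classes in $B$) from any list of $G'$, giving $|L'(v)|\ge k-\ell$, far stronger than your bound $m+\lambda-k-2$. Combined with $\chi(G')\le k-\lambda\le k-\ell$ (the $\lambda$ singleton parts would be deleted whole) and $|V(G')|\le 2k+2-2\ell-\lambda\le 2(k-\ell)+1$, the Noel--Reed--Wu theorem colours $G'$, the required contradiction; no restoration of the singletons is needed.

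The ``moreover'' clauses are likewise not equality cases of one count. The paper proves (1), (2) and (3) by three separate Noel--Reed--Wu applications with re-adjusted parameters: for (1), a class of size at least $3$ forces $\lambda-1\ge\ell$, so $\chi(G')\le k-(\lambda-1)\le k-\ell$ still works; for (2), a size-$2$ class outside $X_f$ is matched to a colour outside $Y_f$, improving the list bound to $k-\ell+1$ while $|V(G')|\le 2(k-\ell+1)$; for (3), a non-singleton part inside $A$ again gives $\chi(G')\le k-\lambda$. Your ``tightness'' reading supplies none of these verifications. Most seriously, your proof of (4) is incorrect: the bad colour $f(v^*)$ need not lie in $Y_f$ at all (indeed, in the application in Lemma \ref{lem-near} the bad colour $c^*$ satisfies $c^*\notin Y_f$), so it cannot serve as the ``one further colour'' beyond $L(v^*)\subseteq Y_f$; the fact that $f(G)=C$ only says $f(v^*)$ is used on $v^*$ itself. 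The paper instead assumes $|Y_f|=k$, deduces $L(v)=Y_f$ for every singleton class $\{v\}\in X_f-B$, performs a recolouring swap (give such a $v$, lying in the part of some class of $B$, that class's colour $c\in Y_f=L(v)$, and give the freed colour $f(v)\in Y_f=L(v^*)$ to $v^*$) to show that the part containing any class of $B$ lies wholly in $A$, and only then derives a Noel--Reed--Wu contradiction from $A$ containing $\lambda$ parts. None of this structure is recoverable from your sketch.
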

		\begin{proof}
			It follows from the definition of $\phi_f$ that for each vertex $v$ of $ G-X$,   $\{v\} \in X_f$ is a singleton $f$-class,  and $L(v) \subseteq Y_f$.   As $|\phi^{\phi_f}(X) \cap Y_f| \le \ell$,   
			\begin{equation*}
				|L^{\phi_f}(v)| \ge k-\ell, \forall v \in V( G-X).  \label{eqn2-3}
			\end{equation*} 
			If $G_f-X_f$ contains  $ \ell$ singletons of $G$,  
			then 
			\begin{equation*}
				\chi(G-X) \le  k- \ell \text{ and } 	|V( G-X)|\le 2k+2-2\ell-\lambda  \le 2(k-\ell)+1.   \label{eq2-2a}
			\end{equation*}
			By Noel-reed-Wu Theorem, $ G-X$ is $L^{\phi_f}$-colourable, a contradiction.   
			
			So  $G_f-X_f$ contains at most $ \ell-1$  singletons of $G$. 
			
			Assume  $G_f-X_f$ contains   $ \lambda-1 $  singletons of $G$. Since $\ell \le \lambda$, we have $\ell=\lambda$ and hence each $f$-class has size at most 2, and there are exactly $\ell$ $f$-classes of size 2,  i.e.,  (1) holds. 
			We shall prove that (2)-(4) hold. 
			
			(2): Assume  to the contrary that   there is an $f$-class of size $2$  not in $X_f$.  Then at most $\ell-1$ $f$-classes are coloured by colours from $\YS$. Hence $$|L^{\phi_f}(v)|  \ge k-(\ell-1), \forall v \in V( G-X).$$ As  	\begin{equation*}
				|V( G-X)|\le 2k+2-2\ell=2(k-\ell)+2= 2(k-\ell+1) \text{ and }  \chi(G-X)\le k-\ell+1, \label{eq2-5}
			\end{equation*} 
			$ G-X$  with   list assignment $L^{\phi_f}$ 
			satisfy the condition of Noel-Reed-Wu Theorem, and hence $ G-X$ has an $L^{\phi}$-colouring, a contradiction.
			
			(3): If  (3) does not hold, then there is a non-singletone part $P$ of $G$ such that allvertices of $P$ are colourd, i.e., 
			$P$ is a 
			one non-singleton part of $G$ are contained in $X$, and hence $\chi(G-X) \le k - \lambda = k -\ell$. 	We still have $ 
			|V( G-X)|\le 2k+2-2\ell-(\lambda-1) \le 2(k-\ell)+1$.
			Hence by   Noel-Reed-Wu Theorem,   $ G-X$ has an $L^{\phi_f}$-colouring, a contradiction. 
			
			%	It follows from  (1) that $\ell=\lambda$.

			%	Hence all $f$-classes of size $2$ are in $X_f$. 

		(4): Assume $v^*$ is the only badly coloured vertex. Then
	  $\{v^*\}$ is an $f$-class of size 1 in $X_f$. This implies that $|Y_f|\ge |L(v^*)| \ge k$. 
	  Assume to the contrary that $|Y_f|= k$. This implies that  for all singleton $f$-classes $\{v\}
   \in X_f$,   $L(v)=Y_f$.
		 
		Assume $f^{-1}(c) \in X_f$ is an $f$-class of size at least 2, and $P_i$   is the part of $G$ containing $f^{-1}(c)$.  As the size of $f^{-1}(c)$ is at least 2, $P_i$ is not singleton-part and hence it follows from (3) that there  is an $f$-class $\{v\} \in X_f$ such that $v \in P_i$.
  Thus,
		$L(v) = Y_f$,  $c \in L(v)$ and we can colour $v$ with colour $c$, and colour $v^*$ with $f(v)$. The resulting colouring is a pseudo $L$-colouring  of $G$ with no badly coloured vertices, i.e., an $L$-colouring of $G$, a contradiction.
		
		This completes the proof of Lemma \ref{lem-added1}.
		\end{proof}

		\begin{lemma}
			\label{lem-added}
			Assume $\lambda \ge 2$ and  $G$ has at most $\lambda-1$ singletons. Then $G_f-X_f$  contains at most $\lambda-2$ singletons of $G$.
		\end{lemma}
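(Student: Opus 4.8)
The plan is to argue by contradiction. Suppose $V_f-X_f$ contains $\lambda-1$ singletons of $G$; since Lemma~\ref{lem-added1} already forbids more than $\lambda-1$, this is the extremal case, so conclusions (1)--(3) of Lemma~\ref{lem-added1} are available. Because $G$ has at most $\lambda-1$ singletons in total, these $\lambda-1$ singleton $f$-classes in $V_f-X_f$ must be precisely \emph{all} singleton parts of $G$, so $p_1=\lambda-1$; combined with $p_1\ge 3$ from \eqref{eq-v} this forces $\lambda\ge 4$. The goal is then to show that $G'=G-A$, which we know has no $L'$-colouring, is in fact $L'$-colourable, yielding the contradiction.

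First I would pin down the parameters of $G'$. By conclusion (3), every non-singleton part of $G$ retains a vertex of $V(X_f-B)\subseteq V(G')$, so no such part is destroyed and $\chi(G')=k-(\lambda-1)=k-\lambda+1$. Each surviving vertex $v$ has its class in $X_f$, hence $L(v)\subseteq Y_f$; the matched colours of $g$ lie outside $Y_f$, so the only colours of $g(A)$ meeting $L(v)$ are the $\ell=\lambda$ common colours used on the size-$2$ classes $B$. Thus $|L'(v)|\ge k-\lambda$, and every list is contained in the pool $Y'$ obtained from $Y_f$ by deleting those $\lambda$ colours, so $|Y'|=|Y_f|-\lambda$. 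Since $V(G')=V(X_f-B)$ consists of the $|X_f|-\lambda$ singleton classes of $X_f$, we get $|V(G')|=|X_f|-\lambda$, and $|X_f|>|Y_f|$ shows $G'$ has strictly more vertices than available colours. A part count using $\lambda\ge 4$ then gives at least $\lambda-1\ge 3$ singleton parts in $G'$.

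The argument splits on $|Y_f|$, and I expect the central obstacle to be an off-by-one: $G'$ carries $k-\lambda+1$ parts while its lists are only guaranteed size $k-\lambda$, so $G'$ need not be colourable on list-size grounds alone, and the whole point is to remove one part of $G'$ \emph{for free}. If $|Y_f|=k$, then $|Y'|=k-\lambda$ forces $L'(v)=Y'$ for every $v\in V(G')$; by Observation~\ref{obs-0}(3) no part of $G'$ can then contain two vertices, so $G'=K_{k-\lambda+1}$, and I would release one of the $\lambda$ reserved colours by recolouring a single size-$2$ class, after which this clique can be coloured. If $|Y_f|\ge k+1$, the pool satisfies $|Y'|\ge k-\lambda+1=\chi(G')$, and I would exploit this surplus together with the $\ge 3$ singleton parts of $G'$ to colour one singleton part with a colour not needed elsewhere; deleting its part lowers $\chi(G')$ to $k-\lambda$ while keeping all remaining lists of size $\ge k-\lambda$ and the vertex count below $2(k-\lambda)+1$, so the Noel--Reed--Wu Theorem colours the remainder. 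The delicate points will be verifying that such a free part always exists and confirming that the reduced graph is not one of the two exceptional graphs barred from the Noel--Reed--Wu Theorem.
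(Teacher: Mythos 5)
Your reductions up to the colouring steps are correct and verifiable: in the extremal case you rightly get $p_1=\lambda-1$, conclusions (1)--(3) of Lemma~\ref{lem-added1}, $\ell=|B|=\lambda$, $|V(G')|=|X_f|-\lambda$, $|L'(v)|\ge k-\lambda$ with $L'(v)\subseteq Y'$, and (via $|V(G')|\le 2k+3-3\lambda$) at least $\lambda-1\ge 3$ singleton parts of $G'$. The gap is in the two colouring mechanisms that are supposed to deliver the contradiction, and in your main case it is fatal. For $|Y_f|\ge k+1$ you need a singleton part $\{w\}$ of $G'$ and a colour $c^*\in L'(w)$ appearing in no other $L'$-list; you flag this as ``delicate'' but give no proof, and in fact it generally fails. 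Note that for $c^*\in Y'$, membership in $L'(v)$ coincides with membership in $L(v)$ (the colours removed by $g$ are either the $\lambda$ colours on $B$ or matched colours outside $Y_f$). So already when $|Y_f|=k+1$, each $v\in V(G')$ has $L(v)\subseteq Y_f$ missing at most one colour of $Y_f$, and a free pair $(w,c^*)$ would force \emph{every} other vertex of $G'$ to have $L(v)=Y_f-\{c^*\}$ --- an extremely degenerate configuration (by Observation~\ref{obs-0}(3) it almost always collides with two such vertices sharing a part). Outside that configuration no free pair exists, and your argument has no fallback: lists of size $k-\lambda$ against $\chi(G')=k-\lambda+1$ do not meet the Noel--Reed--Wu hypothesis, which is exactly the off-by-one you identified but did not resolve. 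Your Case 1 mechanism is also unjustified as stated: to ``release'' the colour $c$ of a size-$2$ class you must recolour both of its vertices, yet $g$ already occupies $k+1$ colours ($\lambda$ on $B$ plus $k+1-\lambda$ matched) while $|L(u_i)|$ may equal $k$, so their lists can be exhausted. (That case is repairable by a different move: the class lies in a $2^+$-part $P$, which by (3) keeps a vertex $w$ in $G'$ with $L(w)=Y_f\ni c$, so one may \emph{reuse} $c$ on $w$ rather than free it.) Incidentally, your worry about exceptional graphs in Noel--Reed--Wu is moot: that theorem, as used in the paper, has no exceptions; the exceptions belong to Theorem~\ref{thm-main}.

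The paper's proof avoids colouring $G'$ altogether and instead counts, landing on the structural inequality (\ref{eq-v}) that your route never invokes beyond $p_1\ge 3$. Concretely: Lemma~\ref{lem-added1}(3) and $|B|=\lambda$ give $|V(X_f)|\ge 2\lambda+(k-\lambda+1)=k+\lambda+1$; no $2$-part $P$ can lie inside $X_f-B$, since then $2k\le |L(P)|\le |Y_f|<|C|\le 2k$ by Lemma~\ref{clm-nocommoncolour}, so $|V(V_f-X_f)|\ge \lambda-1+p_2$; summing, $2k+2\ge 2p_1+2+k+p_2$, hence $p_3^+\ge p_1$, contradicting $p_3^+\le p_1-1$ from (\ref{eq-v}). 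Since (\ref{eq-v}) encodes precisely the graphs excluded by Theorem~\ref{thm-main2} (and the known result on $K_{6,2\star(k-3),1\star 2}$), it is doubtful your colouring-based plan can close without importing it; as written, the central existence claim of your main case is asserted, not proved, and is false in configurations your argument must handle.
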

		\begin{proof}
			If $G$ has at most $\lambda-2$ singletons, then the conclusion is trivial. Assume $G$ has exactly $\lambda-1$ singletons (i.e., $p_1=\lambda-1$), and assume to the contrary   that all the $\lambda-1$ singletons of $G$ are contained in $G_f-X_f$. By (3) of Lemma \ref{lem-added1}, for each of the $k-\lambda+1$ $2^+$-parts $P$ of $G$,  $X_f$ has a singleton $f$-class $\{v\}$ with $v \in P$.	By Lemma \ref{lem-added1}, we have $\ell=\lambda$. By (2) of  Lemma \ref{lem-added1}, all the $\ell$ $f$-classes  of size 2 are contained in $X_f$. 
			Thus 
			\begin{equation}
				\label{eq-vx}
				|V(X_f)|\ge 2\ell+k-\lambda+1 =  \lambda+k+1.
			\end{equation}
			
			If a 2-part $P$ of $G$ is contained in $  V(X_f)$, then   $L(P)\subseteq Y_f$. By  Lemma \ref{lem-noncommoncolour},    
			$$2k\le |L(P)|\le |Y_f|.$$
			This contradicts to the fact that $|Y_f| < |C_L| = |V|-\lambda \le 2k$. 
			
			Thus for each 2-part $P$ of $G$, $|P \cap V(G_f-X_f)|\ge 1.$ (Note that a 2-part has no common colour in the lists of its vertices, so $P$ is not an $f$-class). 
			Hence 
			\begin{equation}
				\label{eq-vf}
				|V(G_f-X_f)|\ge  \lambda-1+p_2.
			\end{equation}
			As $p_1=\lambda-1$, it follows from (\ref{eq-vx}) and (\ref{eq-vf}) that 
			$$
			2k+2 = |V| =|V(X_f)| +|V(G_f-X_f)| \ge (\lambda+k+1)+ (\lambda-1+p_2)=2\lambda+k+p_2 = 2p_1+2 +k +p_2.$$ 
			So $$p_3^++p_2+p_1 = k \ge 2p_1+p_2,$$
			which implies that $p_3^+ \ge p_1$,   in contrary to (\ref{eq-v}).

			This completes the proof of Lemma \ref{lem-added}.
		\end{proof}

		\section{Near acceptable colourings}
		%\red{An innovative idea introduced in \cite{NWWZ2015} is to use a near acceptable $L$-colouring   to derive a proper $L$-colouring of $G$.

We have shown in the previous section that the partition $\mathcal{S}$ of $V(G)$ induced by a pseudo-$L$-colouring of $G$ has some nice properties. However, for the proof of Theorem \ref{thm-main}, one more restriction need to be added to a pseudo-$L$-colouring.
In this section, we define the concept of 
 near acceptable $L$-colouring of $G$, and prove that the partition $\mathcal{S}$ of $G$ induced by a near acceptable $L$-colouring of $G$ enables us to construct a proper $L$-colouring.

			\begin{definition}
				A colour $c$ is called {\em frequent} if one of the following holds:
				\begin{itemize}
					\item[(1)] $|L^{-1}(c)| \ge k+2$.
					%,   in which case $c$ is called {\em globally frequent},  
					\item[(2)] $|L^{-1}(c) \cap T| \ge \lambda$. 
					%or $c$ is contained in at least $\lambda$ singleton lists, in which case $c$ is called {\em frequent among singletons}.
					\item[(3)] $|T|=\lambda-1\ge 1$ and $T \subseteq L^{-1}(c)$.
					%$c$ is contained in $\lambda-1$ singleton lists, in which case $c$ is called {\em special frequent}.
					%\item[(4)] $|L^{-1}(c) \cap S| \ge \max\{1, \lambda-1\}$ and $|L^{-1}(c)| \ge k+1$.
				\end{itemize}   
			\end{definition}

			\begin{definition}
				A pseudo $L$-colouring $f$ of $G$ is    {\em near acceptable}   if   each badly coloured vertex is coloured by a frequent colour. 
			\end{definition} 
			
			The concept of near acceptable $L$-colouring was first used in \cite{NRW2015} for the proof of Noel-Reed-Wu Theorem. 
			For the proof of Theorem \ref{thm-main}, as $G$ has one more vertex, the  definition of frequent colours is different from that in \cite{NRW2015}. Thus the near acceptable $L$-colouring in this paper is different from the one  in \cite{NRW2015}.  The difference makes it more difficult to find a near acceptable $L$-colouring of $G$.
			Nevertheless, we shall show that analog to \cite{NRW2015}, the existence of a near acceptable $L$-colouring of $G$ implies the existence of an $L$-colouring of $G$. 
   
			\begin{lemma}
				\label{lem-near}
				$G$ has no near acceptable $L$-colouring. 
			\end{lemma}
			\begin{proof}
				Assume to the contrary that $f$ is a near acceptable $L$-colouring of $G$.
				Since $|X_f| > |Y_f|$,   there is a colour class $f^{-1}(c^*)\in X_f$ with $c^*\notin Y_f$. 
				Hence $f^{-1}(c^*)=\{v^*\}$ is a badly coloured singleton $f$-class.   
				
				Since $f^{-1}(c^*)=\{v^*\}\in X_f$, we have $L(v^*) \subseteq Y_f$, and hence  $$k\le |L(v^*)|\le|Y_f|<|X_f|.$$ 
				On the other hand,  $c^*\notin Y_f$ implies that for each $f^{-1}(c) \in X_f$, there exists $v \in f^{-1}(c)$, such that $c^* \notin L(v)$.    Thus $$|L^{-1}(c^*)| \le 2k+2-|X_f| \le k+1.$$
				So $c^*$ is not a frequent colour of Type (1).

				By Lemma \ref{lem-added1}, $V_f-X_f$ contains at most $ \lambda-1 $ singletons of $G$. Hence  $$|L^{-1}(c^*)\cap T|\le \lambda-1.$$
				So $c^*$ is not a frequent colour of Type (2).
				
				If $|T|=\lambda-1\ge 1$, then by Lemma \ref{lem-added}, $| L^{-1}(c^*) \cap T| \le |V(V_f-X_f) \cap T| \le \lambda-2$. Hence $T \not\subseteq L^{-1}(c^*)$. 
				So $c^*$ is not a frequent colour of Type (3).
				
				%	If $|L^{-1}(c^*) \cap S| \ge \max\{1, \lambda-1\}$, then $V_f-T$ contains $\max\{1,\lambda-1\}$ singletons of $G$ (as  $c^*\notin N_{B_f}(T)$).	By Lemma \ref{lem-added1},  $|N_{B_f}(T)|\ge k+1$ and hence $|T|\ge k+2$. This implies that  $|L^{-1}(c^*)| \le 2k+2-|T| \le k$. 	So $c^*$ is not a frequent colour of Type (4).

				Therefore, $c^*$ is not frequent, a contradiction.
			\end{proof}
			
			\section{Upper bound on the number frequent colours}
			\label{sec-kfrequent}
			
			This section proves that there are at most $k-1$ frequent colours. 
   Assume to the contrary that there is a set $F$ of $k$ frequent colours. We will construct a near acceptable colouring $f$ of $G$ in the following three steps: 
   \begin{itemize}
       \item[(1)] Construct a partial $L$-colouring $f_1$ of $G$ using colours from $C_L-F$, that colours as many vertices as possible, and subject to this, the coloured vertices are distributed among the parts of $G$ as evenly as possible. Let $V_1$ be the set of vertices coloured by $f_1$.
       \item[(2)] Order the parts of $G$ as $P_1,P_2, \ldots, P_k$ so that $|P_i -V_1| \ge |P_{i+1} - V_1|$ for $i=1,2,\ldots, k-1$. Colour greedily in this order the vertices of $P_i-V_1$ by a common permissible colour from $F$, until this process cannot be carried out any more. This partial $L$-colouring will be denoted by $f_2$.
       Let $V_2$ be the set of vertices coloured by $f_2$.
       \item[(3)] Extend $f_1\cup f_2$ to a near acceptable $L$-colouring (for example, if for each remaining part $P_i$, $P_i -V_1$ contains at most one vertex, then we arbitrarily colour that vertex by a remaining colour from $F$ to  obtain a near acceptable $L$-colouring of $G$). 
   \end{itemize} 	 
   The difficult part is to prove that $f_1 \cup f_2$ can be extended to a near acceptable $L$-colouring. What we really proved is that if this cannot be done, then every part of $G$ is  a $3^-$-part, which is in contrary to Theorem \ref{thm-main2}.

			In the proof, we often need to modify a partial $L$-colouring. 	
			
   \begin{definition}
       \label{def-modify}
       Assume $f$ is a partial $L$-colourings  of $G$.   For distinct colours $c_1, c_2, \ldots, c_t \in C_L$, and   distinct indices $i_1, i_2, \ldots, i_t \in \{1,2,\ldots, k\}$,
			we denote by 
			$$f(c_1 \to P_{i_1}, c_2 \to P_{i_2}, \ldots, c_t \to P_{i_t})$$ the partial $L$ colouring of $G$ obtained from $f$ by  the following operation: 
			\begin{itemize}
				\item First, for $j=1,2,\ldots, t$, 
				uncolour vertices in $f^{-1}(c_j)$ (it is allowed that $f^{-1}(c_j)=\emptyset$, i.e., $c_j$ is not used by $f$).
				\item 	Second, for $j=1,2,\ldots, t$, colour vertices in $L^{-1}(c_j) \cap  P_{i_j} $ by colour $c_j$.
			\end{itemize}
   \end{definition}

   Now we are ready to prove the following lemma.
			
			%	It may happen that $i_j=i_l$. In the application below, in this case, we always have   $ L^{-1}(c_j) \cap L^{-1}(c_l) \cap P_{i_j} =\emptyset$.
			% In this case, we may arbitrarily colour $v$ with $c_j$ or $c_l$.  

   \begin{lemma}
       \label{lem-kfrequent}
       There are at most $k-1$ frequent colours.
   \end{lemma}
		\begin{proof}	
  Assume to the contrary that there is a set $F$ of $k$ frequent colours.
   A {\em valid} partial $L$-colouring $f$ of $G$ is a partial $L$-colouring of $G$ using colours from $C_L-F$.

   For a valid partial 
			$L$-colouring  $f$ of $G$, for $i=1,2,\ldots, k$,
			let 
			$$S_{f,i} = P_i \cap f^{-1}(C_L-F)$$
			be the set of coloured vertices in $P_i$.
			Let
			
			\begin{eqnarray*}
				\tau_1(f) &=&  \sum_{i=1}^k|S_{f,i}|,  \\
				%\tau_2 (f) &=& |\{i: |P_i|=2, |S_{f_1,i}|=1\}| , \\
				%\tau_3 (f) &=&   |\{i: |P_i| = |S_{f_1,i}| \le 2\}|, \\
				\tau_2(f) &=& \sum_{i=1}^k |S_{f_1,i}|^2.
			\end{eqnarray*}

			We choose  a valid partial $L$-colouring $f_1$ of $G$  such that $$\tau(f_1) = (\tau_1(f_1),  -\tau_2(f_1))$$ is lexicographically maximum, i.e., the number of coloured vertices $\tau(f_1)$ is maximum, and subject to this, $\tau_2(f) = \sum_{i=1}^k |S_{f_1,i}|^2$ is minimum, which means that the coloured vertices are distributed among the parts of $G$ as evenly as possible.
			%
			%\begin{enumerate}
			%	\item[(1)] $\tau_1(f_1)$ is maximum.
			%	\item[(2)] Subject to  (1), $\tau_2(f_1)$ is maximum.
			%	\item[(3)] Subject to  (1)  and (2), $\tau_3(f_1)$ is minimum.
			%	\end{enumerate}
		%
		%	In the following, we denote by $V_1, S_i, R_{f_1,i}$ the sets $V_f, S_{f,i}, R_{f,i}$, respectively.  

		%We call a 2-part $P_i$ with   $|S_{f,i}|=1$ a {\em nicely coloured 2-part} (with respect to $f$). 
		%In choosing the partial colouring $f_1$, the first priority is to colour as many vertices as possible (i.e., maximize $\tau_1(f)$) % the second priority is to have as many nicely coloured 2-parts as possible (i.e., maximize $\tau_2(f)$), 
		% the third priority is to have  as few   fully coloured $2^-$-parts as possible (i.e., minimize $\tau_3(f)$), 
		%and	the second  priority is to have   the numbers of coloured vertices in different parts  as close to each other as possible (i.e., minimize $\tau_2(f)$). 
		
		Let $V_1 = f_1^{-1}(C_L-F) = \bigcup_{i=1}^k S_{f,i}$ be the set of vertices coloured by $f_1$. By the maximality of $\tau_1(f_1)$, $V_1$ must have used all the colours in $C_L-F$, and hence $|C_L-F| \le |V_1|$. 
		
		If $|V-V_1| \le k$, then let $g:V-V_1\to F$ be an arbitrary injective mapping. The union $f_1\cup g$ is a near acceptable $L$-colouring of $G$, and we are done. 
		Thus   we may assume that
		\begin{equation}
			\label{eq-v-v1}
			|V-V_1| \ge k+1, \text{ and hence } |V_1| \le k+1.
		\end{equation}
		
		For $i=1,2,\ldots, k$,  
		let 
		$$ R_{f_1,i} = P_i-S_{f,i}.$$ 
  For   a colour $c \in C_L$, let   $$R_i(c) = |L^{-1}(c) \cap  R_{f_1,i}|$$ be the number of vertices in $R_{f_1,i}$ that can be coloured by $c$, and 
  $$R_i(C_L-F) = \sum_{c \in C_L-F}R_i(c)$$
  be the total number of vertices in $R_{f_1,i}$ that can be coloured by colours from $C_L-F$.

 Take Figure \ref{Figure-3} as an example. If $P_{i}=\{x_i,y_i,z_i,u_i,v_i\}$, hence $S_{f_1,i}=\{x_i,y_i\}$ and $R_{f_1,i}=\{z_i,u_i,v_i\}$. Let $L$ be a $4$-list assignment of $P_i$ with   $L(x_i)=\{1,2,5,6\}$, $L(y_i)=\{1,2,7,8\}$, $L(z_i)=\{3,4,5,6\}$, $L(u_i)=\{3,4,7,8\}$, $L(v_i)=\{5,6,7,8\}$ and $C_L=\{1,2,3,4,5,6,7,8\}$, $F=\{1,2,3,4\}$. Then $R_i(1)=R_i(2)=0$, $R_i(3)=R_i(4)=R_i(5)=R_i(6)=R_i(7)=R_i(8)=2$ and $R_i(C_L-F)=R_i(5)+R_i(6)+R_i(7)+R_i(8)=8$.

		If $c \in C_L-F$, then  $$R_i(c) \le  |f_1^{-1}(c)|,$$
  for otherwise,  	$f_1(c \to P_i)$ is a valid partial $L$-colouring of $G$ which colours more vertices than $f_1$, in contrary to the choice of $f_1$.
		
		\begin{definition}
			A colour $c \in C_L-F$ is said to be {\em movable to $P_i$} if   $R_i(c) = |f_1^{-1}(c)|$. 
		\end{definition}
		
		\begin{observation}
			\label{obs-1}
			The following facts will be used frequently in the argument below.
			\begin{enumerate}
				\item 	If $c \in C_L-F$ is movable to $P_i$, then 
				$f_1(c \to P_i)$ is a valid partial $L$-colouring of $G$ with $\tau_1(f_1(c \to P_i) ) = \tau_1(f_1)$. 
				\item  $R_i(C_L-F) \le |V_1-P_i|$, and if $R_i(C_L-F) = |V_1-P_i|$, then 
				\begin{center}
					every colour $c \in C_L-F$ with $f_1^{-1}(c) \cap P_i = \emptyset$ is movable to $P_i$.   (P1) 
				\end{center} 
				\item If $f_1^{-1}(c)$ is a singleton $f_1$-class, then $c$ is movable to $P_i$ if and only if $c \in L(R_{f,i})$.  
				\item For any choices of distinct colours  $c_1,c_2, \ldots, c_t \in C_L$ and   indices $i_1, i_2, \ldots, i_t$, $f_1(c_1 \to P_{i_1}, c_2 \to P_{i_2}, \ldots, c_t \to P_{i_t})$ is a partial $L$-colouring of $G$. 
			\end{enumerate}	 
		\end{observation}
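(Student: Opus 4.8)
The plan is to read off all four assertions from the definitions of the move operation and of movability, calling on the maximality of $\tau(f_1)$ only where it is genuinely needed. I would dispose of (4) first, since it is purely structural. In $f_1(c_1 \to P_{i_1}, \ldots, c_t \to P_{i_t})$ the indices $i_1, \ldots, i_t$ are distinct, so no vertex is targeted by two of the recolourings, as each vertex lies in a unique part; the colours $c_1, \ldots, c_t$ are distinct and each is first removed from $G$ in the uncolouring phase, so after the recolouring phase every $c_j$ occurs only inside $P_{i_j}$, which is an independent set. Consequently the new classes are pairwise compatible and compatible with the untouched part of $f_1$, and every newly coloured vertex $v$ carries a colour $c_j \in L(v)$; hence the result is a proper partial $L$-colouring of $G$.

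For (1) I would specialise to $t=1$ and track the number of coloured vertices. Movability of $c$ to $P_i$ means $m_i(c) = |f_1^{-1}(c)|$; moreover, by the maximality argument I use for (2) below, $m_i(c) > 0$ forces $f_1^{-1}(c) \cap P_i = \emptyset$, so the uncolouring phase removes exactly $|f_1^{-1}(c)|$ coloured vertices lying outside $P_i$, while the recolouring phase newly colours precisely the $m_i(c)$ uncoloured vertices of $L^{-1}(c) \cap R_{f_1,i}$ (recolouring an already-coloured vertex of $P_i$ does not change how many vertices are coloured). The net change in $\tau_1$ is $-|f_1^{-1}(c)| + m_i(c) = 0$, and since $c \in C - F$ the colouring stays valid, giving $\tau_1(f_1(c \to P_i)) = \tau_1(f_1)$. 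Fact (3) is then immediate: when $f_1^{-1}(c)$ is a singleton we have $|f_1^{-1}(c)| = 1$ and $m_i(c) \le 1$, so $c$ is movable to $P_i$ exactly when $m_i(c) = 1$, i.e.\ exactly when some uncoloured vertex of $P_i$ has $c$ in its list, which is the condition $c \in L(R_{f_1,i})$.

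The crux is (2), and this is where the maximality of $\tau_1(f_1)$ does the real work. The inequality $m_i(c) \le |f_1^{-1}(c)|$ for $c \in C-F$ is already in hand, but summing it naively gives only $m_i \le |V_1|$. To recover the sharper bound $m_i \le |V_1 - P_i| = |V_1| - |S_{f_1,i}|$ I would first argue that every colour $c$ with $f_1^{-1}(c) \cap P_i \neq \emptyset$ contributes nothing: such a $c$ already appears on $P_i$, so if some $v \in R_{f_1,i}$ had $c \in L(v)$ we could colour $v$ by $c$ and increase $\tau_1$, the part $P_i$ being independent, contradicting the choice of $f_1$; hence $m_i(c) = 0$ for all such $c$. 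Splitting the sum accordingly and using $\sum_{c:\, f_1^{-1}(c) \cap P_i = \emptyset} |f_1^{-1}(c)| = |V_1| - |S_{f_1,i}| = |V_1 - P_i|$ yields the bound, and in the case of equality each surviving summand must be tight, i.e.\ $m_i(c) = |f_1^{-1}(c)|$, which is precisely movability of every $c$ with $f_1^{-1}(c) \cap P_i = \emptyset$. The one point requiring care throughout is the exact semantics of the move operation, in particular that overwriting an already-coloured vertex of $P_i$ leaves the total number of coloured vertices unchanged, so I would pin this down before running the counting in (1) and (2).
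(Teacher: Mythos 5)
Your proposal is correct and takes essentially the same approach as the paper: the paper declares (1), (3) and (4) trivial and proves (2) exactly as you do, using the maximality of $\tau_1(f_1)$ to get $m_i(c)=0$ whenever $f_1^{-1}(c)\cap P_i\neq\emptyset$, hence $m_i(c)\le |f_1^{-1}(c)-P_i|$ for all $c\in C-F$, and then summing with the equality analysis. Your explicit verification of the move-operation semantics (overwriting already-coloured vertices of $P_i$, classes lying in a single part of the complete multipartite graph) merely fleshes out steps the paper treats as routine.
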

		\begin{proof}
			(1),(3), (4) are trivial.
			
			(2): If $c\in C_L-F$ and $f^{-1}(c) \cap P_i \ne \emptyset$, then $R_i(c) = 0$, for otherwise, we can colour vertices in 
			$\{v \in R_{f_1, i}: c \in L(v)\}$ with colour $c$. By the fact that $R_i(c) \le |f^{-1}_1(c)|$, we have 
			$R_i(c) \le |f_1^{-1}(c) - P_i|$ for any colour $c \in C_L-F$. Hence $R_i(C_L-F) = \sum_{c \in C_L-F}R_i(c) \le |V_1-P_i|$, and equality holds only if $R_i(c) = |f_1^{-1}(c)|$ for all $c \in C_L-F$ with $f_1^{-1}(c) \cap P_i = \emptyset$.
		\end{proof}
		
		%	Note that if $c_1, c_2$ are movable to $P_i$, then %	each of $f_1(c_1 \to P_i)$ and $f_1(c_2 \to P_i)$ colours the same number of vertices as $f_1$. However, 
		%	$f_1(c_1 \to P_i, c_2 \to P_i)$ may colour less number of vertices than $f_1$, because there maybe $v \in P_i$ such that $c_1, c_2 \in L(v)$. 
		
		\begin{claim}
			\label{part with size 2}
			If $|P_i| = 2$, then $S_{f_1,i} \neq \emptyset$.
		\end{claim}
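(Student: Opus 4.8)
The plan is to argue by contradiction: suppose $P_i=\{u,v\}$ with $S_{f_1,i}=\emptyset$, so that $f_1$ leaves both $u$ and $v$ uncoloured. First I would record two structural facts. Since $P_i$ is a $2$-part, Lemma~\ref{clm-nocommoncolour} gives $L(u)\cap L(v)=\emptyset$; writing $U=L(u)\cap(C-F)$ and $W=L(v)\cap(C-F)$ we then have $U\cap W=\emptyset$, and since $|L(u)|,|L(v)|\ge k$ while $|F|=k$,
$$|U\cup W|=|U|+|W|\ge |L(u)|+|L(v)|-|F|\ge k.$$
(Note $|U\cup W|=m_i$, because $m_i(c)=|L^{-1}(c)\cap\{u,v\}|$ equals $1$ for $c\in U\cup W$ and $0$ otherwise.) Second, by the maximality of $\tau_1(f_1)$ every colour of $U\cup W$ is already used by $f_1$: otherwise such a colour could be assigned to $u$ or $v$, producing a valid partial $L$-colouring with larger $\tau_1$. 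Thus $f_1$ uses at least $k$ distinct colours, each occupying a single part other than $P_i$.

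The next step is to control how these colours are distributed among parts, and here the secondary minimisation of $\tau_2$ is the essential tool. If $c\in U\cup W$ is a \emph{movable} singleton (i.e.\ $|f_1^{-1}(c)|=1$, which by $m_i(c)=1$ is precisely the movable case), then the move $f_1(c\to P_i)$ keeps $\tau_1$ fixed (Observation~\ref{obs-1}(1)) while changing $\tau_2$ by $2-2|S_{f_1,j}|$, where $P_j$ is the part containing $f_1^{-1}(c)$. Minimality of $\tau_2$ forces $|S_{f_1,j}|=1$, so $f_1^{-1}(c)$ is the \emph{only} coloured vertex of its part. Consequently distinct movable singletons of $U\cup W$ lie in distinct parts, all different from $P_i$.

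Then I would count coloured vertices. Let $t$ be the number of colours of $U\cup W$ whose $f_1$-class has size $\ge 2$, and let $e$ be the number of used colours outside $U\cup W$. Counting gives $|V_1|\ge |U\cup W|+t+e\ge k+t+e$, and since $|V_1|\le k+1$ by \eqref{eq-v-v1} we obtain $t+e\le 1$. Hence at least $|U\cup W|-t\ge k-t$ colours of $U\cup W$ are movable singletons, occupying $\ge k-t$ distinct parts different from $P_i$; when $t=1$, the unique class of size $\ge 2$ has two coloured vertices and so lies in a part disjoint from all those sole-occupancy parts, contributing one more. In every case this yields at least $k$ distinct parts different from $P_i$, contradicting the fact that $G$ has only $k-1$ parts other than $P_i$.

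The main obstacle is exactly the bookkeeping of the last paragraph. The clean pigeonhole ``$\ge k$ colours crowded into $\le k-1$ parts'' would be immediate if every colour of $U\cup W$ were a movable singleton, but the $\tau_2$-argument only forces sole occupancy in that case, and it is spoiled both by colours of $U\cup W$ with multiplicity $\ge 2$ and by any used colour lying outside $U\cup W$. What rescues the argument is the vertex-count inequality $t+e\le 1$, which caps the number of such exceptional colours at one, so that the sole-occupancy parts alone already supply the $k$ parts needed for the contradiction.
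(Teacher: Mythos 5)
Your proof is correct, and it reorganizes the paper's exchange argument along a genuinely different route. Both proofs run on the same fuel --- Lemma~\ref{clm-nocommoncolour} gives $L(u)\cap L(v)=\emptyset$, \eqref{eq-v-v1} gives $|V_1|\le k+1$, and the maximality of $\tau_1$ together with the minimality of $\tau_2$ drives an exchange via a movable singleton colour --- but the architectures differ. The paper argues: at least $k$ colours of $C-F$ are used, their classes sit in the $k-1$ parts other than $P_i$, so some part $P_{j_0}$ carries two used colours $c_1,c_2$; it then needs a case analysis on $|C|=2k$ versus $|C|=2k+1$ to certify that one of $c_1,c_2$ is a \emph{movable} singleton, and moving that colour to $P_i$ strictly decreases $\tau_2$, contradicting the choice of $f_1$. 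You instead restrict from the outset to $U\cup W\subseteq L(u)\cup L(v)$, where every singleton class is automatically movable (since $m_i(c)=1$), use $\tau_2$-minimality as a structural lemma (each such colour is the sole coloured vertex of its part, so distinct movable singletons occupy distinct parts), and land the contradiction as a pigeonhole on parts: the bound $t+e\le 1$ coming from $|V_1|\le k+1$ leaves at least $k$ distinct parts other than $P_i$, one more than exists. Your route buys the elimination of the paper's case split on $|C|$ --- that split exists precisely because the paper's two colours $c_1,c_2$ need not lie in $L(u\vee v)$, whereas yours do by construction --- and it handles the possible class of size $2$ uniformly through the count $t+e\le 1$ rather than through ``at most one class has size $2$''; the paper's version is a little shorter once that structural fact is in hand. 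One small point of hygiene: your inequality $|U|+|W|\ge |L(u)|+|L(v)|-|F|$ is valid only because $L(u)\cap F$ and $L(v)\cap F$ are disjoint subsets of $F$ (which follows from the disjointness of the full lists), so that step deserves an explicit word.
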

		\begin{proof}
			Assume to the contrary that $P_i=\{u,v\}$  and  $S_{f_1,i}= \emptyset$.
			By Lemma \ref{lem-noncommoncolour}, 
			$L(u \wedge v) = \emptyset$. Hence  $|C_L|\ge 2k$ and  $|V_1|\ge |C_L-F|\ge k$. So there are at least $k$ $f_1$-classes. As $|V_1| \le k+1$ (see (\ref{eq-v-v1})), each $f_1$-class is a singleton, except at most one $f_1$-class is of size 2.  
			
			Since $S_{f_1,i}= \emptyset$, there is an index $j_0$ such that $|f_1(S_{f_1,j_0}))| \ge 2$. Assume $c_1,c_2 \in f_1(S_{f_1,j_0})$.
			At least one of  $ f_1^{-1}(c_1), f_1^{-1}(c_2) $ is a singleton $f_1$-class. 
			
			If  
			$|C_L|=2k$, then $L(u \vee v)=C_L$ and by (3)  of Observation \ref{obs-1}, one of $c_1,c_2$, say $c_1$,   is movable to $P_i$ and $f_1^{-1}(c_1)$ is a singleton $f_1$-class. 
			If $|C_L|=2k+1$, then  there are $k+1$ $f_1$-classes, and hence each $f_1$-class is a singleton. So both $ f_1^{-1}(c_1), f_1^{-1}(c_2) $ are singleton $f_1$-classes, and at least  one of $c_1, c_2$ belongs to $L(R_{f_1,i})$ and hence is movable to $P_i$.   
			
			Assume $ f_1^{-1}(c_1)$ is a singleton $f_1$-class and $c_1$ is movable to $P_i$.  
			
			Then   $\tau_1(f_1(c_1 \to P_i))=\tau_1(f_1)$,   $\tau_2(f_1(c_1 \to P_i)) < \tau_2(f_1)$. This is in contrary to our choice of $f_1$.  
		\end{proof}

		By a re-ordering, if needed, we assume that 
		\begin{equation}
			|R_{f_1,1}| \ge |R_{f_1,2}| \ge \ldots \ge |R_{f_1,k}|. \tag{R1}  \label{R1}
		\end{equation}
		
		In the second step, starting from $i=1$ to $k$, we do the following:
		If there is a colour $c\in F$ such that $c\in \bigcap_{v\in R_{f_1,i}}L(v)$ and $c$ is not used by $R_{f_1,j}$ for $j < i$, then we colour  $R_{f_1,i}$ with $c$. 
		The step terminates   when such a colour does not exist.

		Assume the second step stopped at $i_0+1$, and hence $R_{f_1,1}, \ldots, R_{f_1,i_0}$ are coloured in the second step. 
		
		Note that in the ordering of $R_{f_1,1}, R_{f_1,2}, \ldots, R_{f_1,k}$, if some of the $R_{f_1,j}$'s has the same cardinality, then we can choose different ordering so that (\ref{R1}) still holds. Also with a given ordering of $R_{f_1,1}, R_{f_1,2}, \ldots, R_{f_1,k}$, when we colour all the vertices of $R_{f_1,i}$, there may be more than one choice of the colours. We assume that
		
		\begin{equation}
			\begin{aligned} &\text{ Subject to (\ref{R1}), the ordering of $R_{f_1,1}, R_{f_1,2}, \ldots, R_{f_1,k}$ and} \\ 
				& \text{	the colouring of  the $R_{f_1,i}$'s are chosen so that $i_0$ is maximum . }
			\end{aligned} \tag{R2} \label{R2}
		\end{equation}

		\medskip
		
		We denote by $f_2$ the colouring constructed in the second step, and by $V_2$ the set of vertices coloured in this step, and let  $V_3  =  V -V_1 -V_2$ be the set of uncoloured vertices after the second step.  Let $F_1$ be the frequent colours used in second step,
		and  let $F_2=F-F_1$.	So $|F_1|=i_0$ and $|F_2|=k-i_0$. 
		Note that it is possible that ${i_0}=0$ and $V_2 =\emptyset$.

		If $ |R_{f_1,{i_0}+1}| \le 1$, then $|V_3|\le k-{i_0}=|F_2|$, and $f_1\cup f_2$ can be extended to   a near acceptable $L$-colouring of $G$ by colouring $V_3$ injectively by $F_2$, and we are done.  
		
		Therefore  the following hold:
		\begin{equation}
			\begin{aligned}
				|R_{f_1,{i_0}+1}| &\ge 2, \\
				|V_2|&\ge 2{i_0}, \\
				|V_3|&\ge k-{i_0}+1, \\
				|V_1|&= |V|-|V_2|-|V_3|\le   k-{i_0}+1.
			\end{aligned}  	\label{eq3-3}
		\end{equation}

		Observe that for each colour   $c \in F_2$, 
		$$R_{i_0+1}(c) \le |R_{f_1, {i_0}+1}|-1,$$
		and for each colour    $c \in F_1$, 
		$$R_{i_0+1}(c) \le |R_{f_1,{i_0}+1}|.$$
		Hence  
		
		\begin{equation}
			\begin{aligned}
				R_{i_0+1}(C_L-F) &=\sum_{c \in C_L-F} R_{i_0+1}(c) \\
				&=\sum_{c \in C_L} R_{i_0+1}(c) - \sum_{c \in F_1 \cup F_2} R_{i_0+1}(c)\\
				& \ge k|R_{f_1,{i_0}+1}|-(|R_{f_1,{i_0}+1}|-1)(k-{i_0})-|R_{f_1,{i_0}+1}|{i_0}=k-{i_0},
			\end{aligned} 	\label{eq6-1a}
		\end{equation}
		and if the equality holds, then \begin{eqnarray}
		    \forall c \in F_2,  R_{i_0+1}(c) &=& |R_{f_1,i_0+1}|-1,  \label{eq-mi0c}  \\
 \forall c \in F_1,  R_{i_0+1}(c) &=& |R_{f_1,i_0+1}|.   \label{eq-mi0}
		\end{eqnarray}

		Combining (\ref{eq3-3}) with (\ref{eq6-1a}) and by (2) of Observation \ref{obs-1}, we have 
		\begin{equation}
			k-{i_0}+1 \ge |V_1|\ge |V_1 -P_{{i_0}+1}|\ge R_{i_0+1}(C_L-F)\ge k-{i_0}.   \label{eq5-1}
		\end{equation}
		
		So  $|V_1|=k-{i_0}$ or $|V_1|=k-{i_0}+1$.

		\medskip
		\noindent	{\bf Case 1}: $|V_1|=k-{i_0}$. 
		\medskip

		In this case, 
		\begin{equation}
			|V_{1}| =|
			V_1-P_{i_0+1}| = R_{i_0+1}(C_L-F)=k-i_0, \  S_{f_1,i_0+1} = V_1 \cap P_{i_0+1} = \emptyset, \ \text{ and } \  P_{{i_0}+1}=R_{f_1,{i_0}+1}.  \label{eq4-4} 
		\end{equation}
		So (P1) and (P2) holds for $i_0+1$.
		By Claim \ref{part with size 2}, $|P_{{i_0}+1}|=|R_{f_1,{i_0}+1}|\ge 3$. Hence $|V_2|\ge 3{i_0}$. This implies that $$k-{i_0}=|V_1|\le k-2{i_0}+1,$$
		and hence   ${i_0}\le 1$.

		\medskip
		\noindent		{\bf Case 1.1}: $i_0=1$. 
		\medskip

		In this case, $|V_1|=k-1$, $|V_2|\ge 3$ and $|V_3|\ge k$ (by (\ref{eq3-3}), i.e. $|V_3|\ge k-i_0+1=k$).  Since $|V|=2k+2$, we conclude that  
		$|V_2|=|R_{f_1,1}|=3$ and $|V_3|=k$.

		By (\ref{eq4-4}),   $R_2(C_L-F)= |V_1|= k-1$. This  implies that 
		$$ \sum_{c \in F}R_2(c) = \sum_{c \in C}R_2(c) -\sum_{c \in C-F}R_2(c) =3k-R_2(C_L-F) = 2k+1.$$
		Hence there is a colour $c_1\in F$ such that $R_2(C_L-F)(c_1) =|L^{-1}(c_1)\cap R_{f_1,2}|\ge 3= |R_{f_1,1}|\ge |R_{f_1,2}|$. So
		$c_1\in \bigcap_{v\in R_{f_1,2}}L(v)$.  On the other hand, by (\ref{eq4-4}),  $P_2 = R_{f_1,2}$, and by Lemma \ref{lem-noncommoncolour}, $  \bigcap_{v\in P_2}L(v) = \emptyset$, a contradiction.

		\medskip
		\noindent	{\bf Case 1.2}: $i_0=0$. 
		\medskip

		In this case, 
		\begin{equation}
			|V_1| = R_1(C_L-F) =k, \ \ |V_2|=0,\ \  |V_3|=k+2.   \label{eq4-5}
		\end{equation}
		Combining with $i_0=0$ and (P2),     for each colour $c \in F$, $R_1(c)=|R_{f_1,1}|-1$.
		\begin{claim}
			\label{p1}
			$|P_1|=|R_{f_1,1}|=3$ and $R_1(c) = 2$ for any colour $c \in F$.
		\end{claim}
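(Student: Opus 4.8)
The lower bound $|P_1|=|R_{f_1,1}|\ge 3$ is already available — it is the conclusion recorded at the start of Case 1, where Claim \ref{part with size 2} rules out $|R_{f_1,1}|=2$ once $S_{f_1,1}=\emptyset$ — so the real content of the claim is the upper bound $|P_1|\le 3$. I would argue by contradiction from $|P_1|\ge 4$, and the lever will be the extremality of $f_1$ (first $\tau_1$, then $\tau_2$) together with the maximality of $i_0$ in (\ref{R1})--(\ref{R2}), used alongside the Case 1.2 facts $|V_1|=m_1=k$, every colour of $C-F$ movable to $P_1$, $R_{f_1,1}=P_1$, and $m_1(c)=|P_1|-1$ for all $c\in F$.

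The first move is a structural reduction. A colour class, being monochromatic and independent, lies inside a single part; let $c'\in C-F$ have its class inside $P_{j_0}$ with $t=|f_1^{-1}(c')|=m_1(c')$. The partial colouring $f_1(c'\to P_1)$ recolours the $t$ vertices of $L^{-1}(c')\cap P_1$ and frees $P_{j_0}$; since $S_{f_1,1}=\emptyset$ this changes $\tau_2$ by $2t^2-2t|S_{f_1,j_0}|$. Minimality of $\tau_2(f_1)$ forces $t\ge |S_{f_1,j_0}|$, and as $f_1^{-1}(c')\subseteq S_{f_1,j_0}$ this yields $S_{f_1,j_0}=f_1^{-1}(c')$. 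Hence every step-$1$-coloured part carries exactly one colour class, so the coloured parts correspond bijectively to the $|C-F|=k+2-\lambda$ colours of $C-F$; each coloured $2^+$-part keeps residual at least $1$ (its class is proper in the part by Lemma \ref{clm-nocommoncolour}), and the $\lambda-2$ uncoloured parts are singletons or $3^+$-parts by Claim \ref{part with size 2}, with $P_1$ among the latter.

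Next I would feed this into the count $\sum_i|R_{f_1,i}|=|V_3|=k+2$. Bounding each coloured $2^+$-part's residual below by $1$, each uncoloured singleton by $1$, and each uncoloured $3^+$-part by $3$, the count first forces $P_1$ to be the \emph{unique} uncoloured $3^+$-part (two of them would already overshoot $k+2$), and then delivers $|P_1|\le 3$ — provided there are no coloured singleton parts. Thus the whole claim reduces to excluding coloured singletons.

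To exclude a coloured singleton $\{v\}$ (colour $c'$, so $t=1$), I would use $f_1(c'\to P_1)$ once more: by the computation above it is $\tau$-neutral, it recolours a single vertex $y\in P_1$, and the residual $P_1\setminus\{y\}$ of size $|P_1|-1\ge 2$ shares every frequent colour whose unique missing vertex on $P_1$ is $y$ — at least one such colour exists because $c'\in L(y)\cap(C-F)$ forces $|L(y)\cap F|\le k-1$. The second step applied to this $\tau$-optimal colouring then colours $P_1\setminus\{y\}$ by a frequent colour, producing $i_0\ge 1$ and contradicting the maximality in (\ref{R2}). The step I expect to be the main obstacle is precisely this last one: I must make the maximality of $i_0$ bear on a modified colouring (the three extremal conditions on $f_1$ have to be read jointly), and I must control the residual-size ordering imposed by (\ref{R1}) so that $P_1\setminus\{y\}$ is the residual the second step reaches first, handling ties in $|R_{f_1,i}|$ and ensuring the freed part $\{v\}$ does not itself become an obstruction. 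Once $|P_1|=3$ is secured, $m_1(c)=|P_1|-1=2$ for every $c\in F$ is immediate.
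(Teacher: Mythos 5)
Your proposal misses the short argument that is actually available at this point, and the long route you substitute for it has a genuine gap at exactly the step you flag as the main obstacle. In Case 1.2 one already knows $m_1(c)=|R_{f_1,1}|-1$ for \emph{every} $c\in F$ and $|V_3|=k+2$; so if $|R_{f_1,1}|\ge 4$, the paper simply picks one frequent colour $c$, colours the set $X=\{v\in R_{f_1,1}: c\in L(v)\}$ (of size $|R_{f_1,1}|-1\ge 3$) with $c$, and colours the remaining $|V_3-X|\le k-1$ vertices injectively with the other $k-1$ colours of $F$. This is a near acceptable $L$-colouring, which is precisely the goal of Lemma \ref{lem-kfrequent}, so the claim follows in two lines. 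No extremality of $\tau_2$, no counting of colour classes per part, and no appeal to the maximality of $i_0$ is needed.

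Your route, by contrast, hinges on excluding coloured singletons by passing to the $\tau$-neutral colouring $f_1'=f_1(c'\to P_1)$ and then invoking the maximality of $i_0$, and this step fails as described. First, (\ref{R2}) as set up in the paper optimizes $i_0$ \emph{for the fixed} $f_1$; making it bear on a modified first-step colouring requires strengthening the extremal choice, which you note but do not carry out. Second, even with that strengthening, (\ref{R1}) can defeat you: at this stage of the argument nothing prevents a tie $|R_{f_1,2}|=|R_{f_1,1}|=|P_1|$ (Claims \ref{r} and \ref{s}, which would rule this out, are proved \emph{after} Claim \ref{p1} and use it). After your move, $R_{f_1',1}$ has size $|P_1|-1$, so the ordering must process $R_{f_1,2}$ first; since $i_0=0$ was maximal over tie-breakings, $R_{f_1,2}$ has no common frequent colour, the second step terminates immediately, $i_0(f_1')=0$, and no contradiction results. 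Your earlier parity-style counting ($2u\le a+2$ in effect) does not exclude this configuration once several coloured singletons are present. A smaller but real defect: your assertion that $c'\in L(y)\cap(C-F)$ forces $|L(y)\cap F|\le k-1$ tacitly assumes $|L(y)|=k$, which is not given (lists may exceed size $k$; indeed the counterexample is chosen with $\sum_{v}|L(v)|$ maximum); it can be salvaged from the equality case of the computation $\sum_{c\in F}m_1(c)=k(|R_{f_1,1}|-1)$, which forces $|L(v)|=k$ on $R_{f_1,1}$, but you never establish this. The structural reduction in your first two paragraphs (each coloured part carries exactly one colour class, via the $2t^2-2t|S_{f_1,j_0}|$ change in $\tau_2$) is correct and in the spirit of the paper's proof of Claim \ref{clm-s}, but it is machinery the claim does not need.
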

		\begin{proof}
			If $|R_{f_1,1}| \ge 4$, then for any colour $c \in F$,
			$f_1(c \to P_1)$ can be extended to a near acceptable $L$-colouring of $G$ by colouring the remaining $k-1$  vertices of $V_3$  injectively with
			the remaining $k-1$ colours of $F$ (note that $|L^{-1}(c) \cap P_1|=|R_{f_1,1}|-1 \ge 3$).    
			
			Thus $|P_1|=|R_{f_1,1}|=3$ (cf. (\ref{eq4-4})). This implies that $R_1(c) = 2$ for any colour $c \in F$.
		\end{proof}	
		If there is a colour $c \in F$ such that 
		$R_2(c)\ge 2$, then 
		$f_1$ can be extended to a near acceptable $L$-colouring of $G$ by colouring a 2-subset $U_1$ of   of $R_{f_1,2}$ with a colour   $c \in \bigcap_{v \in U_1}L(v) \cap F$, colouring a 2-subset $U_2$ of $R_{f_1,1}$ by a colour from $c' \in \bigcap_{v \in U_2}L(v) \cap (F-\{c\})$,
		and colouring  the remaining $k-2$ vertices of  $V_3$  injectively with
		the remaining $k-2$ colours of $F$.    
		
		Thus 
		\begin{equation}
			\label{eq4-6}
			R_2(c) \le 1, \forall c \in F \ \text{ and }   \sum_{c \in F}R_2(c) \le k.  
		\end{equation}  
		This implies that $|R_{f_1,2}| \le 2$, for otherwise  interchanging the roles of $R_{f_1,1}$ and $R_{f_1,2}$, we would have   $R_2(c) =|R_{f_1,2}|-1\ge 2$ for all $c \in F$, in contrary to (\ref{eq4-6}).

		\begin{claim}
			\label{r}
			$|R_{f_1,i}|=1$ for $i=2,3,\ldots, k$.
		\end{claim}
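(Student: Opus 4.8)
The plan is to first pin down $|R_{f_1,2}|=1$, which by the ordering (\ref{R1}) immediately gives $|R_{f_1,i}|\le 1$ for every $i\ge 2$, and then to upgrade all these inequalities to equalities by a single global vertex count. Since $|R_{f_1,2}|\le 2$ has already been established, the only thing to rule out is $|R_{f_1,2}|=2$.

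To do this I would play the two natural bounds on $m_2=\sum_{c\in C-F}m_2(c)$ against each other. On one side, the counting identity
\[
\sum_{c\in C}m_2(c)=\sum_{v\in R_{f_1,2}}|L(v)|=k\,|R_{f_1,2}|
\]
combined with $\sum_{c\in F}m_2(c)\le k$ from (\ref{eq4-6}) would give, under the assumption $|R_{f_1,2}|=2$,
\[
m_2=k\,|R_{f_1,2}|-\sum_{c\in F}m_2(c)\ge 2k-k=k.
\]
On the other side, Observation \ref{obs-1}(2) yields $m_2\le|V_1-P_2|=|V_1|-|S_{f_1,2}|=k-|S_{f_1,2}|$, using $|V_1|=k$ from (\ref{eq4-5}). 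Together these force $|S_{f_1,2}|=0$, so $S_{f_1,2}=\emptyset$ and $P_2=R_{f_1,2}$ is a part of size $2$. This contradicts Claim \ref{part with size 2}, which asserts that a $2$-part must have $S_{f_1,2}\ne\emptyset$. Hence $|R_{f_1,2}|\le 1$, and by (\ref{R1}) we get $|R_{f_1,i}|\le 1$ for all $2\le i\le k$.

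To conclude, I would count the uncoloured vertices. By (\ref{eq4-5}) we have $|V_3|=\sum_{i=1}^{k}|R_{f_1,i}|=k+2$, while Claim \ref{p1} gives $|R_{f_1,1}|=3$, so $\sum_{i=2}^{k}|R_{f_1,i}|=k-1$. This is a sum of $k-1$ nonnegative integers, each at most $1$, whose total is $k-1$; therefore every summand equals $1$, that is, $|R_{f_1,i}|=1$ for $i=2,3,\ldots,k$, as claimed.

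The argument requires no new case analysis, so I expect it to be short; the only point demanding care is the split of $\sum_{c\in C}m_2(c)$ into its $F$ and $C-F$ parts, where one must use that each frequent colour contributes at most $1$ (encoded in (\ref{eq4-6})) while the upper bound on $m_2$ is exactly the movability count of Observation \ref{obs-1}(2). The main obstacle is thus conceptual rather than computational: recognizing that the forced equality $m_2=|V_1-P_2|$ empties $S_{f_1,2}$ and triggers Claim \ref{part with size 2}.
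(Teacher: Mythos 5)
Your proof is correct and is essentially the paper's own argument: the paper also rules out $|R_{f_1,2}|=2$ by playing $\sum_{c\in F}m_2(c)\le k$ from (\ref{eq4-6}) against the bound $m_2\le |V_1-P_2|=k-|S_{f_1,2}|$ of Observation \ref{obs-1}(2), invoking Claim \ref{part with size 2}, and then gets $|R_{f_1,i}|=1$ for $i\ge 2$ from (\ref{R1}) together with $|V_3|=k+2$ and $|R_{f_1,1}|=3$. The only cosmetic difference is the order of the contradiction: you force $|S_{f_1,2}|=0$ and contradict Claim \ref{part with size 2}, while the paper uses $|S_{f_1,2}|\ge 1$ to contradict (\ref{eq4-6}) --- the same inequalities read in the opposite direction.
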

		\begin{proof}
			Assume to the contrary that $|R_{f_1,2}|=2$ (as $|R_{f_1,2}|\le2$), then by Observation \ref{obs-1}, $R_2(C_L-F)\le |V_1-P_2| =|V_1|-|S_{f_1,2}| = k-|S_{f_1,2}|$
			and 
			\begin{equation}
				\label{eq4-7}
				\begin{aligned}
					\sum_{c \in F}R_2(c) &= \sum_{c \in C_L}R_2(c) - \sum_{c \in C_L-F}R_2(c) 
					= 2k - R_2(C_L-F) \ge  k + |S_{f_1,2}|.
				\end{aligned}  
			\end{equation} 
   Combining with (\ref{eq4-6}), we have $|S_{f_1,2}|=0$ and hence $R_{f_1,2}=P_2$, in contrary to Claim \ref{part with size 2}.
			By Claim \ref{part with size 2},   $|S_{f_1,2}|\ge 1$, in contrary to  (\ref{eq4-6}).  
			
			Therefore $|R_{f_1,2}|=1$ and hence $|R_{f_1,i}|=1$ for $i=2,3,\ldots, k$ (note that $|V_3|=k+2$).  
		\end{proof}
		%Note that  $|V_1-P_1|=|V_1|=R_1(C_L-F)=k$ (cf.  (\ref{eq4-4}) and (\ref{eq4-5})). By Observation \ref{obs-1},   any $c\in C-F$  is movable to $P_1$.
		\begin{claim}
			\label{s}
			$|S_{f_1,j}|\le 2$ for $j=2,3,\ldots, k$.
		\end{claim}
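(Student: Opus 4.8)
The plan is to argue by contradiction, playing the extremal choice of $f_1$ against Lemma~\ref{clm-nocommoncolour}. Suppose some $j \in \{2,\ldots,k\}$ has $|S_{f_1,j}| \ge 3$. Two facts set the stage: by (\ref{eq4-4}) (with $i_0=0$) we have $S_{f_1,1}=\emptyset$, since $P_1=R_{f_1,1}$; and by the remark preceding the claim every colour of $C-F$ is movable to $P_1$. Moreover, since $f_1$ properly colours the complete multipartite graph $G$, each colour class $f_1^{-1}(c)$ is an independent set and hence lies inside a single part, so the coloured set $S_{f_1,j}$ splits into colour classes, each contained in $P_j$.

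First I would dispose of the case where $S_{f_1,j}$ is not a single colour class, i.e.\ it contains a class $f_1^{-1}(c)$ with $|f_1^{-1}(c)| < |S_{f_1,j}|$. Here I exploit that $c\in C-F$ is movable to $P_1$: by Observation~\ref{obs-1} the colouring $f_1(c\to P_1)$ is valid with $\tau_1(f_1(c\to P_1))=\tau_1(f_1)$, removing the whole class $f_1^{-1}(c)$ from $P_j$ and recolouring an equal number $m_1(c)=|f_1^{-1}(c)|$ of the previously uncoloured vertices of $P_1$. Writing $a=|S_{f_1,j}|$ and $b=|f_1^{-1}(c)|$, the only affected terms of $\tau_2$ are those of $P_1$ and $P_j$, so the change in $\tau_2$ equals $\bigl(b^2+(a-b)^2\bigr)-\bigl(0+a^2\bigr)=2b(b-a)<0$. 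This strictly decreases $\tau_2$ while fixing $\tau_1$, contradicting the lexicographic maximality of $\tau(f_1)=(\tau_1(f_1),-\tau_2(f_1))$.

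The remaining case is that $S_{f_1,j}$ is a single colour class $f_1^{-1}(c)$ of size $a\ge 3$. Then movability of $c$ to $P_1$ forces $m_1(c)=|f_1^{-1}(c)|=a\ge 3$, whereas $m_1(c)=|L^{-1}(c)\cap P_1|\le |P_1|=3$ by Claim~\ref{p1}. Hence $a=3$ and $L^{-1}(c)\cap P_1=P_1$, i.e.\ $c\in\bigcap_{v\in P_1}L(v)$, contradicting Lemma~\ref{clm-nocommoncolour} applied to the $3$-part $P_1$. Combining the two cases yields $|S_{f_1,j}|\le 2$.

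The only point that needs care — a bookkeeping obstacle rather than a conceptual one — is checking that the rebalancing move $f_1(c\to P_1)$ really preserves $\tau_1$ and touches only the two terms $P_1$ and $P_j$ in $\tau_2$; this rests precisely on $S_{f_1,1}=\emptyset$ and on $f_1^{-1}(c)$ lying entirely inside $P_j$. Once that is secured, $\tau_2$-minimality of $f_1$ eliminates the ``spread'' case and Lemma~\ref{clm-nocommoncolour} on the $3$-part $P_1$ eliminates the ``concentrated'' case.
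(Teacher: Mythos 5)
Your proof is correct and takes essentially the same route as the paper: the rebalancing move $f_1(c\to P_1)$ (legitimate because every colour of $C-F$ is movable to $P_1$ and $S_{f_1,1}=\emptyset$ by (\ref{eq4-4})) preserves $\tau_1$ and strictly decreases $\tau_2$, forcing each $S_{f_1,j}$ to be a single colour class, whose size is then bounded via $|f_1^{-1}(c)|=m_1(c)\le |P_1|-1=2$. Your concentrated case even spells out a step the paper leaves implicit, namely that $m_1(c)=3$ would put $c$ in $\bigcap_{v\in P_1}L(v)$, contradicting Lemma~\ref{clm-nocommoncolour}.
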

		\begin{proof}
			If $|f_1(S_{f_1,j})| \ge 2$ for some $j$, say $c_1, c_2\in f_1(S_{f_1,j})$, then 
			$\tau_1(f_1(c_1 \to P_1)) = \tau_1(f_1) $ (as (P1) holds) and  $\tau_2(f_1(c \to P_1)) <\tau_2(f_1)$,
			because    $$|S_{f_1,1}|=0  \ (\text{ by \ref{eq4-4}}),   \ |S_{f_1(c_1 \to P_1),1}|=R_1(C_L-F)(c_1),$$
			and 
			$$ |S_{f_1(c_1 \to P_1),j}|= |S_{f_1,j}|-R_1(C_L-F)(c_1) > 0,$$
			since  $|f_1(S_{f_1,j})| \ge 2$.
			This is
			in contrary to our choice of $f_1$. 
			
			Hence for each $j\in \{2,3,\ldots, k\}$, $|f_1(S_{f_1,j})| \le 1$, and $ |S_{f_1,j}|  \le  |f_1^{-1}(c_j)|$ for some $c_j \in C_L-F$. 
			%\red{in fact, $ |S_{f_1,j}|  = |f_1^{-1}(c_j)|$}
			As (P1) holds, $ |f_1^{-1}(c_j)| = R_1(C_L-F)(c_j)   \le 2$. So  $ |S_{f_1,j}|  \le 2$.
		\end{proof}
		Combining with Claim \ref{p1}, \ref{r} and \ref{s}, we have
		\begin{equation*}
			|R_{f_1,1}|=3, \ |S_{f_1,1}|=0, \text{ and for $2 \le j \le k$}, \
			|R_{f_1,j}| = 1, \ |S_{f_1,j}| \le 2.   \label{eq4-8}
		\end{equation*}
		So each part of $G$ is $3^-$-part,  in contrary to Theorem \ref{thm-main2}.

		\medskip
		\noindent
		{\bf Case 2: $|V_1|=k-i_0+1$.}
		\medskip

		If   $P_{i_0+1}=R_{f_1,i_0+1}$, then by Claim \ref{part with size 2},   $ |R_{f_1,i_0+1}|\ge 3$ and  $|V_2|\ge 3i_0$. By (\ref{eq3-3}),
		$|V_1|= |V|-|V_2|-|V_3| \le 2k+2 - 3i_0 - (k-i_0+1) = k-2i_0+1$, and hence 
		$i_0=0$.
		This implies that $$|V_1|=k+1, \ |V_2|=0, \  |V_3|=k+1.$$ By Observation \ref{obs-1}, $R_1(C_L-F)\le |V_1|=k+1$, we conclude that 
		$$\sum_{c \in F}R_1(c) \ge \sum_{c \in C_L}R_1(c) -\sum_{c \in C_L-F}R_1(c)  \ge 3k-R_1(C_L-F) \ge 2k-1 \ge  k+1.$$
		So there is a colour $c \in F$ such that $R_1(c)\ge2$.   We can extend $f_1$ to a near acceptable $L$-colouring of $G$ by colouring two vertices of $R_{f_1,1}$ with $c$, and the remaining $k-1$ vertices of $V_3$ injectively with the remaining $k-1$ colours of $F$. 
		
		Thus $P_{i_0+1} \ne R_{f_1, i_0+1}$, i.e., $S_{f_1, i_0+1} \ne \emptyset$.
		
		As   $S_{f_1, i_0+1} \ne \emptyset$,  $|V_1 -P_{i_0+1}|=|V_1|-|S_{f_1,i_0+1}| < |V_1|$ and   by (\ref{eq5-1}), we have  
  \begin{equation}
      \label{eq-si01}
      |V_1 -P_{i_0+1}|= k-i_0=R_{i_0+1}(C_L-F), \  |S_{f_1,i_0+1}|=1.
  \end{equation}
		So (P1) and (P2) holds for $i_0+1$.

		\begin{claim}
			\label{clm-pr0}
			For each $1 \le i \le i_0+1$, $|R_{f_1,i}| = 2$ and for $j \ge i_0+2$, $|R_{f_1,i}| \le 2$.
		\end{claim}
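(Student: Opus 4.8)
The plan is to prove Claim~\ref{clm-pr0} by a pure vertex-counting argument internal to Case~2, deliberately \emph{avoiding} any colouring-extension of $f_1\cup f_2$ (in contrast to the analogous Claims~\ref{p1}, \ref{r}, \ref{s} of Case~1.2). The point is that the Case~2 hypothesis together with the bounds already recorded in \eqref{eq3-3} pins the vertex budget down exactly, and everything then falls out of the monotone ordering \eqref{R1}. First I would record that $V_1$, $V_2$, $V_3$ partition the $2k+2$ vertices of $G$, so that $|V_2|+|V_3| = 2k+2-|V_1|$; since we are in Case~2, $|V_1| = k-i_0+1$, and hence $|V_2|+|V_3| = k+i_0+1$.

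Next I would feed in the two lower bounds from \eqref{eq3-3}, namely $|V_2|\ge 2i_0$ and $|V_3|\ge k-i_0+1$. Their sum is exactly $k+i_0+1 = |V_2|+|V_3|$, so no slack remains and both inequalities must be equalities: $|V_2| = 2i_0$ and $|V_3| = k-i_0+1$. The key step is then to read off the individual sizes. Since $V_2 = R_{f_1,1}\cup\cdots\cup R_{f_1,i_0}$ is the union of the parts coloured in the second step, $|V_2| = \sum_{i=1}^{i_0}|R_{f_1,i}|$. By the ordering \eqref{R1} together with $|R_{f_1,i_0+1}|\ge 2$ (again from \eqref{eq3-3}), each summand satisfies $|R_{f_1,i}|\ge |R_{f_1,i_0+1}|\ge 2$; as the sum equals $2i_0$, every one of these $i_0$ terms must equal $2$. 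In particular $|R_{f_1,i_0}|=2$, and sandwiching $2 = |R_{f_1,i_0}| \ge |R_{f_1,i_0+1}| \ge 2$ forces $|R_{f_1,i_0+1}|=2$ as well, giving $|R_{f_1,i}|=2$ for all $1\le i\le i_0+1$. Finally, for $j\ge i_0+2$ the monotonicity \eqref{R1} yields $|R_{f_1,j}|\le |R_{f_1,i_0+1}| = 2$, which is the remaining assertion.

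The main (and quite mild) obstacle is conceptual rather than computational: one must notice that all the genuine work has already been front-loaded into \eqref{eq3-3} and the split \eqref{eq5-1}, so that in Case~2 the lower bounds on $|V_2|$ and $|V_3|$ \emph{saturate} the total vertex count exactly. Once the saturation is spotted, the claim is immediate from the ordering of the $R_{f_1,i}$ and there is no temptation to reconstruct a near acceptable colouring. The one subtlety worth stating explicitly in the write-up is the justification of each $|R_{f_1,i}|\ge 2$ for $i\le i_0$ via \eqref{R1} and $|R_{f_1,i_0+1}|\ge 2$, since it is this monotone lower bound, applied against the equality $\sum_{i\le i_0}|R_{f_1,i}|=2i_0$, that collapses every term to exactly $2$.
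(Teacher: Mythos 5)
Your proof is essentially the paper's own: it too combines the Case 2 value $|V_1|=k-i_0+1$ with the bounds $|V_2|\ge 2i_0$, $|V_3|\ge k-i_0+1$ of (\ref{eq3-3}) and $|V_1|+|V_2|+|V_3|=2k+2$ to force the saturation $|V_2|=2i_0$, $|V_3|=k-i_0+1$, and then reads the residue sizes off the ordering (\ref{R1}) together with $|R_{f_1,i_0+1}|\ge 2$. The only differences are minor: the paper's proof additionally extracts $|S_{f_1,i_0+1}|=1$ from (\ref{eq5-1}) (not part of the claim's statement, but reused in Claims \ref{clm-fclass} and \ref{clm-pr}), while your explicitly written sandwich step tacitly assumes $i_0\ge 1$ --- the boundary case $i_0=0$, where the sum $\sum_{i\le i_0}|R_{f_1,i}|=2i_0$ is vacuous, is glossed over by the paper's terser wording in exactly the same way.
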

		\begin{proof}
			By (\ref{eq3-3}), we have $|V_2|\ge 2i_0$, $|V_3|\ge k-i_0+1$. Since $|V_1|+|V_2|+|V_3| =  2k+2$, we conclude that 
			$$|V_1|=k-i_0+1, \ |V_2|= 2i_0, \ |V_3|= k-i_0+1.$$ So  $\forall j\le i_0+1, \ |R_{f_1,j}|=2, \text{ and }  \forall j \ge i_0+2,  \  |R_{f_1,j}| \le 2.$ 
		\end{proof}

		\begin{claim}
			\label{clm-pr}
			For $1 \le i \le k$, if $|R_{f_1,i}|=2$, then $|S_{f_1,i}|=1$.
		\end{claim}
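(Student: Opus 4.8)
The plan is to prove the two bounds $|S_{f_1,i}|\ge 1$ and $|S_{f_1,i}|\le 1$ separately; the first is immediate, while the second rests on the extremal choice of $f_1$ and on $P_{i_0+1}$ behaving as a \emph{sink}.

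First I would dispose of the lower bound. If $|R_{f_1,i}|=2$ and $|S_{f_1,i}|=0$, then $P_i=R_{f_1,i}$ is a part of size exactly $2$ on which $f_1$ colours nothing, i.e.\ $S_{f_1,i}=\emptyset$, contradicting Claim~\ref{part with size 2}. Hence $|S_{f_1,i}|\ge 1$ whenever $|R_{f_1,i}|=2$.

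For the upper bound I would assume $|S_{f_1,i}|\ge 2$ and seek a contradiction, using the three facts supplied by Claim~\ref{clm-fclass}: every colour of $C-F$ is movable to $P_{i_0+1}$, every $f_1$-class has size at most $2$, and $|S_{f_1,i_0+1}|=1$ (so necessarily $i\neq i_0+1$). Pick $v\in S_{f_1,i}$ and put $c=f_1(v)\in C-F$. If $f_1^{-1}(c)=\{v\}$ is a singleton class, then by Observation~\ref{obs-1} the colouring $f_1(c\to P_{i_0+1})$ is valid with the same value of $\tau_1$, and it shifts one vertex from $S_{f_1,i}$ to $S_{f_1,i_0+1}$; since $|S_{f_1,i}|\ge 2>1=|S_{f_1,i_0+1}|$, this alters $\tau_2=\sum_j|S_{f_1,j}|^2$ by $4-2|S_{f_1,i}|$. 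For $|S_{f_1,i}|\ge 3$ this is strictly negative, contradicting the minimality of $\tau_2$ in the choice of $f_1$.

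The hard part is the remaining configurations, where the above move is neutral or harmful. When $|S_{f_1,i}|=2$ and a singleton class is available the change in $\tau_2$ is exactly $0$, so I would break the tie with the next lexicographic statistic used throughout this section; equivalently, after the move $|R_{f_1,i}|$ rises to $3$, and re-ordering according to (\ref{R1}) and re-running the second step forces the maximality of $i_0$ recorded in (\ref{R2}) to be violated, which is what converts the lateral move into a genuine contradiction. The more serious obstruction is when $S_{f_1,i}$ meets no singleton class, i.e.\ it is covered entirely by size-$2$ classes (a pair inside $P_i$, or vertices whose partners lie in other parts $P_j$); moving such a class wholesale into $P_{i_0+1}$ fills both free slots of $R_{f_1,i_0+1}$ and can \emph{raise} $\tau_2$. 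For these I expect to split on the location of the partner vertex and redistribute among $P_i$, $P_j$ and $P_{i_0+1}$ so as to decrease $\tau_2$ (using $|S_{f_1,i}|\ge 2$ and $|S_{f_1,i_0+1}|=1$), falling back on Lemma~\ref{clm-nocommoncolour} to bound common colours and, where no improving move exists, on a direct construction of a near-acceptable $L$-colouring. I anticipate this size-$2$-class bookkeeping, together with the $i_0$-maximality argument, to be the main technical difficulty.
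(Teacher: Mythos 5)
Your proposal has a genuine gap: it is not a proof but a programme, and the hard cases are explicitly deferred (``I would break the tie\ldots'', ``I expect to split\ldots'', ``I anticipate\ldots to be the main technical difficulty''). Worse, the sketched resolutions do not work as stated. First, there is no ``next lexicographic statistic'' available: in this section $f_1$ is chosen to maximize $\tau(f_1)=(\tau_1(f_1),-\tau_2(f_1))$, a two-component vector, so when your move $f_1(c\to P_{i_0+1})$ leaves $\tau_2$ unchanged (the case $|S_{f_1,i}|=2$ with a singleton class, where the change is $4-2|S_{f_1,i}|=0$) you have no extremal leverage left. Second, your fallback that the lateral move ``forces the maximality of $i_0$ recorded in (\ref{R2}) to be violated'' does not parse: (\ref{R2}) maximizes $i_0$ over orderings and second-step colourings \emph{for the fixed} $f_1$, so producing a different first-step colouring $f_1'$ with a possibly larger $i_0$ contradicts nothing in the setup. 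Third, the size-$2$-class bookkeeping is worse than you suggest in one way and better in another: since $G$ is complete multipartite, every $f_1$-class is an independent set and hence lies inside a single part, so your case of ``vertices whose partners lie in other parts $P_j$'' is vacuous; but moving a size-$2$ class $f_1^{-1}(c)\subseteq P_i$ into $P_{i_0+1}$ changes $\tau_2$ by $12-4|S_{f_1,i}|$, which is \emph{positive} when $|S_{f_1,i}|=2$ and zero when $|S_{f_1,i}|=3$ --- exactly the cases you need, and where no improving exchange of this kind exists.

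The paper's actual proof avoids all of this with a symmetry argument rather than an exchange argument. By Claim \ref{clm-pr0}, $|R_{f_1,1}|=\cdots=|R_{f_1,i_0+1}|=2$, and by Claim \ref{clm-fclass} there are $i_0$ frequent colours $c$ with $m_{i_0+1}(c)=2$, i.e.\ common to all of $R_{f_1,i_0+1}$. Hence for any $j$ with $|R_{f_1,j}|=2$ one may interchange the positions of $P_j$ and $P_{i_0+1}$: (\ref{R1}) still holds since the relevant $R$-sets all have size $2$, and the second step can still be run to colour $i_0$ parts (the $i_0$ frequent colours fully available on $R_{f_1,i_0+1}$ guarantee this), so (\ref{R2}) is satisfied with the same $i_0$. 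The entire chain that produced $|S_{f_1,i_0+1}|=1$ --- $S_{f_1,i_0+1}\neq\emptyset$ via Claim \ref{part with size 2}, then $|V_1-P_{i_0+1}|=k-i_0$ via (\ref{eq5-1}) --- now applies verbatim to $P_j$ in position $i_0+1$, giving $|S_{f_1,j}|=1$. Your observation that Claim \ref{part with size 2} gives the lower bound $|S_{f_1,i}|\geq 1$ is correct but is also subsumed by this argument; the missing idea is precisely that Claim \ref{clm-fclass} licenses the reordering, transferring the already-proved conclusion to $P_j$ instead of trying to improve $f_1$ locally.
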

		\begin{proof}
			By Claim \ref{clm-pr0}, $|R_{f_1,1}| = \ldots = |R_{f_1, i_0+1}|$. As (P2) holds, there are $i_0$ colours $c \in F_1\subseteq F$ such that $R_{i_0+1}(c)=|R_{f_1,i_0+1}|$. Therefore,   for any index $j$ with $|R_{f_1,j}|=2$, if we re-order the parts so that $R_{f_1,j}$ and $R_{f_1, i_0+1}$ interchange positions (while the other parts stay at their position), (R1) and (R2) are satisfied. So the conclusions we have obtained  for $P_{i_0+1}$ hold for $P_j$. In particular,   for any $j$ with $|R_{f_1,j}|=2$, we have $|S_{f_1,j}|=1$.  
		\end{proof}
		
		\begin{claim}
			\label{clm-s}
			$|S_{f_1,j}| \le 2$ for all $j$.
		\end{claim}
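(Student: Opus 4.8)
The plan is to argue by contradiction against the maximality of $\tau(f_1)=(\tau_1(f_1),-\tau_2(f_1))$, exactly in the spirit of Claim~\ref{s}, but with more care in the arithmetic because the part we move a colour into now already carries one coloured vertex rather than none. Suppose $|S_{f_1,j_0}|\ge 3$ for some $j_0$. First I would record two structural facts. By Claim~\ref{clm-fclass} every colour $c\in C-F$ satisfies $|f_1^{-1}(c)|\le 2$; and since $G$ is complete multipartite, each colour class of $f_1$ lies inside a single part. Hence $S_{f_1,j_0}$ splits into $f_1$-classes, each contained in $P_{j_0}$ and of size $1$ or $2$, so $P_{j_0}$ uses at least two colours. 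Moreover $j_0\ne i_0+1$, because $|S_{f_1,i_0+1}|=1$ by Claims~\ref{clm-pr0} and~\ref{clm-pr}.

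Next I would redistribute one colour. Choose $c_1\in f_1(S_{f_1,j_0})$ with $b:=|f_1^{-1}(c_1)|$ minimum, and set $s_0:=|S_{f_1,j_0}|\ge 3$. By Claim~\ref{clm-fclass}, $c_1$ is movable to $P_{i_0+1}$, so $f_1'=f_1(c_1\to P_{i_0+1})$ is a valid partial $L$-colouring with $\tau_1(f_1')=\tau_1(f_1)$. The only parts whose coloured sets change are $P_{j_0}$, which loses the $b$ vertices of $f_1^{-1}(c_1)$, and $P_{i_0+1}$. For $P_{i_0+1}$ I would verify that its coloured set grows from size $1$ to size $1+b$: the $b=m_{i_0+1}(c_1)$ vertices of $L^{-1}(c_1)\cap R_{f_1,i_0+1}$ become coloured, while the single previously coloured vertex of $P_{i_0+1}$ stays coloured (it is merely recoloured to $c_1$ if it lies in $L^{-1}(c_1)$, and is untouched otherwise). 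Consequently $\tau_2(f_1')-\tau_2(f_1)=(s_0-b)^2+(1+b)^2-s_0^2-1=2b(1+b-s_0)$.

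Finally I would check this change is strictly negative, i.e.\ that $s_0-b\ge 2$. Since $c_1$ is a smallest class and $S_{f_1,j_0}$ contains at least two classes: if $b=1$ then $s_0-b\ge 3-1=2$; if $b=2$ then every class has size $2$, so $s_0$ is even and at least $4$, again giving $s_0-b\ge 2$. Hence $\tau_2(f_1')<\tau_2(f_1)$ while $\tau_1(f_1')=\tau_1(f_1)$, contradicting the choice of $f_1$ and proving $|S_{f_1,j}|\le 2$ for all $j$. I expect the main obstacle to be the bookkeeping in the middle step: one must confirm that, regardless of whether the pre-coloured vertex of $P_{i_0+1}$ belongs to $L^{-1}(c_1)$, the move keeps $\tau_1$ fixed and sends $|S_{\cdot,i_0+1}|$ to exactly $1+b$, so that the tight margin created by the base value $1$ (rather than $0$ as in Claim~\ref{s}) still admits a strict decrease once $c_1$ is chosen to be a smallest class.
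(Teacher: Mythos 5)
Your proof is correct and follows essentially the same route as the paper: the paper also picks a colour class inside $P_{j_0}$ of size $1$ (or size $2$ when $|S_{f_1,j_0}|\ge 4$), moves it to $P_{i_0+1}$ via Claim~\ref{clm-fclass}, and contradicts the maximality of $\tau(f_1)$, which is exactly your choice of a smallest class $c_1$. You have merely made explicit the bookkeeping the paper leaves implicit (the identity $\tau_2(f_1')-\tau_2(f_1)=2b(1+b-s_0)$, the fact that $|S_{f_1,i_0+1}|=1$ grows to $1+b$, and the case split ensuring $s_0-b\ge 2$), all of which checks out.
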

		\begin{proof}
		{As $(P1)$ holds for $i_0+1$, $|f_1^{-1}(c)|=R_{i_0+1}(c)\le |R_{f_1,i_0+1}|=2$ for any $c\in C_L-F$.}	If $|S_{f_1,j}| \ge 3$ for some $j$, then there is a colour $c \in C_L-F$ for which the following holds:
			\begin{itemize}
				\item   $|f_1^{-1}(c) \cap P_j| =1$, or
				\item  $|S_{f_1,j}| \ge 4$, and $|f_1^{-1}(c) \cap P_j| =2$.
			\end{itemize} 
			Let $$f'_1 = f_1(c \to P_{i_0+1}).$$
			Then  $f'_1$ is a valid partial $L$-colouring of $G$ with $\tau_1(f'_1) = \tau_1(f_1)$ (as (P1) holds).
   By (\ref{eq-si01}), $|S_{f_1, i_0}|=1$. Thus either $|S_{f'_1,j}| = |S_{f_1,j}|-1 \ge 2$ and $|S_{f'_1, i_0+1}|=2$, or $|S_{f'_1,j}| = |S_{f_1,j}|-2 \ge 2$ and $|S_{f'_1, i_0+1}|=3$.
			Hence $\tau_2(f'_1 )) < \tau_2(f_1)$, in contrary to our choice of $f_1$. 
		\end{proof}
		
		It follows from Claims \ref{clm-pr} and \ref{clm-s} that 
		each part of $G$ is $3^-$-part, in contrary to Theorem \ref{thm-main2}.
		
		This completes the proof of Lemma \ref{lem-kfrequent}.
  \end{proof}

		\section{Tighter upper bound for the number of frequent colours}
		%\section{With fewer frequent colours}
		\label{sec-fewer}

		In this section and the next section, we assume that $(G,L)$ is a minimum counterexample to Theorem \ref{thm-main} with $\sum_{v \in V(G)}|L(v)|$ maximum. 

  This section proves that there are at most $k-p_1-1$ frequent colours. Assume to the contrary that there are  $k-p_1$ frequent colours. We shall construct another $k$-list assignment $L'$ of $G$ that has $k$ frequent colours. By Lemma \ref{lem-kfrequent}, $(G,L')$ is not a counterexample to Theorem \ref{thm-main}. Hence there is an $L'$-colouring $f$ of $G$. Using this colouring $f$, we construct a near-acceptable $L$-colouring of $G$, which contradicts Lemma \ref{lem-near}.

  Let $F$ be the set of frequent colours, and $F' \subseteq F $ be the set of frequent colours of Type (1). 
  %We shall prove that $|F| \le k-p_1-1$.
		
		%Let $S = \{v: \{v\} \text{ is a singleton of $G$} \}$. 
		By Lemma \ref{lem-kfrequent}, we may assume that $|F| \le k-1$. 
		If $\lambda =1$, then for any $v \in T$, all colours in $L(v)$ are frequent of Type (2), a contradiction (note that $p_1 \ge 3$, so $T \ne \emptyset$). 
		Thus  $\lambda \ge 2$. 
		
		\begin{lemma}
			\label{clm-lambdap1}
			$\lambda \le p_1+1$.
		\end{lemma}
		\begin{proof}
			For   $c\in C_L-F'$, by definition, $|L^{-1}(c)|\le k+1$. By Lemma \ref{lem-noncommoncolour},   for each $c \in F'$, $|L^{-1}(c)|\le k+p_1+2$. Therefore
			\begin{eqnarray*}
				k|V|\le \sum_{v\in V}|L(v)|=\sum_{c\in C_L}| L^{-1}(c)|\le |F'|(k+p_1+2)+|C_L-F'|(k+1).
			\end{eqnarray*}
			Hence
			\begin{equation}
				|F'|\ge \frac{k|V|-(k+1)|C_L|}{p_1+1}=\frac{k\lambda-|C_L|}{p_1+1}.   \label{eqn5-1}
			\end{equation} 
			As $|F'| < k$, we have   \begin{equation}
   \label{eq-cl}
   |C_L| >  k (\lambda-  p_1 -1).
   \end{equation} 
			Since $\lambda \ge 2$, we have $|C_L| \le 2k$. Plug this into (\ref{eq-cl}),  we have  $\lambda\le p_1+2$.
			
			If $\lambda=p_1+2$, then $|C_L|=|V| - \lambda = 2k+2 -(p_1+2) =2k-p_1\le 2k-3$ (as $p_1\ge 3$). This implies that  $G$ has no 2-part (if $\{u,v\}$ is a 2-part of $G$, then $L(u) \cap L(v) = \emptyset$ and hence $|C_L| \ge 2k$). By (\ref{eq-v}),  $2k-1 = |V|-3 \ge 3(k-p_1)+p_1$.
			Hence
			\begin{equation}
				p_1 \ge\frac{k+1}{2}.   \label{eq6-1}
			\end{equation} 
			
			By (\ref{eqn5-1}), 
			$$|F'|\ge \frac{k\lambda-|C_L|}{p_1+1}= \frac{k(p_1+2)-(2k-p_1)}{p_1+1} =\frac{(k+1)p_1}{p_1+1}=k-\frac{k-p_1}{p_1+1}> k-1.$$
			Hence  $|F'|\ge k$, a contradiction.
			Thus  $\lambda \le p_1+1$. 
		\end{proof}
		
		\begin{lemma}
			\label{clm-F}
			$F = \bigcap_{v \in T}L(v)$.
		\end{lemma}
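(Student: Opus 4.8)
The plan is to prove the two assertions in order, establishing $\lambda \le p_1 + 1$ first, since the inclusion $\bigcap_{v\in T}L(v)\subseteq F$ will depend on it. Throughout I would use the standing assumption $|F|\le k-1$, the structural inequalities in (\ref{eq-v}) (so in particular $p_1\ge 3$ and $p_3^+\le p_1-1$), and the bound $|L^{-1}(c)|\le k+p_1+2$ from Lemma \ref{clm-nocommoncolour}.

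For $\lambda\le p_1+1$ I would argue by contradiction, assuming $\lambda\ge p_1+2$. First I would dispose of the case $p_2\ge 1$: a $2$-part $\{u,w\}$ satisfies $L(u)\cap L(w)=\emptyset$ by Lemma \ref{clm-nocommoncolour}, so $|C|\ge |L(u)|+|L(w)|\ge 2k$ and $\lambda\le 2$, contradicting $\lambda\ge p_1+2\ge 5$. Hence $p_2=0$, and then (\ref{eq-v}) gives $k=p_1+p_3^+\le 2p_1-1$, i.e.\ $p_1\ge (k+1)/2$. The key observation is that under $\lambda\ge p_1+2$ every frequent colour must be of Type (1): Type (2) is impossible because $|L^{-1}(c)\cap T|\le |T|=p_1<\lambda$, and Type (3) is impossible because $|T|=p_1\le\lambda-2\ne\lambda-1$. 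Thus each of the $|F|\le k-1$ frequent colours has $|L^{-1}(c)|\le k+p_1+2$, while each non-frequent colour has $|L^{-1}(c)|\le k+1$. Summing over all colours and using $|C|=2k+2-\lambda\le 2k-p_1$,
\[
(2k+2)k\le\sum_{c\in C}|L^{-1}(c)|\le (k+1)|C|+|F|(p_1+1)\le (k+1)(2k-p_1)+(k-1)(p_1+1)=2k^2+3k-2p_1-1,
\]
which forces $2p_1+1\le k$, i.e.\ $p_1\le (k-1)/2$, contradicting $p_1\ge (k+1)/2$.

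For the identity $F=\bigcap_{v\in T}L(v)$ I would prove the two inclusions separately. For $\bigcap_{v\in T}L(v)\subseteq F$, take $c$ lying in every singleton list; then $|L^{-1}(c)\cap T|=p_1$, and since $\lambda\le p_1+1$ we have either $p_1\ge\lambda$ (so $c$ is frequent of Type (2)) or $p_1=\lambda-1\ge 1$ (so $c$ is frequent of Type (3)); either way $c\in F$. For the reverse inclusion $F\subseteq\bigcap_{v\in T}L(v)$ I would combine the maximality of the counterexample with Lemma \ref{lem-near}. Suppose $c\in F$ but $c\notin L(v_0)$ for some singleton $v_0$. By the maximality of $\sum_{v}|L(v)|$, enlarging $L(v_0)$ to $L(v_0)\cup\{c\}$ produces an instance (with the same $k$ and the same $|C|$) that is colourable, so there is a proper colouring $g$ of $G$ with $g(v)\in L(v)$ for all $v\ne v_0$ and $g(v_0)=c$ (otherwise $g$ would already be an $L$-colouring of $G$). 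Since $v_0$ is a singleton part, colour $c$ is used on $v_0$ only, so $\{v_0\}$ is a singleton $g$-class and every $g$-class of size at least $2$ has its colour in the lists of all its members; hence $g$ is a pseudo $L$-colouring whose unique badly coloured vertex is $v_0$, coloured by the frequent colour $c$. Thus $g$ is near acceptable, and Lemma \ref{lem-near} yields an $L$-colouring of $G$, a contradiction. Therefore $c\in L(v_0)$ for every singleton $v_0$.

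The step I expect to be the main obstacle is the bound $\lambda\le p_1+1$: the delicate points are recognising that the hypothesis $\lambda\ge p_1+2$ forces every frequent colour to be of Type (1) (which is what makes the assumption $|F|\le k-1$ usable in the counting), and that the subcase $p_2\ge 1$ must be cleared away first so that the structural inequality $p_1\ge (k+1)/2$ is available to finish. By comparison, the reverse inclusion in the second part is short, once one notices that the list-augmented colouring supplied by maximality is automatically a near acceptable $L$-colouring.
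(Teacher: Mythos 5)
Your proposal is correct and takes essentially the same route as the paper: the same double count of $\sum_{c\in C}|L^{-1}(c)|$ using Lemma \ref{clm-nocommoncolour} together with $|F|\le k-1$, the same observation that a $2$-part would force $|C|\ge 2k$ so that (\ref{eq-v}) yields $p_1\ge (k+1)/2$, and the identical list-augmentation argument (maximality of $\sum_v|L(v)|$ plus Lemma \ref{lem-near}) for $F\subseteq\bigcap_{v\in T}L(v)$, with your verification that the augmented colouring is a near acceptable pseudo-$L$-colouring correctly filling in what the paper leaves implicit. The only differences are cosmetic: you run the counting once under the assumption $\lambda\ge p_1+2$ where the paper first proves $\lambda\le p_1+2$ and then excludes equality, and your ``key observation'' that all frequent colours are of Type (1) is in fact superfluous, since the bounds $|L^{-1}(c)|\le k+1$ for $c\notin F$ and $|L^{-1}(c)|\le k+p_1+2$ for all $c$ already suffice.
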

		\begin{proof}
			If $p_1=\lambda-1$, then	each colour in $ \bigcap_{v \in T}L(v)$ is contained in at least $\lambda-1$ singleton lists, and hence is a frequent colour of Type (3).
			
			If $p_1\ge \lambda$, then each colour in $ \bigcap_{v \in T}L(v)$ is contained in at least $\lambda$ singleton lists, and hence is a frequent colour of Type (2).
			
			In any case,  $$ \bigcap_{v \in T}L(v)\subseteq F.$$
			
			On the other hand,	assume there is a frequent colour $c \notin \bigcap_{v \in T}L(v)$, say $c \notin L(v)$ for some $v \in T$, then  let $L'$ be the list assignment of $G$ defined as $L'(x) = L(x)$ for $x \ne v$ and $L'(v) =L(v) \cup \{c\}$. By our assumption that $(G,L)$ is a minimum counterexample with $\sum_{v \in V(G)}|L(v)|$ maximum,  $G$ and $L'$ is not a counterexample to Theorem \ref{thm-main}. So $G$ has an $L'$-colouring $f$. But then $f$ is a near acceptable $L$-colouring of $G$, in contrary to Lemma \ref{lem-near}. Therefore
			$F \subseteq \bigcap_{v \in T}L(v).$ 
		\end{proof}

		\begin{lemma}
			\label{lem-kfrequent}
			There are at most $k-p_1-1$  frequent colours.
		\end{lemma}
		\begin{proof}
			Assume to the contrary that  $\{c_{p_1+1}, c_{p_1+2}, \ldots, c_k\}$ is a  set of $k-p_1$ frequent colours.

			Assume $T=\{v_1, v_2, \ldots, v_{p_1}\}$. We choose $p_1$ colours $c_1, c_2, \ldots, c_{p_1}$ so that for $i=1,2,\ldots, p_1$,  
			$$c_i \in L(v_i) -\{c_{p_1+1}, \ldots, c_k\}- \{c_1, \ldots, c_{i-1}\}.$$
			As $|L(v_i)| \ge  k$,   the colour $c_i$ exists.

			Let $C'=\{c_1, c_2, \ldots, c_k\}$ and define $L'$ as follows:
			\[
			L'(v)=
			\begin{cases}
				C' &\text{if $v \in T$}, \cr
				L(v) &\text{otherwise}.
			\end{cases}	
			\]
			
			By Lemma \ref{clm-lambdap1},  $p_1\ge \lambda-1$. If $p_1\ge \lambda$, then each colour in $C'$ is Type-2 frequent with respect to $L'$. If  $p_1=\lambda-1$, then each colour in $C'$ is Type-3 frequent with respect to $L'$.  
   By  Lemma \ref{lem-kfrequent}, 
			$(G, L')$ is not a mimnimum counterexample to Theorem \ref{thm-main}. Since $C_{L'} \subseteq C_L$, we know that $(G,L')$ is not a counterexample to Theorem \ref{thm-main}. Hence $G$ has an $L'$-colouring $f$. 
			
			Note that if $v \notin T$, then $f(v) \in L(v)$. 
			We shall modify $f$ to obtain  a near acceptable $L$-colouring of $G$.
			
			Let $T'=\{v_i: 1 \le i \le p_1, c_i\in f(T)\}$.
			As   $|T-T'|=|f(T)-\{c_1, c_2, \ldots, c_{p_1}\}|$,
			there is a bijection $g: T-T'\to f(T)-\{c_1, c_2, \ldots, c_{p_1}\}$. 
			
			Let $f':V\to C_L$ be defined as follows:
			\[
			f'(v)=
			\begin{cases}
				f(v) &\text{ if $v\notin T$ }, \cr
				c_i &\text{ if $v=v_i\in T'$ }, \cr
				g(v) &\text{ if $v\in T-T'$ }.
			\end{cases}	
			\]
			Then $f'$ is a near acceptable $L$-colouring of $G$, in contradiction to Lemma \ref{lem-near}.
		\end{proof}

		\section{Final contradiction}

		We shall find a subset $X$ of $T$   and a set $F''$ of $k-p_1$ colours so that for each $c \in F''$,   
		\begin{equation*}
			%\label{eq4-5}
			|L^{-1}(c) \cap X|\ge \lambda.
		\end{equation*} 
		This would imply that all the $k-p_1$ colours in $F''$ are frequent (of Type (2)). This is in contrary to  Lemma \ref{lem-kfrequent}. 
		
		For any colour $c \in C_L-F$, $|L^{-1}(c)| \le k+1$.  Let
		$$b = \min \{k+1-|L^{-1}(c)|: c \in C_L-F\}.$$

		\begin{lemma}
			\label{lem-X}
			There is a  subset $X$ of $T$ such that 
			\begin{itemize}
				\item[(1)] $|X| \ge p_1-\lambda+1$.
				\item[(2)] $|L(X)|\le k+b$.
			\end{itemize}
			Moreover, if $b=0$ or $p_1=\lambda-1$, then $|X| \ge p_1-\lambda+2$.
		\end{lemma}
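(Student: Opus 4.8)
Since the statement concerns only the singletons, the first thing I would do is translate it into a deletion problem. Writing $C'=C-F$ and recalling from Lemma~\ref{clm-F} that $F=\bigcap_{v\in T}L(v)$, every nonempty $X\subseteq T$ satisfies $F\subseteq L(X)$, so $|L(X)|=|F|+|L(X)\cap C'|$ and only the $C'$-colours matter. Asking for $|X|\ge p_1-\lambda+1$ is the same as deleting at most $\lambda-1$ singletons, and the target $|L(X)|\le k+b$ becomes $|L(X)\cap C'|\le k+b-|F|$. I would then record the two facts that drive everything: (i) by the definition of $b$, every $c\in C'$ satisfies $|L^{-1}(c)|\le k+1-b$; and (ii) since no $c\in C'$ is frequent, in particular none is of Type~(2), each $c\in C'$ has $T$-support $|L^{-1}(c)\cap T|\le\lambda-1$. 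I would keep at hand the standing data $|C|=2k+2-\lambda$, $\lambda\le p_1+1$, $p_1\ge 3$ from \eqref{eq-v}, and---crucially---$|F|\le k-p_1-1$, which we may assume by Lemma~\ref{lem-k-sfrequent} (otherwise $G$ is $L$-colourable).

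The backbone is a double count forcing heavy overlap among the singleton lists. For any $X\subseteq T$ we have $\sum_{v\in X}|L(v)\cap C'|=\sum_{c\in L(X)\cap C'}|L^{-1}(c)\cap X|$; the left side is at least $(k-|F|)|X|$ because each list has at least $k$ colours and meets $F$ in exactly $|F|$ of them, while each summand on the right is at most $\lambda-1$ by~(ii). Hence $|L(X)\cap C'|\ge (k-|F|)|X|/(\lambda-1)$ for \emph{every} $X$. Writing $m=k-|F|\,(\ge p_1+1)$, the colour bound we want is thus equivalent to producing an $X$ of size $p_1-\lambda+1$ that essentially attains this lower bound, i.e.\ one whose $C'$-colours are each shared by close to $\lambda-1$ of its singletons. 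I would construct such an $X$ greedily: build a colour set $S\supseteq F$ by repeatedly adjoining the $C'$-colour of largest remaining $T$-support, and take $X=\{v\in T:L(v)\subseteq S\}$, so that the omitted singletons are exactly those carrying a low-support colour outside $S$; the size estimate then reduces to bounding by $\lambda-1$ the number of singletons left out.

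Where this becomes delicate---and where I expect the real work---is verifying the bound in the tight regime, which is also the source of the refinement. The generic double count alone is insufficient (it limits how large $X$ can be, not how small), so I would close the argument by analysing equality: if deleting $\lambda-1$ singletons still left more than $k+b-|F|$ colours of $C'$, tracing equality back through~(i), (ii) and the budget $|C|=2k+2-\lambda$ would force almost all singleton lists to coincide, pushing a near-$k$ set of colours to full $T$-support $p_1\ge\lambda$ and hence into $F$, contradicting $|F|\le k-p_1-1$. The same tightness analysis, run with one fewer deletion, yields the ``moreover'' gain of $+1$: when $b=0$ a colour $c^{*}\in C'$ with $|L^{-1}(c^{*})|=k+1$ pins the extremal structure, and when $p_1=\lambda-1$ the base claim is vacuous ($X=\emptyset$) so one only needs a single singleton of minimum list-size at most $k+b$, which I would extract by a short counting argument using the definition of $b$ and Lemma~\ref{clm-nocommoncolour}. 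The main obstacle is therefore not the counting but the case analysis excluding these degenerate equality configurations, for which the assumption $|F|\le k-p_1-1$ together with Lemma~\ref{clm-F} is exactly what is needed.
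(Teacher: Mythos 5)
Your plan has a genuine gap: it never uses the two ingredients that actually drive the paper's proof, and the purely list-counting machinery you set up cannot deliver the conclusion. The paper's argument is not a counting argument at all. It picks a colour $c'\in C-F$ attaining the minimum in the definition of $b$ (so $|L^{-1}(c')|=k+1-b$), uses Lemma \ref{clm-F} to find $w\in T$ with $c'\notin L(w)$, and then invokes the \emph{extremal choice of the counterexample} --- $(G,L)$ was chosen with $\sum_{v\in V}|L(v)|$ maximum --- to conclude that $G$ is $L'$-colourable when $c'$ is added to $L(w)$. The resulting colouring is a pseudo-$L$-colouring with exactly one badly coloured vertex, and Hall's theorem produces the obstruction set $X_f$ with $c'\notin Y_f$; since every $f$-class in $X_f$ then contains a vertex missing $c'$, one gets $|X_f|\le |V|-|L^{-1}(c')|=k+1+b$, and taking $X$ to be the singletons of $G$ lying in $X_f$ gives $|L(X)|\le |Y_f|<|X_f|\le k+1+b$. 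The size bound $|X|\ge p_1-\lambda+1$ comes from Lemma \ref{lem-added1}, the improvement when $p_1=\lambda-1$ from Lemma \ref{lem-added}, and the improvement when $b=0$ from Lemma \ref{lem-added1}(4) (which forces $|Y_f|\ge k+1$, contradicting $|X_f|\le k+1$). Your proposal invokes none of this: not the maximality of $L$, not Lemmas \ref{lem-added1} and \ref{lem-added}, not the Noel--Reed--Wu theorem hiding inside them. This matters because the inequality $|L(X)|\le k+b$ ties the singleton lists to a \emph{global} quantity ($\max_{c\in C-F}|L^{-1}(c)|$, counted over all of $V$, not just $T$), and it is exactly the Hall-obstruction step that creates this link.

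Concretely, the steps you leave as "tightness analysis" are the entire proof, and the facts you allow yourself do not suffice to carry them out. Your double count, as you concede, bounds $|L(X)|$ from \emph{below}; nothing in (i), (ii) and the budget $|C|=2k+2-\lambda$ prevents a configuration in which the sets $L(v)\cap(C-F)$, $v\in T$, are spread out with pairwise overlaps at most $\lambda-1$, so that \emph{every} $(p_1-\lambda+1)$-subset of $T$ has $|L(X)|$ far exceeding $k+b$; such configurations are consistent with all your hypotheses, and ruling them out requires the non-colourability/extremality input you omit, so no greedy choice of $S$ and no "equality forces lists to coincide" argument can close the gap. There is also no mechanism for your claim that the omitted singletons number at most $\lambda-1$: in the paper this is precisely Lemma \ref{lem-added1}, proved via Noel--Reed--Wu applied to $G'=G-A$, not via supports of colours. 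Finally, even your "easy" case $p_1=\lambda-1$ is not a short counting argument: you need a single $v\in T$ with $|L(v)|\le k+b$, but lists may be larger than $k$ and cannot be truncated, since $L$ was chosen with $\sum_v|L(v)|$ maximum (truncation could destroy the counterexample property); the paper instead derives this case from Lemma \ref{lem-added}, whose proof uses the structural inequality \eqref{eq-v}. In short: the route you sketch is genuinely different from the paper's, but as it stands it is a plan whose decisive steps are unproved and, with only the tools you permit yourself, unprovable.
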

		\begin{proof}
			Let $c' \in C_L-F$ be a colour with $|L^{-1}(c')|=k+1-b$. By Lemma \ref{clm-F}, there is a vertex   $w \in T$ such that $c' \notin L(w)$. 
			Define a list assignment $L'$ as follows:
			\[
			L'(v)=
			\begin{cases}
				L(v)\cup \{c'\} &\text{$v=w$ }, \cr
				L(v) &\text{otherwise}.
			\end{cases}	
			\]
			By the maximality of $\sum_{v \in V(G)}|L(v)|$, $G$ has an   $L'$-colouring $f$. 
			We must have $f(w) = c'$  and $w$ is the only  badly coloured vertex, for otherwise $f$ is a proper $L$-colouring of $G$. 
			
			Now $f$ is a pseudo $L$-colouring of $G$. By Lemma \ref{lem-added1}, in the bipartite graph $B_f$, $V_f$ has a subset $X_f$ such that  $|X_f| > |Y_f|=|N_{B_f}(X_f)|$, and
			$V_f-X_f$ contains at most $\lambda-1$ singletons  of $G$.
			
			%	This follows from  Lemma \ref{lem-added1} directly.

			%\item  $V_f-X_f$ contains at most $\lambda-2$ singletons of $G$ if $|L^{-1}(c)|=k+1$ or $s=\lambda-1$.

			%If $s=\lambda-1$, then this follows from  Lemma \ref{lem-added} directly.
			
			%	If $|L^{-1}(c)|=k+1$, then $|X_f|\le |V(G)|-|L^{-1}(c)|=k+1$ and hence $|N_B(X_f)|\le k$. By  Lemma \ref{lem-added1}, $V_f-X_f$ contains at most $\lambda-2$ singletons of $G$.
			%\end{enumerate} 
			
			It is easy to see that    $w\in X_f$ and $c'\notin Y_f$.
			Let $$X = \{v \in T: \{v\} \text{ is an $f$-class in $X_f$}\}.$$
			Then $|X|=|T|-|(V_f-X_f)\cap T|\ge p_1-\lambda+1 $ and by Lemma \ref{lem-added}, if $p_1=\lambda-1$, then $|X|=|T|-|(V_f-X_f)\cap T|\ge p_1-\lambda+2 $.
			
			Since each $f$-class in $X_f$ contains a vertex $v$ for which $c' \notin L(v)$, we have
			$$|L(X)|\le |Y_f|<|X_f|\le |V|-| L^{-1}(c')| = k+1+b.$$
			So $|L(X)|\le k+b.$
			
			It remains to prove that if $b=0$, i.e., $|L^{-1}(c')|=k+1$, then $|X| \ge p_1-\lambda+2$. 
			
			Assume to the  contrary that $|L^{-1}(c')|=k+1$ and  $|X| = p_1-\lambda+1$. 
			By Lemma \ref{lem-added1}, $|Y_f|\ge k+1$ and hence $|X_f|\ge k+2$, in contrary to $|X_f|\le |V|-| L^{-1}(c')| = k+1$.

			This   completes the proof of Lemma \ref{lem-X}.
		\end{proof}

		We order the colours in $L(X)$ as $c_1,c_2, \ldots, c_t$, so that $$|L^{-1}(c_1) \cap X| \ge |L^{-1}(c_2) \cap X| \ge \ldots \ge |L^{-1}(c_t) \cap X|,$$ 
		where $t=|L(X)|$. Let $F''=\{c_1, c_2, \ldots, c_{k-p_1}\}$. 
		
		It suffices to show that $$|L^{-1}(c_{k-p_1}) \cap X| \ge \lambda,$$
		and hence  each colour $c_i \in F''$ is a frequent of Type (2).

		Let $Z=\{c_{k-p_1},c_{k-p_1+1}, \ldots, c_t\}$.
		For each $v\in X$, 
		$|L(v) \cap Z| \ge |L(v)| - (k-p_1-1) \ge  p_1+1$. 
		Hence  
		\begin{equation}
			\label{eq-z}
			|Z||L^{-1}(c_{k-p_1}) \cap X| \ge \sum_{i=k-p_1}^t |L^{-1}(c_i) \cap X| =\sum_{v \in X} |L(v) \cap Z|   \ge |X|(p_1+1).
		\end{equation} 
		
		% Let $\beta=k+1-| L^{-1}(c')|$. Since $c'$ is non-frequent, $c'$ is contained in at most $k+1$ lists and hence $\beta\ge 0$. 
		By Lemma  \ref{lem-X},   $$|Z|= |L(X)|-(k-p_1-1)\le  p_1+1+b.$$  
		Plugging this into (\ref{eq-z}), we have 
		% 
		% Hence by Lemma \ref{lem-X}, 
		\begin{equation*}
			\label{eq-x}
			(p_1+1+b)|L^{-1}(c_{k-p_1}) \cap X| \ge |X|(p_1+1).
		\end{equation*}
		
		This implies that 
		\begin{equation}
			\label{eq-b}
			|L^{-1}(c_{k-p_1}) \cap X|\ge \frac{ |X|(p_1+1)}{p_1+1+b}.
		\end{equation}
		
		For each $c \in C_L-F$, $| L^{-1}(c)| \le k+1-b$ (by definition of $b$). By Lemma \ref{lem-noncommoncolour},   for $c \in F$, $| L^{-1}(c)| \le k+p_1+2$. Hence
		\begin{equation}
			\label{eq-b2}
			(2k+2)k \le \sum_{v\in V}|L(v)|=\sum_{c\in C_L}| L^{-1}(c)|\le |C_L-F|(k+1-b)+|F|(k+p_1+2).
		\end{equation}
		Plugging  $|C_L|=|V|-\lambda=2k+2-\lambda$ and $|F|\le k-p_1-1$ into (\ref{eq-b2}), we have 
		\begin{equation}
			\label{eq-b3}
			(2k+2)k\le (2k+2-\lambda-(k-p_1-1))(k+1-b)+(k-p_1-1)(k+p_1+2).
		\end{equation}  
		(Note that the coefficient of $|F|$ in the right hand side of (\ref{eq-b2}) is positive.)
		
		This implies 
		\begin{equation}
			\label{eq-b4}
			b \le \frac{(p_1+3-\lambda-k)(k+1)+(k-p_1-1)(k+p_1+2)}{k+p_1+3-\lambda}.  
		\end{equation}
		
		If $\lambda=2$, then since $p_1 \ge 3$, by plugging $|X|\ge p_1-\lambda+1$ (see Lemma \ref{lem-X}) into (\ref{eq-b}), we have  $$  |L^{-1}(c_{k-p_1}) \cap X|\ge \frac{ (p_1-\lambda+1)(p_1+1)}{p_1+1+b}\ge \frac{ (p_1-1)(p_1+1)}{p_1+1+\frac{(p_1+1)(k-p_1-1)}{k+p_1+1} }=\frac{(p_1-1)(k+p_1+1)}{2k}\ge\frac{2(k+p_1+1)}{2k} >1.$$

		Since $|L^{-1}(c_{k-p_1}) \cap X|$ is an integer,  $|L^{-1}(c_{k-p_1}) \cap X|\ge 2 =\lambda$ and we are done. 
		
		Therefore   $\lambda \ge 3$ and $|C_L| \le 2k-1$. 
		By Lemma \ref{lem-noncommoncolour},  $G$ has no 2-parts. 
		By the same reason as (\ref{eq6-1}), we have
		\begin{equation*}
			p_1 \ge\frac{k+1}{2}. 
		\end{equation*}

		% If $\lambda=3$, then $|C|=2k-1$ and there is no 2-parts, and a similar calculation shows that 
		% $p_1 < 4+2/3$ and hence $p_1 \le 4$ and $k \le 7$. Such cases can be checked directly
		
		% Assume $\lambda \ge 4$. 
		
		Combining  (\ref{eqn5-1}) with Lemma \ref{lem-kfrequent}, together with $p_1 \ge\frac{k+1}{2}$,  we have $$\frac {k-3}2 \ge  k-p_1-1\ge |F'|\ge \frac{k\lambda-|C_L|}{p_1+1} = \frac{k\lambda -(2k+2-\lambda)}{p_1+1}=\frac{(k+1)\lambda-2k-2}{p_1+1}.$$   Hence  
		\begin{eqnarray*}
			\lambda \le   \frac{   \frac{(k-3)(p_1+1)}{2}  +2k+2}{k+1}  = \frac{p_1+1}{2}+2-\frac{2(p_1+1)}{k+1} < \frac{p_1+1}{2}+1.
		\end{eqnarray*}
		Since $\lambda$ is an integer,
		\begin{equation}
			\lambda\le \frac{p_1}{2}+1.  \label{eq-lambda}
		\end{equation}
		Therefore $$p_1 \ge 2 \lambda-2 \ge \lambda+1.$$  
		Plugging this into (\ref{eq-b4}), we have
		\begin{eqnarray*}
			b &\le& \frac{(p_1+3-\lambda-k)(k+1)+(k-p_1-1)(k+p_1+2)}{k+p_1+3-\lambda}\\
			&\le&\frac{(p_1+3-\lambda-k)(k+1)+(k-p_1-1)(k+p_1+2)}{k+4} \  \ ( \text{ as $p_1\ge \lambda+1$ })\\
			&=&\frac{(p_1+1)(k-p_1-1)+(k+1)(2-\lambda)}{k+4}\\
			&\le &\frac{\frac{k-3}{2}(p_1+1)+(k+1)(2-\lambda)}{k+4} \ \ ( \text{by (\ref{eq6-1}),  {i.e., $p_1\ge \frac{k+1}{2}$ }})\\
			&=&\frac{1}{2}(p_1+1-2\lambda)+\frac{2k+2+3\lambda-\frac{7}{2}(p_1+1)}{k+4}\\
			&\le &\frac{1}{2}(p_1+1-2\lambda)+\frac{k+1/2}{k+4} \\
			&< &\frac{1}{2}(p_1+1-2\lambda)+1.
		\end{eqnarray*}
		
		It follows from (\ref{eq-lambda}) that $p_1 \ge 2\lambda -2$.
		
		If $p_1\in \{2\lambda-2, 2\lambda-1\}$, then $b=0$. This implies that $|X|\ge p_1-\lambda+2$.
		
		It follows from (\ref{eq-b}) that 
		$$|L^{-1}(c_{k-p_1}) \cap X|\ge  \frac{ |X|(p_1+1)}{p_1+1+b}\ge \frac{ (p_1-\lambda+2)(p_1+1)}{p_1+1}\ge \lambda.$$

		If $p_1\ge 2\lambda$, then $$b\le \frac{1}{2}(p_1+1-2\lambda)+\frac{1}{2}\le \frac{1}{2}(p_1+1-2\lambda)+\frac{1}{2}(p_1+1-2\lambda)= p_1+1-2\lambda.$$
		
		Hence 	$$|L^{-1}(c_{k-p_1}) \cap X|\ge  \frac{ (p_1-\lambda+1)(p_1+1)}{p_1+1+b} \ge\frac{ (p_1-\lambda+1)(p_1+1)}{2(p_1+1-\lambda)}=\frac{p_1+1}{2}\ge \lambda.$$
		This completes the whole proof of Theorem \ref{thm-main}.

  This paper characterizes all non-$k$-choosable complete $k$-partite graphs $G$ with $2k+2$ vertices. If the number of vertices of $G$ increases, and the chromatic number remains $k$, then the choice number of $G$ may increase. It was proved in \cite{NWWZ2015} that $k$-chromatic graphs with $n \ge 2k+1$ vertices have choice number at most $\lceil \frac{n+k-1}{3} \rceil$. It would be interesting to characterize graphs for which this upper bound on the choice number is sharp.

		\section*{Acknowledgement}
  We thank the referee for a careful reading of the manuscript and for many valuable comments that improve the presentation of this paper.

	\end{document}